\documentclass[12pt,reqno]{amsart}
\usepackage{mathrsfs}
\usepackage{url}
\usepackage{mathtools}
\usepackage{latexsym,epsfig,amssymb,amsmath,amsthm,color,url,bm}
\usepackage[inline,shortlabels]{enumitem}
\usepackage{hyperref}
\usepackage{amsmath,amsbsy}
\RequirePackage[numbers]{natbib}
\usepackage{mathptmx}
\usepackage[text={16cm,24cm}]{geometry}
\usepackage[capitalize,nameinlink,noabbrev,nosort]{cleveref}

\allowdisplaybreaks 
\numberwithin{equation}{section}
\newtheorem{theorem}{Theorem}[section]
\newtheorem{prop}[theorem]{Proposition}
\newtheorem{lemma}[theorem]{Lemma}
\newtheorem{corollary}[theorem]{Corollary}

\theoremstyle{definition}

\newtheorem{example}[theorem]{Example}

\theoremstyle{definition}

\DeclareMathOperator{\Prob}{\mathbb{P}}
\DeclareMathOperator{\Pos}{Pos}
\newcommand{\pca}{$m$-NED}

\title[An exactly solvable evaporation-deposition PCA]
{An exactly solvable evaporation-deposition PCA with long-distance interactions}
\date{\today}
\author{Arvind Ayyer}
\address{Arvind Ayyer, Indian Institute of Science, Bangalore 560012, Karnataka, India.}
\email{arvind@iisc.ac.in}

\author{Moumanti Podder}
\address{Moumanti Podder, Indian Institute of Science Education and Research (IISER) Pune, Dr.\ Homi Bhabha Road, Pashan, Pune 411008, Maharashtra, India.}
\email{moumanti@iiserpune.ac.in}

\begin{document}

\begin{abstract}
We consider a probabilistic cellular automaton (PCA) of evaporation-deposition on the one-dimensional lattice having $n$ sites with periodic boundary conditions, in which each site, during each epoch, can be in one of two states: $0$ and $1$. 
Fix a positive integer $m\geqslant 2$. There are two types of transitions at each discrete time, which are as follows:  
\begin{enumerate*}
\item the first site in every contiguous block of $m$ $0$s becomes a $1$ with probability $p_1$, and
\item the first site in every contiguous block of $(m-1)$ $0$s followed immediately by a $1$ also becomes a $1$ with probability $(1-p_2)$. 
\end{enumerate*}
As in a PCA, all of these transitions occur simultaneously. We show that the resulting discrete-time Markov chain is ergodic, and we give an explicit formula for its limiting distribution, the partition function and the density. We also propose necessary and sufficient conditions for this Markov chain to be reversible. 
For $m=2$, we provide a fully analytical expression for the free energy of this model.
\end{abstract}

\subjclass[2020]{60J10, 60K35, 82C23, 05A10, 05A15}

\keywords{probabilistic cellular automata, evaporation-deposition model, stationary distribution, partition function, density, free energy}

\maketitle

\section{Introduction}\label{sec:intro}

Probabilistic Cellular Automata (PCAs) may be interpreted as discrete-time Markov chains that are generalizations of Cellular Automata (CAs), obtained by incorporating \emph{random perturbations} or \emph{random noise} into CAs (see, for instance, \cite{marcovici-sablik-taati-2019}). 
PCAs appear in many different contexts, but our interest in studying them arises from the intimate connection they have with \emph{Gibbs potentials} and \emph{Gibbs measures} in statistical mechanics~\cite{Gibbs, stat_mech_PCA, stat_mech_PCA_2, spin_models_PCA, mean_field_PCA}, and with combinatorial models such as \emph{directed animals}~\cite{dhar-1983, bousquetmelou-1998, leborgne-marckert-2007, albenque-2009}. We refer the reader to \cite{PCA_survey_old}, and the more recent \cite{louis_nardi} and \cite{mairesse-marcovici-2014}, for a detailed survey on how the theory of PCAs has developed over the years.

One of the primary motivations propelling the current work stems from the study of \emph{directed animals} in combinatorics, and the study of \emph{crystal growth models} and \emph{random gas models} in statistical mechanics, through the lens of probabilistic cellular automata. 
Dhar~\cite{dhar-1982,dhar-1983} studied the problem of enumeration of directed animals by analyzing a certain PCA on $n$ sites with periodic 
boundary conditions depending on a neighbourhood of size $2$.  
If $2$ contiguous sites are empty, then a particle deposits on the leftmost site with probability $p$. 
If at least one of $2$ contiguous sites is occupied at a given time, the leftmost one becomes empty at the next time with probability $1$.

In this work, we introduce a new PCA we call the \emph{$m$-neighbourhood 
evaporation-deposition model} in which we allow a relaxation of the hard constraint above. We work on a finite one-dimensional lattice of $n$ sites with periodic 
boundary conditions.
If $m$ contiguous sites are empty, then a particle deposits on the leftmost site with probability $p_1$. Moreover, if $m-1$ contiguous sites are empty and the $m$'th is occupied, a particle deposits on the leftmost site with probability 
$1 - p_2$. All particles evaporate with probability $1$ at the next time step. 

Closely reminiscent but distinct from our model are the more standard
evaporation-deposition models with $k$-mers~\cite{barma-grynberg-stinchcombe-1993,stinchcombe-barma-grynberg-1993,dhar-menon-1995}.
We also note the resemblance of our model with \emph{adsorption-desorption} models such as the Ziff--Gulari--Barshad model~\cite{ziff1986kinetic} and the Dickman--Burschka model~\cite{dickman-burschka-1988}.

The plan of our paper is as follows. We first define the model precisely in \cref{sec:model}. We state our main results in \cref{sec:main_results}, which include a product formula for the stationary distribution in \cref{thm:main_1},
a formula for the normalizing constant (also known as the partition function) in
\cref{thm:main_2}, a formula for the density in \cref{thm:limiting_prob_cell_occupied_by_1}, and explicit expressions for the partition function, the free energy and the density, when $m=2$, in \cref{thm:main_3}, \cref{thm:free energy m=2} and \cref{thm:density m=2} respectively.

\section{Description of the model}\label{sec:model}
We first describe the $m$-neighbourhood evaporation-deposition model, or \pca{} for short, precisely. We let $\mathbb{N}$ denote the set of all positive integers, and $\mathbb{N}_{0}$ the set of all non-negative integers. We let $[n]$ denote the set $\{1,2,\ldots,n\}$ for each $n \in \mathbb{N}$, and $[0]$ equals the empty set.

In general a \emph{probabilistic cellular automaton} (PCA) consists of: 
\begin{enumerate}
\item the \emph{universe}, each of whose elements is referred to as a \emph{cell} or a \emph{site},
\item the \emph{alphabet}, 
\item the \emph{neighbourhood-marking set},
\item the \emph{state space}, each of whose elements is referred to as a \emph{configuration},
\item and certain \emph{local, stochastic update rules}.
\end{enumerate} 
In our setup of the \pca{} model, the universe is $[n]$, 
the alphabet is $\mathcal{A}=\{0,1\}$,
the neighbour\-hood-marking set is $\mathcal{N}=\{0,1,\ldots,m-1\}$, where $m \leqslant n$,
the state space is $\Omega=\mathcal{A}^{[n]}$,
and the local, stochastic update rules are described later in \eqref{rule_1}, \eqref{rule_2} and \eqref{rule_3}. 

The sites are arranged in a ring geometry and therefore sites $n$ and $1$ are adjacent.
An important abuse of notation that happens throughout this paper, and that is to be borne in mind, is as follows: fixing $n\in\mathbb{N}$ and $i,j\in[n]$, whenever we refer to the cell indexed by $(i+j)$, we actually mean the cell indexed by $(i+j)\bmod n$, i.e.\ by the remainder (that belongs to the set $[n]$) left when $(i+j)$ is divided by $n$.
Henceforth, we let $a^{\ell}$, for any $a \in \mathcal{A}$ and $\ell \in \mathbb{N}$, indicate a subsequence of length $\ell$, each of whose terms equals the symbol $a$. If $\ell=1$, we simply write $a$, instead of $a^{1}$. For $a_{1}, a_{2}, \ldots, a_{r} \in \mathcal{A}$, and $\ell_{1}, \ell_{2}, \ldots, \ell_{r} \in \mathbb{N}$, we let $a_{1}^{\ell_{1}}a_{2}^{\ell_{2}} \ldots a_{r}^{\ell_{r}}$ indicate a subsequence in which each of the first $\ell_{1}$ terms equals $a_{1}$, each of the next $\ell_{2}$ terms equals $a_{2}$, and so on. 
For instance, the notation $01^{2}0^{2}1$ indicates the subsequence $(0,1,1,0,0,1)$. 

The \pca{}, along with an \emph{initial configuration} $\eta(0)$ (which may or may not be randomly chosen from the state space $\Omega$), generates a discrete-time-indexed stochastic process $\{\eta(t)\}_{t \in \mathbb{N}_{0}}$, with $\eta(t)=(\eta_{i}(t): i \in [n])$, where $\eta_{i}(t)$ indicates the symbol, from $\mathcal{A}$, that occupies the cell $i$ during epoch $t$. We refer to $\eta_{i}(t)$ as the \emph{state} of the cell $i$ during epoch $t$. Conditioned on $\eta(t)$, the state of $i$ is updated to $\eta_{i}(t+1)$ during epoch $(t+1)$, independent of all other cells in $[n]$, as follows (the `R' in the itemization below stands for `rule', as these provide the stochastic update rules for the \pca{}):
\begin{enumerate}[label=(R\arabic*), ref=R\arabic*]
\item \label{rule_1} If $\big(\eta_{i}(t),\eta_{i+1}(t),\ldots,\eta_{i+m-1}(t)\big)=0^{m}$, we set
\[
 \eta_{i}(t+1) = 
  \begin{cases} 
   0 & \text{with probability } (1-p_1), \\
   1 & \text{with probability } p_1.
  \end{cases}
\]
\item \label{rule_2} If $\big(\eta_{i}(t),\eta_{i+1}(t),\ldots,\eta_{i+m-1}(t)\big)=0^{m-1}1$, we set
\[
 \eta_{i}(t+1) = 
  \begin{cases} 
   0 & \text{with probability } p_2, \\
   1 & \text{with probability } (1-p_2).
  \end{cases}
\]
\item \label{rule_3} In all other cases, i.e.\ when $\big(\eta_{i}(t),\eta_{i+1}(t),\ldots,\eta_{i+m-1}(t)\big)\in \mathcal{A}^{m}\setminus \{0^{m},0^{m-1}1\}$, we set $\eta_{i}(t+1)=0$ with probability $1$.
\end{enumerate}

If we set $m = 2$, $p_2 = 1$ in our model and change the state space to 
$\mathbb{Z}$, we get the well-known \emph{directed animals PCA}~\cite [Figure 7]{mairesse-marcovici-2014}.

In particular, note that, for any cell $j \in [n]$, the probability of the event that $\eta_{j}(t+1)=1$, conditioned on $\eta(t)$, is strictly positive only if $\eta_{j}(t)=\eta_{j+1}(t)=\cdots=\eta_{j+m-2}(t)=0$. In other words:
\begin{equation}\label{necessary_cond_for_1}
\Prob\left[\eta_{j}(t+1)=1\big|\eta(t)\right]>0 \implies \eta_{i}(t)=0 \text{ for each } i \in \{j,j+1,\ldots,j+m-2\}.
\end{equation}
In fact, if $p_2=1$ (which, as we shall see, is permitted since in our main results, \cref{thm:main_1} and \cref{thm:main_2}, we have assumed $p_2 \in (0,1]$), we can further assert that for the event $\eta_{j}(t+1)=1$ to take place with positive probability, for any cell $j \in [n]$, we must have $\eta_{i}(t)=0$ for each $i \in \{j,j+1,\ldots,j+m-1\}$. However, in our analysis in this paper, we do not need to consider this special case separately.

A pictorial representation of these stochastic update rules is captured in \cref{fig:rules}.

\begin{figure}[h!]
  \centering
\fbox{    \includegraphics[width=0.8\textwidth]{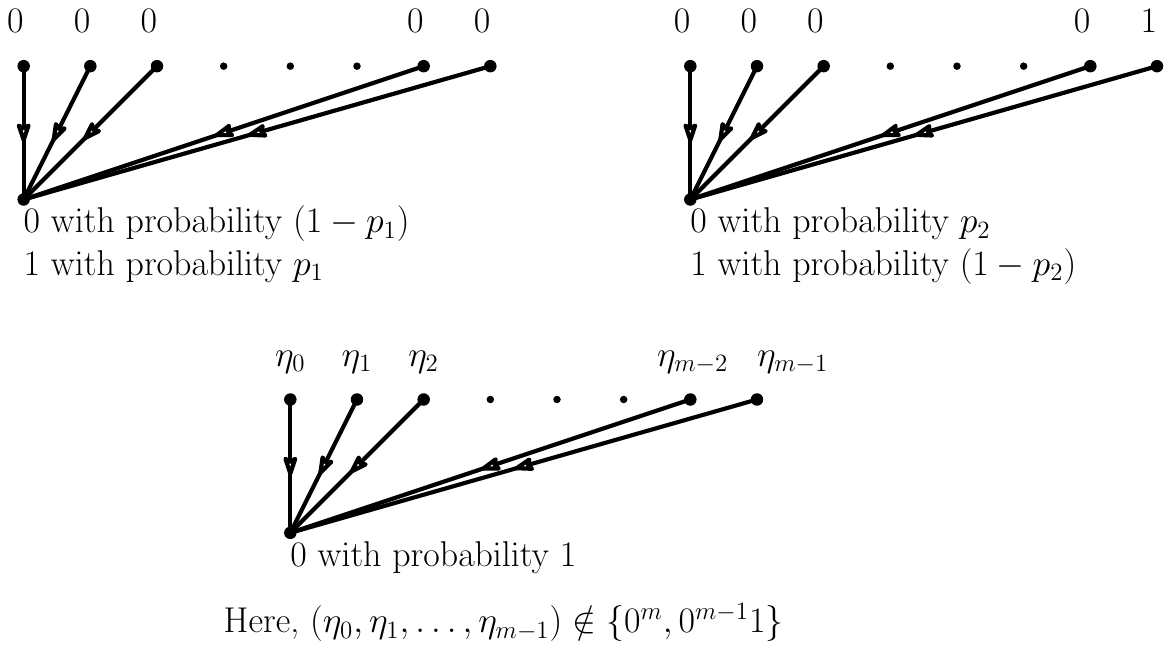}}
\caption{A pictorial representation of the stochastic update rules of the \pca{}.}
  \label{fig:rules}
\end{figure}

The transition graphs for the Markov chains resulting from the $m$-NED model with $n = 3$ and both $m = 2$ and $m = 3$ have been shown in \cref{fig:transition graph eg}.

\begin{figure}[h!]
\centering
\includegraphics[scale=0.5]{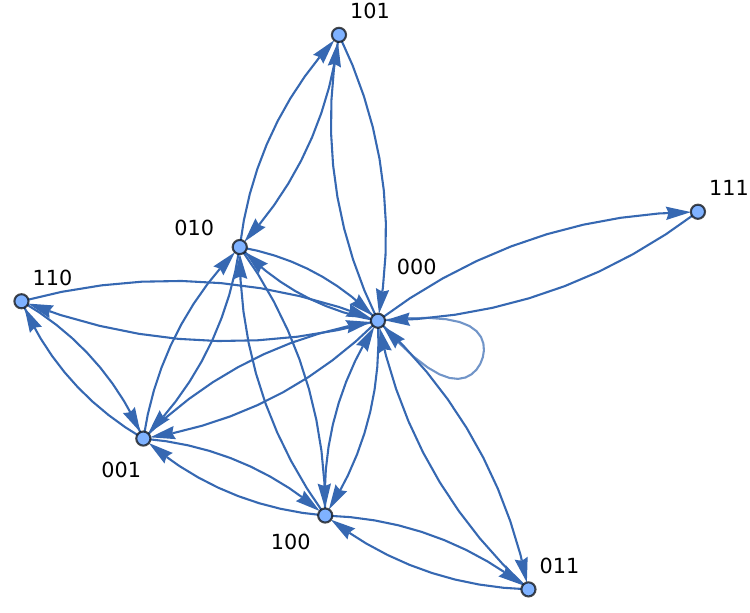}
\includegraphics[scale=0.5]{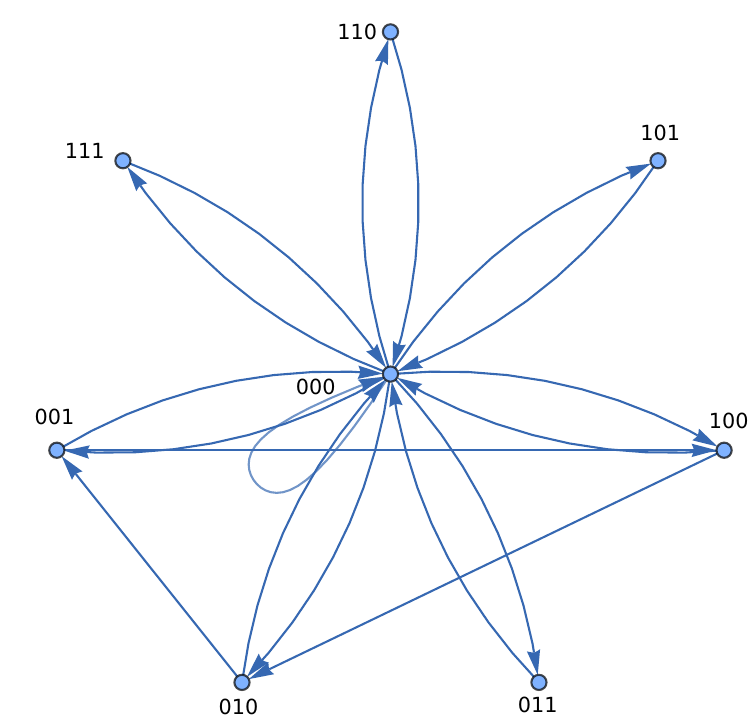}
\caption{All transitions for the \pca{} with $n = 3$ and $m = 2$ (resp. $m = 3$) on the left (resp. right). The probabilities of the transitions have not been written down to avoid cluttering the figure.}
\label{fig:transition graph eg}
\end{figure}

If $\eta(0)$ is random, its law, say $\nu_{0}$, is referred to as the \emph{initial distribution} (if $\eta(0)$ is deterministic, $\nu_{0}$ is simply the Dirac probability measure $\delta_{\eta(0)}$). Evidently, the stochastic process $\{\eta(t)\}_{t \in \mathbb{N}_{0}}$ is a Markov chain on a finite state space, and the objective of this paper is to explore the \emph{limiting distribution} (which, we show, exists and is unique) of this Markov chain. In other words, letting $\nu_{t}$ denote the law of $\eta(t)$, we establish the existence of, and provide a complete description of, the unique probability distribution $\pi$ such that $\lim_{t \rightarrow \infty}\nu_{t}=\pi$ irrespective of what $\nu_{0}$ is.

\section{Main results}
\label{sec:main_results}

As mentioned above, our main results concern themselves with the existence and complete characterization of the limiting or stationary distribution $\pi$ for the 
\pca{}.
Henceforth, for configurations $\alpha, \beta \in \Omega$, we denote by $\Prob[\alpha \rightarrow \beta]$ the transition probability $\Prob[\eta(t+1)=\beta\big|\eta(t)=\alpha]$, for \emph{any} $t \in \mathbb{N}_{0}$ (since $\{\eta(t)\}_{t \in \mathbb{N}_{0}}$ is a time-homogeneous Markov chain). 
Recall that the limiting distribution satisfies the \emph{balance equation} or 
\emph{master equation},
\begin{equation}
\sum_{\alpha\in\Omega}\Prob[\alpha \rightarrow \beta] \pi(\alpha)=\pi(\beta) \quad\text{for each } \beta \in \Omega.\label{gen_identity}
\end{equation}

Before we describe the stationary distribution, we state an elementary property of the \pca{}.

\begin{prop}
For any $n \geqslant m \geqslant 2$ and $p_1, p_2 \in (0, 1)$, the \pca{} is translation-invariant.
\end{prop}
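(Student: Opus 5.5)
We make ``translation-invariant'' precise as follows. Let $\tau:\Omega\to\Omega$ be the cyclic shift $(\tau\alpha)_i=\alpha_{i+1}$, with indices read modulo $n$ as per the paper's convention. The claim to prove is
\[
\Prob[\tau\alpha\rightarrow\tau\beta]=\Prob[\alpha\rightarrow\beta]\qquad\text{for all }\alpha,\beta\in\Omega .
\]
The plan is to compute both sides via the product structure of the PCA transition kernel and compare them factor by factor. A consequence worth recording is that if $\pi$ is the (unique, once ergodicity is shown) stationary distribution, then $\pi\circ\tau$ satisfies \eqref{gen_identity} as well. Uniqueness then forces $\pi(\tau\alpha)=\pi(\alpha)$. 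Uniqueness is not needed for the statement itself, however.

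First, since conditionally on $\eta(t)$ the cells update independently, we write
\[
\Prob[\alpha\rightarrow\beta]=\prod_{i\in[n]} f\big(\alpha_i,\alpha_{i+1},\ldots,\alpha_{i+m-1};\beta_i\big).
\]
Here $f(w;b)$ is the local rule read off from \eqref{rule_1}--\eqref{rule_3}. For instance, $f(0^m;1)=p_1$ and $f(0^{m-1}1;0)=p_2$, while for any other word $w$ we have $f(w;0)=1$ and $f(w;1)=0$. The point is that $f$ depends only on the word in the window and the new symbol, and not on the position $i$. This holds because the rules are stated uniformly in $i$, with indices taken cyclically.

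Second, we substitute $\tau\alpha,\tau\beta$. The $i$-th factor becomes $f(\alpha_{i+1},\ldots,\alpha_{i+m};\beta_{i+1})$, which is exactly the $(i+1)$-th factor of the original product. Since $i\mapsto i+1$ is a bijection of $[n]$ modulo $n$, the two products contain the same factors, only reindexed, and so they are equal. Iterating gives the same identity for $\tau^k$ for every $k$.

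There is no real obstacle. The only point requiring care is the bookkeeping with the cyclic indices, in particular that windows wrapping around site $n$ to site $1$ are handled by the same rule. This is exactly why the periodic boundary condition, together with the hypothesis $m\leqslant n$ (so that a window never overlaps itself in a problematic way), is what makes the kernel commute with $\tau$. The restriction $p_1,p_2\in(0,1)$ plays no role in this argument.
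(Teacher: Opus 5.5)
Your proof is correct and takes the same approach as the paper, which simply observes that the rules \eqref{rule_1}--\eqref{rule_3} do not depend on the site; you make the product form of the kernel and the reindexing $i\mapsto i+1$ explicit. One small correction to your closing remark: the hypothesis $m\leqslant n$ plays no role either, since windows read cyclically modulo $n$ commute with $\tau$ for any $m$.
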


\begin{proof}
This is an immediate consequence of the fact that the transition probabilities in \eqref{rule_1}, \eqref{rule_2} and \eqref{rule_3} do not depend on the sites.
\end{proof}

As a consequence, we obtain the following.

\begin{corollary}
\label{cor:trans inv}
The stationary probability of $\beta = (\beta_1, \dots, \beta_n)$ satisfies the identity, $\pi(\beta) = \pi(\beta_2, \beta_3, \allowbreak \dots, \beta_n, \beta_1)$.
\end{corollary}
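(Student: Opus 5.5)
We will derive the identity from the translation invariance in the preceding proposition, combined with uniqueness of the stationary distribution. Let $\sigma:\Omega\to\Omega$ be the cyclic shift $\sigma(\beta_1,\dots,\beta_n)=(\beta_2,\dots,\beta_n,\beta_1)$. This is a bijection of $\Omega$.

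The first step is to make the translation invariance precise in the form
\[
\Prob[\sigma\alpha\rightarrow\sigma\beta]=\Prob[\alpha\rightarrow\beta]\quad\text{for all }\alpha,\beta\in\Omega.
\]
By the independence of the cell updates, $\Prob[\alpha\rightarrow\beta]=\prod_{i\in[n]}q\big(\alpha_i,\dots,\alpha_{i+m-1};\beta_i\big)$. Here $q$ is the site-independent local kernel given by \eqref{rule_1}, \eqref{rule_2} and \eqref{rule_3}, with indices taken mod $n$. Writing $\alpha'=\sigma\alpha$ and $\beta'=\sigma\beta$, we have $\alpha'_i=\alpha_{i+1}$ and $\beta'_i=\beta_{i+1}$. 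So the factor of $\Prob[\sigma\alpha\rightarrow\sigma\beta]$ at site $i$ equals the factor of $\Prob[\alpha\rightarrow\beta]$ at site $i+1$. Reindexing the product over the ring gives the identity.

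The second step is to define $\pi'(\beta)=\pi(\sigma\beta)$ and check that it is stationary. Using the bijection $\alpha\mapsto\sigma\alpha$ to reindex the sum, and then stationarity of $\pi$ applied at the configuration $\sigma\beta$, we get
\[
\sum_{\alpha}\Prob[\alpha\rightarrow\beta]\pi'(\alpha)=\sum_{\alpha}\Prob[\sigma\alpha\rightarrow\sigma\beta]\pi(\sigma\alpha)=\sum_{\gamma}\Prob[\gamma\rightarrow\sigma\beta]\pi(\gamma)=\pi(\sigma\beta)=\pi'(\beta).
\]
So $\pi'$ satisfies \eqref{gen_identity}. It is clearly also a probability distribution.

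The final step, and the only real obstacle, is uniqueness of the stationary distribution. Once uniqueness holds, $\pi'=\pi$, which is exactly the claim. The paper asserts that the limit distribution exists and is unique, with ergodicity proved later. To keep the argument self-contained I would sketch why the chain is irreducible and aperiodic.

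For aperiodicity, take $p_1<1$ (and $p_2>0$). Every site then becomes $0$ with positive probability: under \eqref{rule_3} this happens surely, under \eqref{rule_1} with probability $1-p_1$, and under \eqref{rule_2} with probability $p_2$. Hence every state moves to $0^n$ in one step with positive probability, and $0^n\to0^n$ has positive probability, which gives aperiodicity.

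For uniqueness we then only need that $0^n$ is reachable from every state. This holds by the step just described, so the chain has a single recurrent class containing $0^n$. A finite chain whose recurrent states form one class has a unique stationary distribution, which suffices even without full irreducibility.

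If one prefers to avoid this, note that the corollary's statement presumes a well-defined $\pi$. In that case we can simply cite the uniqueness asserted in \cref{sec:model}.
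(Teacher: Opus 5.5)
Your proof is correct and follows the paper's own route. The paper states this corollary as an immediate consequence of the translation invariance of the update rules: shift-equivariance of the kernel makes the shifted measure stationary, and uniqueness of the stationary distribution (proved in \cref{lem:irreducible_aperiodic}) forces it to equal $\pi$. Your uniqueness step uses only that $0^{n}$ is reachable in one step from every state, which is a valid and slightly more economical substitute for the paper's irreducibility lemma.
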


Given any configuration $\beta=(\beta_{1},\beta_{2},\ldots,\beta_{n}) \in \Omega$, we introduce the following notation (the `N' in the itemization below stands for `notation'):
\begin{enumerate}[label=(N\arabic*), ref=N\arabic*]
\item \label{notation:1} $N_{1}(\beta)$ denotes the number of occurrences of the symbol $1$ in $\beta$, i.e.\ 
\begin{equation}
N_{1}(\beta)=\sum_{i \in [n]} \chi\left[\beta_{i}=1\right];\nonumber
\end{equation}
where the notation $\chi[A]$, given any event $A$, denotes the indicator for the event $A$;
\item \label{notation:10^{r}1} $N_{10^{r}1}(\beta)$, for any $r\in\mathbb{N}$, denotes the number of occurrences of the subsequence $10^{r}1$ in $\beta$, i.e.\
\begin{equation}
N_{10^{r}1}(\beta)=\sum_{i \in [n]} \chi\left[(\beta_{i},\beta_{i+1},\ldots, \beta_{i+r+1})=10^{r}1\right];\nonumber
\end{equation}
\item \label{notation:0^{r}1} $N_{0^{r}1}(\beta)$, for any $r\in\mathbb{N}$, denotes the number of occurrences of the subsequence $0^{r}1$ in $\beta$, i.e.\
\begin{equation}
N_{0^{r}1}(\beta)=\sum_{i \in [n]} \chi\left[(\beta_{i},\beta_{i+1},\ldots,\beta_{i+r})=0^{r}1\right].\nonumber
\end{equation}
\end{enumerate}

\begin{theorem}\label{thm:main_1}
As long as $p_1 \in (0,1)$ and $p_2 > 0$, there exists a unique probability distribution $\pi$, supported on the state space $\Omega$, such that the sequence $\{\nu_{t}\}_{t \in \mathbb{N}_{0}}$ of probability distributions converges to $\pi$ as $t \rightarrow \infty$, irrespective of what the initial distribution $\nu_{0}$ is. For each configuration $\beta \in \Omega$, we have
\begin{equation}
\pi(\beta)=\frac{1}{Z_{n,m}}p_1^{N_{1}(\beta)}(1-p_1)^{\sum_{r\in[m-2]}r N_{10^{r}1}(\beta)+(m-1)N_{0^{m-1}1}(\beta)}p_2^{-N_{0^{m-1}1}(\beta)},\label{limiting_distribution}
\end{equation}
where $Z_{n,m} \equiv Z_{n,m}(p_1,p_2)$ is the normalizing constant. 
\end{theorem}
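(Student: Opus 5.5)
The proof has two parts: existence, uniqueness and convergence, which come from ergodicity of a finite chain, and identification of $\pi$, which comes from checking the balance equation \eqref{gen_identity} for the product weight in \eqref{limiting_distribution}.

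\emph{Ergodicity.} Let $\alpha\in\Omega$ be arbitrary and look at the transition $\alpha\to 0^n$. Each site becomes $0$ with probability at least $\min(1-p_1,p_2,1)$, since \eqref{rule_1} gives $1-p_1$, \eqref{rule_2} gives $p_2$ and \eqref{rule_3} gives $1$. This is positive when $p_1<1$ and $p_2>0$. Hence $\Prob[\alpha\to 0^n]>0$ for every $\alpha$.

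Conversely, in $0^n$ every window equals $0^m$, so by \eqref{rule_1} each site independently becomes $1$ with probability $p_1\in(0,1)$. Hence $\Prob[0^n\to\beta]=p_1^{N_1(\beta)}(1-p_1)^{n-N_1(\beta)}>0$ for every $\beta$. The chain is therefore irreducible, and the self-loop at $0^n$ makes it aperiodic. Since $\Omega$ is finite, there is a unique stationary $\pi$ with $\nu_t\to\pi$ for every $\nu_0$. It remains to show that this $\pi$ is given by \eqref{limiting_distribution}.

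\emph{Rewriting the weight.} I would encode $\beta$ (when $N_1(\beta)\ge1$) by the cyclic sequence of gaps $g_1,\dots,g_k$, where $k=N_1(\beta)$ and $g_j\ge 0$ is the number of $0$s between the $j$-th $1$ and the next $1$. Define
\[
f(0)=1,\qquad f(g)=(1-p_1)^g \ \text{ for } 1\le g\le m-2,\qquad f(g)=(1-p_1)^{m-1}p_2^{-1}\ \text{ for } g\ge m-1 .
\]
Then the unnormalised weight is $w(\beta)=p_1^{k}\prod_j f(g_j)$, and $w(0^n)=1$. I then need to verify $\sum_\alpha w(\alpha)\Prob[\alpha\to\beta]=w(\beta)$ for every $\beta$; since $\pi$ is unique, this suffices.

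\emph{The transition probability.} The factor $\Prob[\alpha\to\beta]$ is a product over sites $i$ of local factors depending on $\beta_i$ and the window $(\alpha_i,\dots,\alpha_{i+m-1})$. By \eqref{necessary_cond_for_1}, a $1$ at site $i$ of $\beta$ forces $\alpha_i=\dots=\alpha_{i+m-2}=0$. The site $\alpha_{i+m-1}$ then only decides between $p_1$ and $1-p_2$. The sites of $\alpha$ not covered by these forced zero-windows are free.

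\emph{Summing over $\alpha$.} I would perform the sum with a transfer-matrix argument around the ring, reading $\alpha$ together with $\beta$ left to right. Both $w(\alpha)$ and $\Prob[\alpha\to\beta]$ are products of local terms once $\alpha$ is also written in gap form, because the exponents in \eqref{limiting_distribution} count the patterns $10^r1$ and $0^{m-1}1$. The goal is to show that the sum splits over the gaps of $\beta$. Concretely, I would aim for a local identity: for a stretch of $\beta$ of the form $10^{g}$ followed by the next $1$, the sum over the compatible portion of $\alpha$ of the product of $w$-factors and transition factors equals $p_1 f(g)$ times a boundary term that telescopes around the ring. The case $\beta=0^n$ would be handled separately as a check of the normalisation mass.

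\emph{Main obstacle.} The hardest step is this summation. The free sites of $\alpha$ interact with the transition factors at $\beta$-zeros through windows of length $m$, and a $1$ in $\alpha$ affects the $m$ windows preceding it. I would first try to prove the identity by summing over the position of the first $1$ of $\alpha$ inside each free stretch, using the geometric sums produced by $(1-p_1)$ and the $p_2$ factors from $0^{m-1}1$ windows. If the blocks do not decouple cleanly, I would fall back on defining an explicit $m$-state transfer matrix, whose state records the distance to the last $1$ of $\alpha$ capped at $m-1$, and verifying that $w$ is its left eigenvector with eigenvalue $1$.
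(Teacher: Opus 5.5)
Your ergodicity argument and the gap-form rewriting $w(\beta)=p_1^{k}\prod_j f(g_j)$ are correct and agree with the paper. The gap is the step you yourself flag as the main obstacle. The identity $\sum_{\alpha}w(\alpha)\Prob[\alpha\to\beta]=w(\beta)$ is the entire content of the theorem beyond ergodicity, and your proposal never establishes it. You posit a per-gap identity of the form ``$p_1f(g)$ times a boundary term that telescopes'', but you do not say what the boundary term is. In your formulation the obstruction is real. The $w$-factor of the first $1$ of $\alpha$ in a free stretch depends on a zero-run of $\alpha$ that extends back through the forced zeros into the previous stretch. So $w(\alpha)$ by itself does not split along the gaps of $\beta$. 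The fallback is not well posed either. $\sum_\alpha w(\alpha)\Prob[\alpha\to\beta]$ is the trace of a product of $\beta$-dependent matrices around the ring, and what must be shown is that this trace equals $w(\beta)$. Since $w$ is a vector indexed by $\Omega$, it is not a left eigenvector of any single $m$-state matrix.

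The missing idea is that $w(\alpha)\Prob[\alpha\to\beta]$ collapses \emph{before} any summation. Take $\alpha\in\mathcal{S}_\beta$ and a $1$ of $\alpha$ preceded by a run of $g$ zeros.
In that run:
\begin{enumerate}
\item the first $(g-m+1)_+$ sites have window $0^m$;
\item the site $m-1$ places before the $1$ has window $0^{m-1}1$ (when $g\geqslant m-1$);
\item every other site of the run, and the $1$ itself, has factor $1$ by \eqref{rule_3}.
\end{enumerate}
If $\beta$ vanished on the run, the run would contribute $(1-p_1)^{g-m+1}p_2$ when $g\geqslant m-1$ and $1$ otherwise. Multiplying by $p_1f(g)$ gives $p_1(1-p_1)^{g}$ in every case: the $p_2^{-1}$ and the pattern-dependent powers of $1-p_1$ in $w$ cancel exactly. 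Now correct for the $1$s of $\beta$, with ratio $p_1/(1-p_1)$ at a $0^m$-window and $(1-p_2)/p_2$ at a $0^{m-1}1$-window. This gives (trivially also for $\alpha=0^n$)
\[
w(\alpha)\Prob[\alpha\to\beta]=(1-p_1)^{n}\left(\frac{p_1}{1-p_1}\right)^{N_1(\alpha)+N_1(\beta)}\left(\frac{(1-p_1)(1-p_2)}{p_1p_2}\right)^{b_\alpha},
\]
where $b_\alpha$ is the number of blocks $1^{k_i}0^{\ell_i}$ of $\beta$ with $\ell_i\geqslant m-1$ and $\alpha_{s_{i-1}+k_i+m-1}=1$. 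This is the pointwise content of \cref{lem:gen identity}.

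The right side is a product over the free sites of $\alpha$, so the sum is a product of binomial factors. Each of the $\ell_i-m+1$ ordinary free sites of a block gives $1/(1-p_1)$, and its special site $s_{i-1}+k_i+m-1$ gives $1+\frac{1-p_2}{p_2}=p_2^{-1}$. Each block then contributes $p_1^{k_i}(1-p_1)^{\ell_i}$ if $\ell_i\leqslant m-2$ and $p_1^{k_i}(1-p_1)^{m-1}p_2^{-1}$ otherwise. That is exactly $p_1^{k_i}f(\ell_i)$, and the product over blocks is $w(\beta)$. So the sum does split over the gaps of $\beta$ as you hoped, with a trivial boundary term. But it does so only through this cancellation, which your proposal does not contain.
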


The proof of \cref{thm:main_1} forms the bulk of the paper and is proved in \cref{sec:proof_main_1}.

\begin{example}
\label{eg:m2n3}
For the example of $m = 2$ and $n = 3$ shown in \cref{fig:transition graph eg},
the stationary distribution written as a vector, with the configurations ordered lexicographically, is
\[
\frac{1}{Z_{3,2}} 
\left(1, \frac{(1 - p_1) p_1}{p_2}, \frac{(1 - p_1) p_1}{p_2}, \frac{(1 - p_1) p_1^2}{p_2}, 
\frac{(1 - p_1) p_1}{p_2}, \frac{(1 - p_1) p_1^2}{p_2}, \frac{(1 - p_1) p_1^2}{p_2}, p_1^3 \right),
\]
where
\[
Z_{3,2} = \frac{1}{p_2} (1 + p_1) (3 p_1 - 3 p_1^2 + p_2 - p_1 p_2 + p_1^2 p_2).
\]
\end{example}

Recall that a Markov chain is said to be \textit{reversible} if any stationary distribution satisfies the \emph{detailed balance} equation,
\begin{equation}
\label{detailed balance}
\pi(\alpha) \Prob[\alpha \to \beta] = \pi(\beta) \Prob[\beta \to \alpha]
\quad
\text{for all $\alpha, \beta \in \Omega$}.
\end{equation}

\begin{corollary}
\label{cor:detailed m>2}
The \pca{}
is irreversible for any $n \geqslant m \geqslant 3$ and $p_1, p_2 \in (0, 1)$. 
\end{corollary}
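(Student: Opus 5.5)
The plan is to use \cref{thm:main_1} and exhibit one explicit violation of \eqref{detailed balance}. For $p_1,p_2\in(0,1)$, formula \eqref{limiting_distribution} gives $\pi(\gamma)>0$ for every $\gamma\in\Omega$, since every factor is strictly positive. So it suffices to find two configurations $\alpha,\beta$ with $\Prob[\alpha\to\beta]>0$ but $\Prob[\beta\to\alpha]=0$: then the left side of \eqref{detailed balance} is positive and the right side is zero.

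The key observation comes from \eqref{necessary_cond_for_1}. If $\beta=110^{n-2}$ and $m\geqslant 3$, then under \eqref{rule_3} sites $1$ and $n$ of $\beta$ are forced to become $0$. Site $1$ has a window starting with $1$, so it is not of the form $0^m$ or $0^{m-1}1$. Site $n$ has window $(0,1,1,\ldots)$, and since $m\geqslant 3$ the two $1$s sit in positions $2$ and $3$ of the window, which again rules out both $0^m$ and $0^{m-1}1$. Hence from $\beta$ the chain can never reach a configuration with a $1$ at site $1$ or at site $n$. This is precisely where $m\geqslant 3$ is used: for $m=2$ the window of site $n$ would be $01$, which is allowed.

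For the forward transition I take $\alpha=0^{n-1}1$, which has a $1$ at site $n$, so $\Prob[\beta\to\alpha]=0$. I then check $\Prob[\alpha\to\beta]>0$ when $n\geqslant m+2$, using independence of the site updates:
\begin{itemize}
\item Sites $1$ and $2$ of $\alpha$ have windows $0^m$, since sites $1,\ldots,m+1$ are all $0$. By \eqref{rule_1} each becomes $1$ with probability $p_1>0$.
\item Every other site must become $0$. Under \eqref{rule_1} this happens with probability $1-p_1>0$, under \eqref{rule_2} with probability $p_2>0$, and under \eqref{rule_3} with probability $1$.
\end{itemize}
The product of these factors is positive, which gives the violation for $n\geqslant m+2$.

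The main obstacle is the small cases $n\in\{m,m+1\}$, where the $1$ at site $n$ of $\alpha$ wraps around into the windows of sites $1$ or $2$. My first step there is to vary the pair while keeping the same mechanism. I keep some translate of $110^{n-2}$ as $\beta$, use \cref{cor:trans inv} to shift freely, and look for an $\alpha$ with a $1$ at one of the two forced-zero sites of $\beta$ from which the two adjacent $1$s can still be created. If that fails, I will check these finitely many ring sizes directly, for example comparing $\pi(\alpha)\Prob[\alpha\to\beta]$ with $\pi(\beta)\Prob[\beta\to\alpha]$ for $\alpha=0^n$ and a configuration with a single $1$, using the explicit weights from \eqref{limiting_distribution}.
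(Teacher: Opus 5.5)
\textbf{There is a genuine gap at $n=m$.} Your argument is correct for $n\geqslant m+2$. It also covers $n=m+1$ without change, although you did not notice this. There, site $2$ of $\alpha=0^{n-1}1$ has window $0^{m-1}1$, so by \eqref{rule_2} it becomes $1$ with probability $1-p_2>0$; the wrap-around is harmless.

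At $n=m$, site $2$ of $\alpha$ has window $0^{m-2}10$. It is therefore forced to $0$ by \eqref{rule_3}, so $\Prob[\alpha\to\beta]=0$. Neither fallback closes this case.
\begin{itemize}
\item If you keep $\beta$ a translate of $110^{n-2}$: for $n=m$, site $1$ can become $1$ only if $\alpha_1=\cdots=\alpha_{n-1}=0$, and site $2$ only if $\alpha_2=\cdots=\alpha_n=0$. So $0^n$ is the only configuration leading to $\beta$. Conversely, from $\beta$ every window is the whole ring and contains both $1$s, so $\beta\to 0^n$ with probability $1$. Hence no one-way pair involving $\beta$ exists.
\item The second fallback fails for every $n$. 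Since every window of $0^n$ equals $0^m$, we have $\pi(0^n)\Prob[0^n\to\gamma]=p_1^{N_1(\gamma)}(1-p_1)^{n-N_1(\gamma)}/Z_{n,m}$. This is exactly $\pi(\gamma)\Prob[\gamma\to 0^n]$, as computed summand by summand in \cref{subsec:beta_all_0}. So detailed balance holds for every pair involving $0^n$.
\end{itemize}
Note also that $n\in\{m,m+1\}$ with $m\geqslant 3$ arbitrary is an infinite family, not finitely many ring sizes. A direct case check is therefore not available in any event.

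\textbf{The fix} is to put a single $1$ in $\beta$ instead of two, which is what the paper does: take $\alpha=0^{n-2}10$ and $\beta=0^{n-1}1$. Your blocking mechanism gives $\Prob[\beta\to\alpha]=0$: site $n-1$ of $\beta$ has a window beginning with $01$, which is forbidden when $m\geqslant 3$. The forward transition $\Prob[\alpha\to\beta]>0$ holds uniformly for all $n\geqslant m$:
\begin{itemize}
\item site $n$ of $\alpha$ has window $0^m$ if $n>m$ and $0^{m-1}1$ if $n=m$, so it becomes $1$ with positive probability;
\item site $n-1$ is a $1$ and goes to $0$ surely;
\item every remaining site goes to $0$ with positive probability.
\end{itemize}
Together with your (correct) observation that $\pi>0$ everywhere, this gives the violation of \eqref{detailed balance} in all cases at once.
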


\begin{proof}
For $n \geqslant m \geqslant 3$, there exist configurations $\alpha, \beta \in \Omega$ such that there is a transition from $\alpha$ to $\beta$, but not the other way around. For example, take $\alpha = 0^{n-2}10$ and $\beta = 0^{n-1}1$.
Therefore, detailed balance cannot hold. 
\end{proof}

The normalizing constant $Z_{n,m} \equiv Z_{n,m}(p_1,p_2)$ is commonly referred to as the \emph{partition function}, and \cref{thm:main_2} is concerned with the description of $Z_{n,m}$. 
Define 
\begin{equation}
\mathcal{T}_{M}=\left\{(x_{1},x_{2},\ldots,x_{m-2})\in\{0,1,\ldots,k\}^{m-2} 
\mid \sum_{s\in[m-2]}s x_{s}=M\right\}, \label{T_{M}}
\end{equation}
and define $\mathcal{C}_{n,k}\subseteq\mathbb{N}_{0}^{2}$ to be the set of all pairs $(M,N)$ such that
\begin{equation}
M\in\{0,1,\ldots,n-k-m+1,n-k\},\quad 0\leqslant N \leqslant \left\lfloor\frac{n-k-M}{m-1}\right\rfloor, \label{M_N_choices}
\end{equation}
with $N=0$ if and only if $M=(n-k)$.
Note that the set $\{0,1,\ldots,n-k-m+1,n-k\}$ simply boils down to the singleton $\{n-k\}$ if $(n-k)\leqslant(m-1)$. 
In what follows, we use the standard convention $\binom{-1 }{ -1}=1$. 

\begin{theorem}\label{thm:main_2}
The partition function $Z_{n,m}$, mentioned in \eqref{limiting_distribution}, is given by
\begin{multline}
Z_{n,m}=1+\sum_{k=1}^{n} \,\sum_{(M,N)\in \mathcal{C}_{n,k}} \;
\sum_{(x_1,\dots,x_{m-2}) \in\mathcal{T}_{M}} \;
\frac{n}{k} 
\binom{k}{x_1,\dots,x_{m-2},N, k - N - x_1 - \cdots - x_{m-2}} \\ 
\times \binom{n-k-M-(m-2)N-1}{ N-1}p_1^{k}(1-p_1)^{M+(m-1)N}p_2^{-N}.
\label{Z_{n,m}_final}
\end{multline}
\end{theorem}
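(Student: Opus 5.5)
The plan is to compute $Z_{n,m}=\sum_{\beta\in\Omega}w(\beta)$ directly, where $w(\beta)$ is the unnormalized weight in \eqref{limiting_distribution}. We group configurations according to the number $k=N_1(\beta)$ of $1$s. The all-zero configuration $\beta=0^n$ has weight $1$, which accounts for the leading term. For $k\geqslant 1$, we read the ring cyclically. Then $\beta$ is determined, up to rotation, by the sequence of gap lengths $(g_1,\dots,g_k)$, where $g_j\geqslant 0$ is the number of $0$s between the $j$-th $1$ and the next $1$, and $\sum_j g_j=n-k$. The weight factorizes over gaps. A gap of length $0$ contributes $1$. A gap of length $r\in[m-2]$ is an occurrence of $10^r1$ and contributes $(1-p_1)^r$. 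A gap of length $g\geqslant m-1$ contains exactly one occurrence of $0^{m-1}1$ and contributes $(1-p_1)^{m-1}p_2^{-1}$. Together with $p_1^k$ this gives the weight. The only subtlety is $k=1$ with gap $n-1$. This is also the reason for the special pair $(M,N)=(n-k,0)$: when all gaps are short, the total $M=\sum_{\text{short}}r$ must equal $n-k$.

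Next we parametrize. Let $x_s$ be the number of gaps of length $s\in[m-2]$, and let $N$ be the number of long gaps (length $\geqslant m-1$). Put $M=\sum_s s x_s$, so that $(x_s)\in\mathcal{T}_M$. The remaining $n-k-M$ zeros are distributed among the $N$ long gaps, each receiving at least $m-1$. If $N\geqslant1$, the number of such distributions is $\binom{n-k-M-(m-1)N+N-1}{N-1}=\binom{n-k-M-(m-2)N-1}{N-1}$. This requires $n-k-M\geqslant (m-1)N$, so $N\leqslant\lfloor (n-k-M)/(m-1)\rfloor$ and $M\leqslant n-k-m+1$. If $N=0$ we need $M=n-k$, and the convention $\binom{-1}{-1}=1$ handles this case. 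This reproduces exactly the index set $\mathcal{C}_{n,k}$. The number of ways to assign the gap types to the $k$ labelled positions is the multinomial $\binom{k}{x_1,\dots,x_{m-2},N,k-N-\sum x_s}$, where the last part counts the zero-length gaps. Every configuration in a class has weight $p_1^k(1-p_1)^{M+(m-1)N}p_2^{-N}$.

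The main obstacle is passing from cyclic gap sequences to configurations on the labelled ring, which produces the factor $n/k$. Consider pairs $(\beta,i)$, where $\beta$ has $k$ ones and $i$ is a position with $\beta_i=1$. There are $k$ times as many such pairs as configurations. Each pair corresponds bijectively to a pair (starting position $i\in[n]$, linear gap sequence $(g_1,\dots,g_k)$ read from $i$). The map is a bijection because $\beta$ is recovered by placing a $1$ at $i$ and following the gaps. There are $n$ choices of $i$, and no issue arises with periodic configurations. Hence
\[
\#\{\beta\text{ in a class}\}=\tfrac{n}{k}\cdot\#\{\text{gap sequences in the class}\},
\]
and the weight is constant on the class. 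Summing over $k$, $(M,N)\in\mathcal{C}_{n,k}$ and $(x_s)\in\mathcal{T}_M$ then gives \eqref{Z_{n,m}_final}. Finally, we should check the degenerate cases: $k=n$, which forces $M=0$ and gives $N=0$, consistent with $M=n-k$; and $k=1$, where the single gap wraps around but still counts exactly one $0^{m-1}1$ (or one $10^{r}1$ pattern $1 0^{n-1}1$ read cyclically).
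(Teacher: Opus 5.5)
Your proposal is correct and takes essentially the paper's route. You group by $k=N_1(\beta)$, sort the $k$ cyclic gaps into empty, short (length $s\in[m-2]$) or long (length $\geqslant m-1$), count typed gap sequences by the multinomial (the paper's product of binomials) times $\binom{n-k-M-(m-2)N-1}{N-1}$, and obtain $n/k$ by rotating a marked $1$. Your double count of pairs $(\beta,i)$ with $\beta_i=1$ is a slightly more careful version of the paper's ``special $1$'' overcounting argument, and it correctly handles periodic configurations.
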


To compare with \cref{eg:m2n3}, we compute $Z_{3,2}$ using \cref{thm:main_2}. 
Note that $m = 2$ forces $M = 0$ for $(M, N) \in \mathcal{C}_{3,k}$ 
for all $k \in [3]$ because of $\mathcal{T}_M$. Thus, the only terms that contribute are for $k =1, N = 1$, $k = 2, N = 1$ and $k = 3,  = 0$.
This gives
\[
Z_{3, 2} = 1 + 3 \frac{p_1 (1 - p_1)}{p_2} + 3 \frac{p_1 (1 - p_1)}{p_2} + p_1^3,
\]
which matches \cref{eg:m2n3} after factoring.

The next result gives a formula for the \textit{density} in the stationarity distribution $\pi$, namely the probability that a particular site, say $1$, is occupied by a particle.

\begin{theorem}\label{thm:limiting_prob_cell_occupied_by_1}
Let $\eta \in \Omega$ be a random configuration with law $\pi$.
Then 
\begin{multline}
\pi(\eta_{1}=1) = \frac{1}{Z_{n,m}}
\sum_{k=1}^{n} \,\sum_{(M,N)\in \mathcal{C}_{n,k}} \;
\sum_{(x_1,\dots,x_{m-2}) \in\mathcal{T}_{M}} \;
\binom{k}{x_1,\dots,x_{m-2}, N, k - N - x_1 - \cdots - x_{m-2}} \\
\times \binom{n-k-M-(m-2)N-1}{N-1}p_1^{k}(1-p_1)^{M+(m-1)N}p_2^{-N}.\label{eq:limiting_prob_cell_occupied_by_1}
\end{multline}
\end{theorem}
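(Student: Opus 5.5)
The plan is to read off the density from the partition function in \cref{thm:main_2}, using translation invariance (\cref{cor:trans inv}) to avoid any new enumeration. By \cref{cor:trans inv}, $\pi(\eta_i=1)$ does not depend on $i$, so
\[
n\,\pi(\eta_1=1)=\sum_{i\in[n]}\pi(\eta_i=1)=\mathbb{E}_\pi[N_1(\eta)]=\frac{1}{Z_{n,m}}\sum_{\beta\in\Omega}N_1(\beta)\,w(\beta),
\]
where $w(\beta)$ denotes the unnormalized weight in \eqref{limiting_distribution}. This leaves only the weighted sum $\sum_\beta N_1(\beta)w(\beta)$ to compute.

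For that sum I will reuse the decomposition that produces \eqref{Z_{n,m}_final}. The all-zero configuration contributes the term $1$ and has $N_1=0$. Every other configuration has $k=N_1(\beta)\geqslant 1$. Its $k$ ones cut the ring into $k$ gaps of zeros, counted cyclically. For a fixed $k$, the $(k,M,N,x)$ term in \eqref{Z_{n,m}_final} is exactly the total weight of configurations with $k$ ones, $x_r$ gaps of length $r\in[m-2]$, $N$ gaps of length at least $m-1$, and $M=\sum_r r x_r$. To see this, note that gaps of length $r\leqslant m-2$ give the factor $(1-p_1)^r$. Each gap of length at least $m-1$ contains exactly one occurrence of $0^{m-1}1$ and gives $(1-p_1)^{m-1}p_2^{-1}$. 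Gaps of length $0$ contribute nothing. All configurations in a class share the same weight $p_1^k(1-p_1)^{M+(m-1)N}p_2^{-N}$.

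Since every configuration in the $(k,M,N,x)$ class has $N_1=k$, the sum $\sum_\beta N_1(\beta)w(\beta)$ equals $\sum_k k\cdot(\text{the $k$-th block of }Z_{n,m}-1)$. Multiplying the $(k,M,N,x)$ summand of \eqref{Z_{n,m}_final} by $k$ cancels the prefactor $n/k$ to $n$. Dividing by $n$ then gives exactly \eqref{eq:limiting_prob_cell_occupied_by_1}.

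The only real work is confirming that the summands of \eqref{Z_{n,m}_final} are organized by $k=N_1(\beta)$ and that $n/k$ times the multinomial times the binomial counts the configurations in each class. If \cref{thm:main_2} is proved this way, the point is immediate. If not, I would verify the count directly, which is the main obstacle.

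For the direct check, count pairs (configuration, marked $1$). Put the marked one at site $1$ and read off the ordered sequence of $k$ gaps. The multinomial chooses which gaps are short of each length $r$, which are long (there are $N$), and which are empty. The long gaps have total length $n-k-M$, each at least $m-1$, and the number of such compositions is $\binom{n-k-M-(m-2)N-1}{N-1}$. The convention $\binom{-1}{-1}=1$ handles $N=0$, which forces $M=n-k$.

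Rotating the marked one to any of the $n$ positions and dividing by the $k$ choices of mark gives $n/k$. Alternatively, and more simply, the number of configurations with a $1$ at site $1$ in the class is exactly the multinomial times the binomial, which yields \eqref{eq:limiting_prob_cell_occupied_by_1} without using translation invariance at all. I would present this version as a cross-check.
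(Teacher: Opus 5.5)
Your proposal is correct, but your main route is not the paper's. The paper's proof is exactly the argument you relegate to a cross-check. It pins the special $1$ at site $1$ and notes that the configurations with $\beta_1=1$, $N_1(\beta)=k$, $N_{10^s1}(\beta)=x_s$ and $N_{0^{m-1}1}(\beta)=N$ correspond bijectively to ordered gap sequences. They are therefore counted by \eqref{intermediate_partition_function} with the factor $n/k$ omitted.

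Your main route is different. You use rotation invariance to write $\pi(\eta_1=1)=\frac1n\mathbb{E}_\pi[N_1(\eta)]$, and you read $\mathbb{E}_\pi[N_1]$ off the $k$-graded summands of \eqref{Z_{n,m}_final}. Rotation invariance is also evident directly from \eqref{limiting_distribution}, since all its statistics are cyclic counts. This is legitimate because the paper's proof of \cref{thm:main_2} does show that each $(k,M,N,\overline{x})$ summand is exactly the total weight of that class.

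The two routes rest on the same double count, however. The factor $n/k$ in \cref{thm:main_2} comes from $n\cdot\#\{\text{class members with }\beta_1=1\}=k\cdot\#\{\text{class members}\}$. Your argument simply runs this identity backwards, so the pinned count is logically prior and gives the result in one step.

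What your formulation adds is the structural identity density $=\frac1n\mathbb{E}_\pi[N_1]$. Equivalently, it is $\frac1n\, z\partial_z\log Z_{n,m}$ evaluated at $z=p_1$, where $z^{k}$ replaces $p_1^{k}$ as a fugacity for $1$s, independent of $q_1=1-p_1$ and $q_2$. This shows that any refinement of the partition function by the number of $1$s yields the density with no separate enumeration.
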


We prove \cref{thm:main_2,thm:limiting_prob_cell_occupied_by_1} in \cref{sec:proof_main_2}.

We next focus on the neighbourhood size $m=2$, where we are able to prove more explicit formulas.

\begin{prop}
\label{prop:detailed m=2}
The \pca{} with $m=2$
is reversible if and only if $n = 2$, or 
$n > 2$ with $p_1 + p_2 = 1$. 
\end{prop}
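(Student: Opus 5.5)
We will verify the detailed balance equation \eqref{detailed balance} directly. By \cref{thm:main_1}, the stationary distribution $\pi$ is unique, so reversibility is equivalent to \eqref{detailed balance} holding for this particular $\pi$. For $m=2$ the sum over $r\in[m-2]$ in \eqref{limiting_distribution} is empty. Setting $c=(1-p_1)/p_2$, this gives $\pi(\alpha)\propto p_1^{N_1(\alpha)}c^{N_{01}(\alpha)}$.

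For $m=2$ the transition probability factorizes over sites: $\Prob[\alpha\to\beta]=\prod_{i\in[n]} f(\alpha_i\alpha_{i+1},\beta_i)$. The factors are as follows.
\begin{itemize}
\item If $\alpha_i=1$, then $f=1$ when $\beta_i=0$ and $f=0$ when $\beta_i=1$.
\item If $\alpha_i\alpha_{i+1}=00$, then $f=p_1$ when $\beta_i=1$ and $f=1-p_1$ when $\beta_i=0$.
\item If $\alpha_i\alpha_{i+1}=01$, then $f=1-p_2$ when $\beta_i=1$ and $f=p_2$ when $\beta_i=0$.
\end{itemize}

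\emph{Sufficiency when $p_1+p_2=1$.} Then $c=1$, so $\pi(\alpha)\propto p_1^{N_1(\alpha)}$. Moreover $1-p_2=p_1$ and $p_2=1-p_1$, so $f$ no longer depends on $\alpha_{i+1}$. Absorb the factor $p_1^{\alpha_i}$ coming from $\pi(\alpha)$ into the $i$-th term. This gives
\[
\pi(\alpha)\Prob[\alpha\to\beta]\propto\prod_i g(\alpha_i,\beta_i),
\]
with $g(0,0)=1-p_1$, $g(0,1)=g(1,0)=p_1$ and $g(1,1)=0$. Since $g$ is symmetric, detailed balance holds for every $n$.

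\emph{The case $n=2$.} There are only four states $00,01,10,11$, so this is a finite check. We list the few transitions with positive probability, and by \cref{cor:trans inv} only the pairs $\{00,01\}$, $\{00,11\}$ and $\{01,10\}$ need checking. For $n=2$ the ring makes site $i+1$ coincide with site $i-1$. This degeneracy should make all these identities hold for arbitrary $p_1,p_2$; that is the computation we will carry out.

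\emph{Necessity for $n\geqslant 3$.} Take $\alpha=10^{n-1}$ and $\beta=010^{n-2}$.
\begin{itemize}
\item For $\alpha\to\beta$: site $1$ contributes $1$, site $2$ contributes $p_1$ (its pattern is $00$ since $n\geqslant3$), site $n$ contributes $p_2$ (pattern $01$), and the remaining sites contribute $(1-p_1)^{n-3}$. Together with $\pi(\alpha)\propto p_1c$, this gives $\pi(\alpha)\Prob[\alpha\to\beta]\propto p_1^2(1-p_1)^{n-2}$.
\item For $\beta\to\alpha$: site $1$ contributes $1-p_2$, site $2$ contributes $1$, and site $n$ together with the other sites contributes $(1-p_1)^{n-2}$. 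Since $\pi(\beta)=\pi(\alpha)$ by \cref{cor:trans inv}, we get $\pi(\beta)\Prob[\beta\to\alpha]\propto p_1c(1-p_2)(1-p_1)^{n-2}$.
\end{itemize}
Equating the two sides gives $p_1p_2=(1-p_1)(1-p_2)$, which is equivalent to $p_1+p_2=1$. If $p_2=1$, the forward transition has positive probability while the backward one does not, so the chain is irreversible (and $p_1+p_2\neq1$ anyway).

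The main potential obstacle is the $n=2$ verification, because wrap-around effects there must be handled carefully.
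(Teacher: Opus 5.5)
Your argument is correct. For necessity, though, it takes a different route from the paper.

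The paper tracks the sets $\Pos_{ab}(\alpha)\cap\Pos_{cd}(\beta)$ and shows that, for every pair with transitions in both directions,
\[
\frac{\pi(\alpha)\Prob[\alpha\to\beta]}{\pi(\beta)\Prob[\beta\to\alpha]}=\left(\frac{p_1p_2}{(1-p_1)(1-p_2)}\right)^{|\Pos_{10,01}|-|\Pos_{01,10}|}.
\]
It then concludes $p_1+p_2=1$ without ever naming a pair whose exponent is nonzero.

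You instead exhibit one explicit witness, $\alpha=10^{n-1}$ and $\beta=010^{n-2}$. For this pair $\Pos_{10,01}=\{1\}$ and $\Pos_{01,10}=\emptyset$ once $n\geqslant 3$, so the exponent is $1$. Your direct values $p_1^2(1-p_1)^{n-2}$ and $p_1c(1-p_2)(1-p_1)^{n-2}$ are correct.

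\textbf{Comparison of the two routes.}
\begin{itemize}
\item Your route is shorter and supplies the step the paper leaves implicit.
\item It also covers $p_2=1$ directly. The paper's ratio divides by $1-p_2$ and relies on allowed transitions being symmetric, which fails when $p_2=1$.
\item The paper's general formula gives more information: it identifies exactly which pairs violate detailed balance and by what factor.
\item The formula also makes $n=2$ transparent. There the only relevant pair, $\{01,10\}$, has exponent $1-1=0$.
\end{itemize}

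For sufficiency, your symmetric kernel $g(a,b)=p_1^{a}f(a,b)$ is just an explicit verification of the paper's remark. When $p_1+p_2=1$, the chain becomes the site-independent one in which each $0$ becomes $1$ with probability $p_1$ and each $1$ becomes $0$.

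\textbf{The deferred case $n=2$.} You defer this check, as the paper does, and it does go through. With $c=(1-p_1)/p_2$, the weights of $00,01,10,11$ are $1,p_1c,p_1c,p_1^2$. The three pairs give:
\begin{itemize}
\item $\{00,01\}$: $p_1(1-p_1)=p_1c\,p_2$.
\item $\{00,11\}$: $p_1^2=p_1^2\cdot 1$.
\item $\{01,10\}$: $p_1c(1-p_2)$ on both sides.
\end{itemize}
The pairs $\{01,11\}$ and $\{10,11\}$ have no transitions in either direction.
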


We now obtain a more compact expression for the partition function $Z_{n,2}
\equiv Z_{n,2}(p_1,p_2)$, which is captured by the following theorem:
\begin{theorem}
\label{thm:main_3}
We fix $p_1\in(0,1)$ and $p_2>0$. When $m=2$, the sequence $\{Z_{n,2}\}$ of partition functions of the \pca{} satisfies the recurrence relation
\begin{equation}\label{partition_recurrence_m=2}
p_1(1-p_1-p_2)Z_{n,2}+p_2(1+p_1)Z_{n+1,2}=p_2 Z_{n+2,2} \quad \text{for each } n\geqslant 2,
\end{equation}
with $Z_{0,2} = 2$ and $Z_{1,2} = 1 + p_1$. Equivalently, the generating function for the  sequence of partition functions, $\{Z_{n,2}\}$, is given by
\begin{equation}
\label{partition_function_gf}
\sum_{n=0}^{\infty}Z_{n,2}x^{n}=\frac{(2-x-p_1x)p_2}{p_2-p_2x(1+p_1)-x^{2}p_1(1-p_1-p_2)}.
\end{equation}
\end{theorem}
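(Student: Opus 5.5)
The plan is to write $Z_{n,2}$ as the trace of the $n$-th power of a $2\times 2$ transfer matrix. The recurrence is then Cayley--Hamilton, and the generating function is a trace of a resolvent.

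\textbf{Weights for $m=2$.} By \cref{thm:main_1} with $m=2$, the sum over $r\in[m-2]$ is empty. So the weight of $\beta\in\Omega$ is
\[
p_1^{N_1(\beta)}\,q^{N_{01}(\beta)}, \qquad q:=\frac{1-p_1}{p_2}.
\]
On the ring, $N_{01}(\beta)$ equals the number of cyclically adjacent pairs $(\beta_i,\beta_{i+1})=(0,1)$. Hence the weight factorises over consecutive pairs around the cycle.

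\textbf{Transfer matrix.} Define $T$ indexed by $a,b\in\{0,1\}$ with $T_{a,b}=p_1^{b}\,q^{\chi[(a,b)=(0,1)]}$, i.e.
\[
T=\begin{pmatrix}1 & p_1q\\ 1 & p_1\end{pmatrix}.
\]
Then for every $n\geqslant 2$,
\[
\operatorname{tr}(T^n)=\sum_{\beta}\prod_{i}T_{\beta_i,\beta_{i+1}}=Z_{n,2}.
\]
Each site contributes $p_1^{\beta_{i+1}}$ exactly once, and each occurrence of $01$ contributes $q$. A quick check: the all-$0$ configuration gets weight $1$ and the all-$1$ configuration gets weight $p_1^n$, consistent with \eqref{limiting_distribution}. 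I would also confirm this against \cref{eg:m2n3}. For $n=0,1$ we simply define $Z_{0,2}=\operatorname{tr}I=2$ and $Z_{1,2}=\operatorname{tr}T=1+p_1$, matching the stated initial values. The only subtlety is making sure this extension is the convention the statement intends; the recurrence is only claimed for $n\geqslant 2$ anyway, where everything is genuine.

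\textbf{Recurrence.} The characteristic polynomial of $T$ is
\[
\lambda^2-(1+p_1)\lambda+p_1(1-q).
\]
By Cayley--Hamilton, $T^{n+2}=(1+p_1)T^{n+1}-p_1(1-q)T^n$. Taking traces and multiplying by $p_2$, and using $p_2(1-q)=p_1+p_2-1$, gives
\[
p_2Z_{n+2,2}=p_2(1+p_1)Z_{n+1,2}+p_1(1-p_1-p_2)Z_{n,2}.
\]
This is \eqref{partition_recurrence_m=2}; it in fact holds for all $n\geqslant 0$.

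\textbf{Generating function.} We have
\[
\sum_{n\ge0}\operatorname{tr}(T^n)x^n=\operatorname{tr}(I-xT)^{-1}=\frac{2-x\operatorname{tr}T}{\det(I-xT)}.
\]
Here $\det(I-xT)=1-(1+p_1)x+p_1(1-q)x^2$. Multiplying numerator and denominator by $p_2$ yields \eqref{partition_function_gf}. Equivalently, one can derive it from the recurrence and the two initial values.

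There is no real obstacle in this argument. The only care needed is in the bookkeeping: checking that the cyclic pair factorisation reproduces exactly the exponent $N_{0^{1}1}$ in \cref{thm:main_1}, including the degenerate all-$0$ and all-$1$ configurations.
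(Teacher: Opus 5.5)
Your proof is correct, and it takes a genuinely different route from the paper's.

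The paper specializes the gap-counting formula of \cref{thm:main_2} to $m=2$, that is, $Z_{n,2}=1+\sum_{k=1}^{n}\sum_{j=0}^{n-k}\frac{n}{k}\binom{k}{j}\binom{n-k-1}{j-1}p_1^kq^j$ with your $q$. It then evaluates $\sum_n Z_{n,2}x^n$ by direct summation: the $n$-sum via the negative binomial series plus a derivative to absorb the factor $n$, the $j$-sum by the binomial theorem, and the $k$-sum as a geometric series. Finally it adds $x/(1-x)$ for the all-$0$ configurations and the constant $Z_{0,2}=2$ by hand, and reads the recurrence off the denominator.

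You instead observe that for $m=2$ the weight in \cref{thm:main_1} is a product of nearest-neighbour factors around the ring. So $Z_{n,2}=\operatorname{tr}T^n$, and Cayley--Hamilton and the resolvent do the rest.

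What your route buys:
\begin{itemize}
\item It needs only \cref{thm:main_1}, not the enumeration behind \cref{thm:main_2} with its $n/k$ rotation correction.
\item It makes transparent that the recurrence holds for all $n\geqslant 0$. That is what `equivalently', together with the initial values $Z_{0,2},Z_{1,2}$, actually requires.
\item It explains the convention $Z_{0,2}=2$ as $\operatorname{tr}I$ rather than an ad hoc choice.
\item It makes the remaining $m=2$ results nearly immediate. We get $Z_{n,2}=\lambda_+^n+\lambda_-^n$ with $\lambda_\pm=\tfrac12\bigl(1+p_1\pm\sqrt{(1-p_1)^2+4p_1q}\bigr)$. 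Hence the free energy in \cref{thm:free energy m=2} is $\log\lambda_+$, with Perron--Frobenius for the positive matrix $T$ replacing the check $|x_+|<|x_-|$. The density generating function in \cref{thm:density m=2} is $\sum_{n\geqslant 1}(T^n)_{1,1}x^n=[(I-xT)^{-1}]_{1,1}-1=\frac{1-x}{\det(I-xT)}-1$.
\end{itemize}

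What the paper's route buys: its computation doubles as a consistency check on the general-$m$ formula of \cref{thm:main_2}, and it supplies the summation template the paper reuses for the density. Your idea also extends to general $m$, via an $m\times m$ transfer matrix whose state is the number of $0$s since the last $1$, capped at $m-1$.
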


The \emph{free energy} is defined to be
\[
F(m, p_1, p_2) = \lim_{n \to \infty} \frac{1}{n} \log Z_{n,m}(p_1, p_2).
\]

\begin{theorem}
\label{thm:free energy m=2}
The free energy of the \pca{} when $m = 2$ can be written as
\[
F(2, p_1, p_2) = -\log \left( 
\frac{-p_2(1+p_1) + \sqrt{p_2 (1-p_1)(4 p_1 + p_2 - p_1 p_2) }}
{2 p_1(1 - p_1 - p_2)} \right),
\]
where the line $p_1 + p_2 = 1$ is a removable singularity. 
In the latter case, we get
\[
F(2, p_1, 1-p_1) = \log(1 + p_1).
\]
\end{theorem}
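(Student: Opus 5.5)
The plan is to read the free energy off the generating function \eqref{partition_function_gf} in \cref{thm:main_3}. For a rational generating function, the exponential growth rate of the coefficients is $1/\rho$, where $\rho$ is the smallest-modulus pole that is not cancelled by the numerator. Hence $F(2,p_1,p_2)=-\log\rho$.

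\textbf{Roots of the denominator.} Write the denominator as $D(x)=p_2-p_2(1+p_1)x-p_1(1-p_1-p_2)x^2$, and first take $p_1+p_2\neq 1$. The roots of $D$ are
\[
x_\pm=\frac{-p_2(1+p_1)\pm\sqrt{\Delta}}{2p_1(1-p_1-p_2)},\qquad \Delta=p_2^2(1+p_1)^2+4p_1p_2(1-p_1-p_2).
\]
Simplify the discriminant: $\Delta=p_2\bigl(p_2(1+2p_1+p_1^2)+4p_1-4p_1^2-4p_1p_2\bigr)$. This should agree with $p_2(1-p_1)(4p_1+p_2-p_1p_2)=p_2(4p_1+p_2-p_1p_2-4p_1^2-p_1p_2+p_1^2p_2)$. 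Comparing the two, the inner expressions are $4p_1-4p_1^2+p_2-2p_1p_2+p_1^2p_2$ in both cases, so they match. Since $\Delta>0$ for $p_1\in(0,1)$ and $p_2>0$, both roots are real.

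\textbf{Identifying the dominant root.} The product of the roots is $x_+x_-=-p_2/(p_1(1-p_1-p_2))$. The sum is $-p_2(1+p_1)/(p_1(1-p_1-p_2))$.

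If $1-p_1-p_2<0$, the product is positive and the sum is positive, so both roots are positive. If $1-p_1-p_2>0$, the roots have opposite signs. In either case, one checks that $x_+$ (with the $+\sqrt{\Delta}$) has the smallest modulus. The cleanest way is the factorization $D(x)=p_2(1-\lambda_1x)(1-\lambda_2x)$, where $\lambda_{1,2}$ are the roots of $\lambda^2-(1+p_1)\lambda-p_1(1-p_1-p_2)/p_2=0$. These are
\[
\lambda_\pm=\tfrac12\bigl((1+p_1)\pm\sqrt{\Delta}/p_2\bigr).
\]
The dominant one is $\lambda_+>0$ with $\lambda_+>|\lambda_-|$, because their sum $1+p_1$ is positive. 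Then $x_+=1/\lambda_+$.

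\textbf{Ruling out cancellation.} It remains to check that the numerator $(2-x-p_1x)p_2$ does not vanish at $x_+$. It vanishes only at $x=2/(1+p_1)$, and we need $\lambda_+\neq(1+p_1)/2$. This holds since $\Delta>0$. In fact, the closed form $Z_{n,2}=\lambda_+^n+\lambda_-^n$ follows from the recurrence with $Z_{0,2}=2$ and $Z_{1,2}=1+p_1=\lambda_++\lambda_-$. This gives $\frac1n\log Z_{n,2}\to\log\lambda_+$ directly, using $|\lambda_-|<\lambda_+$.

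Finally, I rewrite $\log\lambda_+=-\log x_+$ to obtain the stated formula. This amounts to rationalizing $1/\lambda_+$, which is routine.

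\textbf{The line $p_1+p_2=1$.} Here the quadratic term of $D$ vanishes, and $\lambda_\pm$ become $1+p_1$ and $0$. So $F=\log(1+p_1)$. The formula for $x_+$ has the form $0/0$ on this line. One can either take the limit in $x_+$, or simply note that the $\lambda_+$ expression is continuous there. The main care point is the sign and dominance argument for $\lambda_\pm$, together with checking the removable singularity; both follow from the $\lambda$-form above.
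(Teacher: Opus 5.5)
Your proposal is correct and follows the paper's route: identify the smallest-modulus pole $x_+$ of the rational generating function from \cref{thm:main_3}, take $F=-\log x_+$, and handle $p_1+p_2=1$ separately, where the denominator becomes linear. Your reciprocal-root form $\lambda_\pm$, with $\lambda_++\lambda_-=1+p_1>0$, supplies the dominance check $|x_+|<|x_-|$ that the paper only asserts, and the exact identity $Z_{n,2}=\lambda_+^n+\lambda_-^n$ (from partial fractions of the generating function) makes both the limit and the removable singularity immediate.
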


See \cref{fig:free energy} for a contour plot of the free energy.

\begin{center}
\begin{figure}[h!]
\includegraphics[scale=0.6]{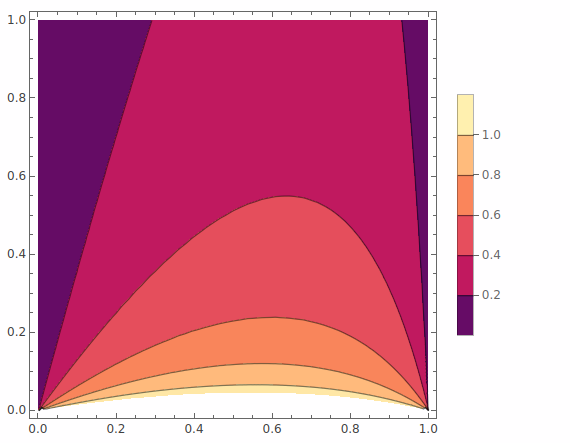}
\caption{A contour plot of the free energy.}
\label{fig:free energy}
\end{figure}
\end{center}

\begin{theorem}
\label{thm:density m=2}
When $m=2$, the generating function for the sequence 
$\{Z_{n,2} \pi(\eta_{1}=1)\}_{n}$ is given by 
\begin{equation}
\sum_{n=0}^{\infty}Z_{n,2} \pi(\eta_{1}=1) x^{n} =
\frac{p_{1}x(1-x+q_{2}x)}{(1-x)(1-p_{1}x)-p_{1}q_{2}x^{2}},
\label{eq:gen_fn_limiting_prob_cell_occupied_by_1}
\end{equation}
where $q_{2}=p_{2}^{-1}(1-p_{1})$.
\end{theorem}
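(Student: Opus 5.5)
The plan is to bypass the general formula of \cref{thm:limiting_prob_cell_occupied_by_1} and compute the generating function directly from the product form \eqref{limiting_distribution} in \cref{thm:main_1}, using a decomposition of the configurations into blocks.

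First I will specialise \eqref{limiting_distribution} to $m=2$. The sum over $r\in[m-2]=[0]$ is empty, and $(m-1)N_{0^{m-1}1}=N_{01}$. So the unnormalised weight of $\beta$ is
\[
w(\beta)=p_1^{N_1(\beta)}\Bigl(\tfrac{1-p_1}{p_2}\Bigr)^{N_{01}(\beta)}=p_1^{N_1(\beta)}q_2^{N_{01}(\beta)}.
\]
Here $N_{01}(\beta)$ is the number of maximal cyclic runs of $0$s, since each such run is followed by a $1$ whenever $\beta$ contains at least one $1$. Consequently
\[
Z_{n,2}\,\pi(\eta_1=1)=\sum_{\beta\in\Omega,\ \beta_1=1}w(\beta).
\]

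Next I will decompose each configuration with $\beta_1=1$, read linearly from site $1$ to site $n$, uniquely into consecutive blocks. Each block is either the single letter $1$ or a word $10^j$ with $j\geqslant 1$; every block begins with one of the $1$s of $\beta$. The key observation is that, cyclically, every maximal run of $0$s lies inside exactly one block of the form $10^j$, and it is followed by the $1$ that starts the next block, or by $\beta_1=1$ when the block is the last one. Hence $w(\beta)$ factorises over the blocks: a block $1$ contributes $p_1$, and a block $10^j$ contributes $p_1q_2$. Marking length by $x$, the generating function of a single block is
\[
B(x)=p_1x+p_1q_2\sum_{j\geqslant1}x^{j+1}=\frac{p_1x(1-x+q_2x)}{1-x}.
\]
Since a configuration with $\beta_1=1$ is an arbitrary sequence of at least one block (the case $n=0$ contributes $0$), the desired series equals $B/(1-B)$. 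A one-line simplification then gives \eqref{eq:gen_fn_limiting_prob_cell_occupied_by_1}, because
\[
1-B=\frac{(1-x)(1-p_1x)-p_1q_2x^2}{1-x}.
\]

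The only delicate point is the bookkeeping for small $n$, where the occurrence counts on the ring could in principle wrap around more than once, for instance $n=1$ or $n=2$. I will check these directly. For $n=1$ the configuration $(1)$ has $N_{01}=0$ and weight $p_1$. For $n=2$ the configuration $10$ has $N_{01}=1$ and the configuration $11$ has $N_{01}=0$. In each case this agrees with the block weights, so the factorisation holds for every $n\geqslant1$, and this completes the proof.
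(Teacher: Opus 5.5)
Your proof is correct, but it takes a different route from the paper.

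\textbf{The paper's route.} It specialises the general counting formula of \cref{thm:limiting_prob_cell_occupied_by_1} to $m=2$, which gives $Z_{n,2}\pi(\eta_1=1)=\sum_{k}\sum_{j}\binom{k}{j}\binom{n-k-1}{j-1}p_1^{k}q_2^{j}$. Here one chooses which of the $k$ gaps following the $1$s are nonempty, and then takes a composition of the $n-k$ zeros into those gaps. The paper then resums:
\begin{itemize}
\item it sums over $n$ using the negative binomial series $(1-x)^{-j}=\sum_{i}\binom{i+j-1}{j-1}x^{i}$;
\item it sums over $j$ by the binomial theorem;
\item it sums over $k$ as a geometric series.
\end{itemize}

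\textbf{Your route.} You read the weight $p_1^{N_1}q_2^{N_{01}}$ directly off \cref{thm:main_1}. You observe that $\beta_1=1$ prevents any zero-run from straddling the seam between sites $n$ and $1$. Hence the weight is multiplicative over the blocks $10^{j}$, and the sequence construction gives $B/(1-B)$ at once.

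\textbf{How they relate.} The two computations meet at the end. The paper's final line $\sum_{k\geqslant 1}p_1^{k}x^{k}\bigl(1+\tfrac{q_2x}{1-x}\bigr)^{k}$ is exactly $\sum_{k\geqslant1}B^{k}$. So your argument removes the detour through the binomial-coefficient count and its resummation.

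\textbf{What each buys.} The paper's approach makes the $m=2$ result a visible specialisation of its general-$m$ closed form. Yours is shorter and explains why the answer is rational. It also generalises with no extra work. For arbitrary $m$, a block $10^{j}$ contributes $(1-p_1)^{j}$ when $1\leqslant j\leqslant m-2$, and $(1-p_1)^{m-1}/p_2$ when $j\geqslant m-1$. This gives $B_m(x)=p_1x\bigl(1+\sum_{j=1}^{m-2}(1-p_1)^{j}x^{j}+\tfrac{(1-p_1)^{m-1}}{p_2}\tfrac{x^{m-1}}{1-x}\bigr)$ and a rational generating function $B_m/(1-B_m)$ for the density numerators, which the paper does not state.

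Your small-$n$ check is adequate. Since the model formally requires $n\geqslant m=2$, the $n=1$ coefficient is in both treatments just the value of the weight formula.
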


The proofs of \cref{prop:detailed m=2}, and \cref{thm:main_3,thm:free energy m=2,thm:density m=2} are given in \cref{sec:m=2}.

\section{Stationary distribution}
\label{sec:proof_main_1}

We begin by establishing the existence of a unique stationary distribution for the 
\pca{}
when $p_1 \in (0,1)$ and $p_2>0$. Given any configuration $\beta\in\Omega$ and any two integers $j_{1}\in[n]\cup\{0\}$ and $j_{2}\in[n]$ such that $j_{1}<j_{2}$, we set
\begin{equation}
\beta_{[j_{1}+1,j_{2}]}=\left(\beta_{j_{1}+1},\beta_{j_{1}+2},\ldots,\beta_{j_{2}}\right).\label{partial_configuration}
\end{equation}
We abbreviate this notation to simply $\beta_{[j_{2}]}$ if $j_{1}=0$. 
Recall that for configurations $\alpha, \beta \in \Omega$, we denote by $\Prob[\alpha \rightarrow \beta]$ the transition probability $\Prob[\eta(t+1)=\beta\big|\eta(t)=\alpha]$. 
Even more generally, given indices $j_{1}\in[n]\cup\{0\}$ and $j_{2}\in[n]$ with $j_{1} < j_{2}$, we define
\begin{equation}
\Prob[\alpha\rightarrow\beta_{[j_{1}+1,j_{2}]}] = \prod_{j\in\{j_{1}+1,j_{1}+2,\ldots,j_{2}\}}\Prob\left[\eta_{j}(t+1)=\beta_{j}\big|\eta(t)=\alpha\right].\label{partial_transition_probability}
\end{equation}
In particular, if $j_{1}=0$, we abbreviate the notation in \eqref{partial_transition_probability} to 
$\Prob[\alpha\rightarrow\beta_{[j_{2}]}]$, and we write $\Prob[\alpha\rightarrow\beta_{[j_2, j_{2}]}] = 
\Prob[\alpha\rightarrow\beta_{j_{2}}]$.

\begin{lemma}
\label{lem:irreducible_aperiodic}
The Markov chain $\{\eta(t)\}_{t \in \mathbb{N}_{0}}$ is irreducible and aperiodic as long as $p_1$ lies in the open interval $(0,1)$ and $p_2$ is strictly positive.
\end{lemma}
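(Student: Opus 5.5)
Since the state space $\Omega$ is finite, it is enough to exhibit one configuration that every configuration can reach, that can reach every configuration, and that has a self-loop. The natural hub is the all-zero configuration $0^{n}$.

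\emph{Step 1: every $\alpha \in \Omega$ reaches $0^{n}$ in one step.} Consider cell $j$ under $\alpha$. If its window is $0^{m}$, rule \eqref{rule_1} sends it to $0$ with probability $1-p_1>0$. If its window is $0^{m-1}1$, rule \eqref{rule_2} sends it to $0$ with probability $p_2>0$. In every other case, rule \eqref{rule_3} sends it to $0$ with probability $1$. Cells update independently, so $\Prob[\alpha\to 0^n]>0$. If $p_2=1$ this is still fine, since then the relevant probability is simply $1$.

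\emph{Step 2: aperiodicity.} Taking $\alpha=0^n$ in Step 1 gives $\Prob[0^n\to 0^n]=(1-p_1)^n>0$. So $0^n$ has a self-loop, and once irreducibility is established the whole chain is aperiodic.

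\emph{Step 3: $0^{n}$ reaches every $\beta\in\Omega$ in one step.} From $0^n$, every cell's window is $0^{m}$, because $m\leqslant n$ and the ring wraps around. So by rule \eqref{rule_1} each cell independently becomes $1$ with probability $p_1$ and $0$ with probability $1-p_1$. Hence
\[
\Prob[0^n\to\beta]=p_1^{N_1(\beta)}(1-p_1)^{n-N_1(\beta)}>0,
\]
since $p_1\in(0,1)$.

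Combining Steps 1 and 3, any $\alpha$ reaches any $\beta$ in exactly two steps, so the chain is irreducible. By Step 2 it is aperiodic; in fact the two-step transition matrix has all entries strictly positive, so the chain is primitive.

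There is no real obstacle here. The only point needing care is the case $m=n$: the window of each cell then covers the whole ring, and it is still $0^{m}$ under $0^n$, so Step 3 goes through unchanged.
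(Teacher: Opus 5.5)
Your proof is correct and follows the paper's own argument. In both, $0^{n}$ serves as a hub: every configuration reaches it in one step (using $p_1<1$ and $p_2>0$), it reaches every configuration in one step (using $p_1\in(0,1)$), and the self-loop $\Prob[0^{n}\to 0^{n}]=(1-p_1)^{n}>0$ gives aperiodicity.
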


\begin{proof}
The proof relies on the observation that any configuration $\beta=(\beta_{i}: i \in [n])$, belonging to the state space $\Omega$, can be `reached' from the configuration $0^{n}$ (in which each cell of $[n]$ is occupied by the symbol $0$) in a single epoch, and conversely, any configuration $\beta \in \Omega$ can `lead to' the configuration $0^{n}$ in a single epoch. Formally speaking, the transition probability $\Prob[0^{n}\rightarrow \beta]$ is strictly positive because of \eqref{rule_1} and the assumption that $p_1\in(0,1)$, whereas the transition probability $\Prob[\beta \rightarrow 0^{n}]$ is strictly positive because of all three of \eqref{rule_1}, \eqref{rule_2} and \eqref{rule_3}, and the assumptions that $p_1<1$ and $p_2>0$. While this is enough to establish irreducibility, the claim of aperiodicity is established by noting that the transition probability $\Prob[0^{n}\rightarrow 0^{n}]$ is also strictly positive.
\end{proof}

\cref{lem:irreducible_aperiodic} is enough to establish the existence of a \emph{unique} stationary probability distribution $\pi$ on $\Omega$ such that $\lim_{t \rightarrow \infty}\nu_{t}=\pi$ for all initial distributions $\nu_{0}$ (see, for instance, results from \cite[Chapter 3]{karlin-2014}). Moreover, $\pi$ is the unique probability distribution on $\Omega$ that satisfies \eqref{gen_identity}.
Consequently, to establish \cref{thm:main_1}, it suffices for us to show that the expression for $\pi$ given by \eqref{limiting_distribution} satisfies the master equation in \eqref{gen_identity}. This is what we accomplish in the rest of \cref{sec:proof_main_1}. 

Fix an arbitrary configuration $\beta=(\beta_{1},\beta_{2},\ldots,\beta_{n}) \in \Omega$, and let us define $\mathcal{S}_{\beta}$ to be the subset of $\Omega$ consisting of all those configurations $\alpha$ such that $\Prob[\alpha \rightarrow \beta]>0$. 
Let $N_{0^{r}}(\beta)$, for any $r\in\mathbb{N}$, denote the number of occurrences of the subsequence $0^{r}$ in $\beta$, i.e.\
\begin{equation}
N_{0^{r}}(\beta)=\sum_{i \in [n]} \chi\left[(\beta_{i},\beta_{i+1},\ldots,\beta_{i+r-1})=0^{r}\right].\nonumber
\end{equation}

We fix $p_1 \in (0,1)$ and $p_2 \in (0,1]$. Using \cref{cor:trans inv}, we can write $\beta$ uniquely as 
\begin{equation}\label{beta_decomposed}
\beta=\underbrace{1^{k_{1}}0^{\ell_{1}}}\underbrace{1^{k_{2}}0^{\ell_{2}}}\ldots \underbrace{1^{k_{R-1}}0^{\ell_{R-1}}}\underbrace{1^{k_{R}}0^{\ell_{R}}},
\end{equation}
for some $R \in \mathbb{N}$ and some $k_{1}, \ell_{1}, \ldots, k_{R}, \ell_{R} \in \mathbb{N}$ as long as $\beta \in \Omega\setminus\{0^{n},1^{n}\}$.
When $\beta=1^{n}$, we set $R=1$, $k_{1}=n$ and $\ell_{1}=0$, whereas when $\beta=0^{n}$, we set $R=1$, $k_{1}=0$ and $\ell_{1}=n$. With $\beta$ written in the form stated in \eqref{beta_decomposed}, and with $s_{i}$ defined as
\begin{equation}
s_{0}=0 \quad \text{and} \quad s_{i}= \sum_{j \in [i]} \left(k_{j}+\ell_{j}\right) \text{ for each } i \in [R],\label{s_{i}_defns}
\end{equation}
we refer to the tuple $(s_{i-1}+1,s_{i-1}+2,\ldots,s_{i})$ of cells as the $i$-th \emph{block} of the configuration $\beta$. Note that the $i$-th block of $\beta$ consists of $k_{i}$ consecutive occurrences of the symbol $1$, followed by $\ell_{i}$ consecutive occurrences of the symbol $0$, i.e.\
\begin{equation}
\beta_{s_{i-1}+1}=\cdots=\beta_{s_{i-1}+k_{i}}=1 \quad \text{and} \quad \beta_{s_{i-1}+k_{i}+1}=\cdots=\beta_{s_{i}}=0.\nonumber
\end{equation}
In particular, when $\beta\in\{0^{n},1^{n}\}$, the entire configuration $\beta$ forms a single block. 

As mentioned right before stating the stochastic update rules in \eqref{rule_1}, \eqref{rule_2} and \eqref{rule_3}, conditioned on the configuration $\eta(t)$, the state of the cell $j$ is updated to $\eta_{j}(t+1)$ independent of all else, for each $j\in[n]$, so that we can write, using the notation introduced in \eqref{partial_transition_probability}, for each $\alpha \in \mathcal{S}_{\beta}$, 
\begin{equation}
\Prob[\alpha \rightarrow \beta]=\prod_{j \in [n]}\Prob\left[\eta_{j}(t+1)=\beta_{j}\big|\eta(t)=\alpha\right]=\prod_{i\in[R]}\Prob[\alpha\rightarrow\beta_{[s_{i-1}+1,s_{i}]}].\label{transition_probability_decomposed}
\end{equation}
It is evident, from \eqref{transition_probability_decomposed}, that it suffices for us to focus on $\Prob[\alpha\rightarrow\beta_{[s_{1}]}]$, which corresponds to the first block of $\beta$, as the analysis can then be generalized to any of the remaining $(R-1)$ blocks of $\beta$ using \cref{cor:trans inv}. 
For the sake of brevity of notation, we replace $k_{1}$ by $k$ and $\ell_{1}$ by $\ell$ in some of the analysis that follows, though eventually, we revert back to $k_{1}$ and $\ell_{1}$ when putting together our findings to obtain a final expression for \eqref{transition_probability_decomposed}. With this updated notation, the first block of $\beta$ corresponds to the tuple $\beta_{[k+\ell]}$ consisting of
\begin{equation}\label{beta_first_block}
\beta_{i}=1 \text{ for each } i \in [k] \quad \text{and} \quad \beta_{i}=0 \text{ for each } i \in [k+1,k+\ell].
\end{equation}
Note that if $\beta$ comprises a single block, we have $k+\ell=n$, and the entire structure of $\beta$ is captured by \eqref{beta_first_block}, whereas if $\beta$ comprises at least two blocks, we have $k+\ell+1\leqslant n$, and from \eqref{beta_decomposed}, the first element of the second block of $\beta$ is given by $\beta_{k+\ell+1}=1$. 

We will first prove the following lemma as a key part of the proof of \cref{thm:main_1}.
\begin{lemma}
\label{lem:gen identity}
For any configuration $\beta \in \Omega \setminus \{0^n\}$,
\begin{align}
\sum_{\alpha \in\mathcal{S}_{\beta}}\Prob[\alpha \rightarrow \beta] \pi(\alpha)=
&\frac{(1-p_1)^{n}}{Z_{n,m}}
\sum_{\alpha \in\mathcal{S}_{\beta}} 
\left( \frac{p_1}{1-p_1} \right)^{N_{1}(\alpha) + N_{1}(\beta)}
\nonumber\\
& \times
\left(\frac{(1-p_1)(1-p_2)}{p_1p_2}\right)^{\left|\left\{i \in [R]:\ell_{i}\geqslant m-1,\alpha_{s_{i-1}+k_{i}+m-1}=1\right\}\right|}.
\label{gen_identity_simplified_step}
\end{align}
\end{lemma}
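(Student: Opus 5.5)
The plan is to compute $\Prob[\alpha\to\beta]\,\pi(\alpha)$ site by site for a fixed $\alpha\in\mathcal{S}_\beta$, and to rewrite $\pi(\alpha)$ from \eqref{limiting_distribution} in terms of window counts of $\alpha$. Once both are in that form, the exponents should telescope.

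\textbf{Step 1: the factors of $\Prob[\alpha\to\beta]$.} By \eqref{transition_probability_decomposed}, $\Prob[\alpha\to\beta]$ is a product over sites $j$. The factor at $j$ depends only on the window $w_j(\alpha)=(\alpha_j,\dots,\alpha_{j+m-1})$:
\begin{itemize}
\item if $\beta_j=1$, the factor is $p_1$ when $w_j=0^m$, is $1-p_2$ when $w_j=0^{m-1}1$, and vanishes otherwise;
\item if $\beta_j=0$, the factor is $1-p_1$ when $w_j=0^m$, is $p_2$ when $w_j=0^{m-1}1$, and is $1$ otherwise.
\end{itemize}
Let $A(\alpha)$ be the number of $1$-sites of $\beta$ whose window is $0^{m-1}1$. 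Then
\[
\Prob[\alpha\to\beta]=p_1^{N_1(\beta)-A}(1-p_2)^{A}(1-p_1)^{N_{0^m}(\alpha)-(N_1(\beta)-A)}\,p_2^{N_{0^{m-1}1}(\alpha)-A}.
\]
This uses the fact that every window $0^m$ or $0^{m-1}1$ of $\alpha$ sits at exactly one site, which is either a $1$-site or a $0$-site of $\beta$.

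\textbf{Step 2: identify $A$.} Here \eqref{necessary_cond_for_1} enters. In block $i$, the $1$-sites $s_{i-1}+1,\dots,s_{i-1}+k_i$ force $\alpha$ to vanish on $[s_{i-1}+1,\,s_{i-1}+k_i+m-2]$. So every $1$-site of the block except the last has window $0^m$. The last one has window $0^{m-1}1$ exactly when $\alpha_{s_{i-1}+k_i+m-1}=1$.

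If $\ell_i<m-1$, the cell $s_{i-1}+k_i+m-1$ is a $1$-site of block $i+1$, or lies within $m-2$ cells after one. Hence $\alpha$ must vanish there, so that case never contributes. This gives $A=|\{i\in[R]:\ell_i\geqslant m-1,\ \alpha_{s_{i-1}+k_i+m-1}=1\}|$. The hypothesis $\beta\neq 0^n$ guarantees that each block has $k_i\geqslant1$, so the block description is valid.

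\textbf{Step 3: rewrite $\pi(\alpha)$.} Decompose the zeros of $\alpha$ into maximal runs. A run of length $r\leqslant m-2$ between two $1$s contributes $r$ zeros. A run of length $L\geqslant m-1$ contributes $(L-m+1)$ windows $0^m$, plus one window $0^{m-1}1$ accounting for the remaining $m-1$ zeros. Therefore
\[
n-N_1(\alpha)=\sum_{r\in[m-2]}rN_{10^r1}(\alpha)+(m-1)N_{0^{m-1}1}(\alpha)+N_{0^m}(\alpha).
\]
The case $\alpha=0^n$ is consistent with this, with $N_{0^m}=n$. Hence
\[
\pi(\alpha)=Z_{n,m}^{-1}\,p_1^{N_1(\alpha)}(1-p_1)^{n-N_1(\alpha)-N_{0^m}(\alpha)}\,p_2^{-N_{0^{m-1}1}(\alpha)}.
\]

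\textbf{Step 4: multiply.} In the product with Step 1, $N_{0^m}(\alpha)$ and $N_{0^{m-1}1}(\alpha)$ cancel. What remains is
\[
\frac{(1-p_1)^n}{Z_{n,m}}\left(\frac{p_1}{1-p_1}\right)^{N_1(\alpha)+N_1(\beta)}\left(\frac{(1-p_1)(1-p_2)}{p_1p_2}\right)^{A}.
\]
Summing over $\alpha\in\mathcal{S}_\beta$ gives \eqref{gen_identity_simplified_step}.

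The main obstacle is the bookkeeping in Steps 2 and 3. One must check that windows wrapping around the ring are counted consistently. One must also justify rigorously that a $0^{m-1}1$ window can occur at a $1$-site of $\beta$ only at the last $1$ of a block with $\ell_i\geqslant m-1$.
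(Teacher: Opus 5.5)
Your proof is correct, but it is organized differently from the paper's, and more economically.

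\textbf{The paper's route.} It works block by block. For a block $1^{k}0^{\ell}$ of $\beta$, it decomposes $\alpha_{[k+1,k+\ell]}$ into $0$-strings and $1$-strings and computes the factor each string contributes. It then runs a three-case analysis: $\ell\leqslant m-2$; $\ell\geqslant m-1$ with $\alpha_{k+m-1}=1$; and $\ell\geqslant m-1$ with $\alpha_{k+m-1}=0$. This expresses the exponent of $(1-p_1)$ as $\kappa_i$ in terms of block-restricted counts, which are summed over blocks and combined with $\pi(\alpha)$.

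\textbf{Your route.} You index the factors of $\Prob[\alpha\to\beta]$ by the windows $w_j(\alpha)$ and introduce the auxiliary count $N_{0^m}(\alpha)$. The zero-run identity $n-N_1(\alpha)=\sum_{r\in[m-2]}rN_{10^r1}(\alpha)+(m-1)N_{0^{m-1}1}(\alpha)+N_{0^m}(\alpha)$ puts $\pi(\alpha)$ in the same variables. Then $N_{0^m}(\alpha)$ and $N_{0^{m-1}1}(\alpha)$ cancel identically, and the block structure of $\beta$ enters only through the identification of $A$ in Step 2.

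\textbf{What your approach buys.}
\begin{enumerate}
\item There is no case analysis.
\item You never need the fact that no occurrence of $10^r1$ or $0^{m-1}1$ in $\alpha$ straddles two blocks of $\beta$. The paper uses this silently when it sums the $\kappa_i$ and the $p_2$-exponents over blocks.
\item The same computation with $A=0$ handles $\beta=0^n$, so the paper's separate treatment of that case becomes a special case.
\end{enumerate}
The paper's route does produce per-block transition probabilities explicitly. Nothing later depends on them, though: the completion of the proof of Theorem~\ref{thm:main_1} uses only the identity of the lemma.

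\textbf{Your closing concerns.} Both are already settled by what you wrote. Windows are indexed cyclically by their starting site, and $m\leqslant n$ ensures each window consists of $m$ distinct cells. So your run-counting in Step 3 is valid even when $\alpha$ has a single $1$, since that run has length $n-1\geqslant m-1$. Step 2 genuinely shows that a $1$-site of $\beta$ can carry the window $0^{m-1}1$ only if it is the last $1$ of a block with $\ell_i\geqslant m-1$. The one thing worth stating explicitly is that the Step 1 formula is asserted only for $\alpha\in\mathcal{S}_\beta$. That is what guarantees every $1$-site of $\beta$ has window $0^m$ or $0^{m-1}1$, and it forces $A=0$ when $p_2=1$.
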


The proof of \cref{lem:gen identity} can be divided into several broad steps, and these have been established in detail in the sequel. However, for the convenience of the reader, we outline here the heuristics of the proof, which involves fixing any configuration $\beta\in\Omega$ and considering all the transitions leading to $\beta$. 
\begin{enumerate}
\item We begin by considering $\beta=0^{n}$, which has been dealt with in \cref{subsec:beta_all_0}. This involves computing $\Prob\left[\alpha\rightarrow 0^{n}\right]$ for each $\alpha\in\Omega$, which has been accomplished in \cref{lem:transition_probabilities_beta_all_0}, followed by expressing it in terms of $N_{1}(\alpha)$, $N_{10^{r}1}(\alpha)$ for $r\in[m-2]$ and $N_{0^{m-1}1}(\alpha)$, as shown in \eqref{transition_eq_beta_all_0}, and finally verifying \eqref{gen_identity} by substituting \eqref{transition_eq_beta_all_0} and \eqref{limiting_distribution}.
\item Our analysis for the case of $\beta\neq 0^{n}$ is accomplished in \cref{subsec:beta_not_all_0}. We compute $\Prob[\alpha\rightarrow\beta_{[k]}]$ in \cref{lem:transition_probabilities_first_k_coordinates}, and we compute $\Prob[\alpha\rightarrow\beta_{[k+\ell]}]$, when $\ell\leqslant(m-2)$, in \cref{subsec:ell_less_equal_m-2}. When $\ell\geqslant(m-1)$, we introduce a decomposition of $\alpha_{[k+1,k+\ell]}$, as shown in \eqref{alpha'_decomposed} of \cref{subsec:ell_greater_m-2_beta_not_0}, then deduce the transition probabilities corresponding to the various parts of this decomposition in \cref{lem:transition_probabilities_alpha_{[k+1,k+ell]}}. An expression for $\Prob[\alpha\rightarrow\beta_{[k+\ell]}]$, when $\beta\neq 0^{n}$ and $\ell\geqslant(m-1)$, is then provided in \cref{cor:0_length_first_block_geq_m-1}, and in \cref{lem:identity} of \cref{subsec:express}, this expression has been expressed in terms of $N_{1}, N_{10^{r}1}$ and $N_{0^{m-1}1}$.
\item The final step of the proof of \cref{lem:gen identity} has been established in \cref{subsec:verification_master_eq} using the expression stated in \cref{lem:identity}.
\end{enumerate}

We shall complete the proof of \cref{thm:main_1} in \cref{subsec:complete}.

\subsection{The case of $\beta=0^{n}$}\label{subsec:beta_all_0} 
We begin with the relatively simpler case of $\beta=0^{n}$, wherein $\alpha_{i}$ is allowed to be either $0$ or $1$ for each cell $i \in [n]$ because of \eqref{rule_1}, \eqref{rule_2} and \eqref{rule_3}. We may, then, write $\alpha$ as
\begin{equation}
\alpha=0^{g_{1}}1^{h_{1}}0^{g_{2}}1^{h_{2}}\ldots 0^{g_{S}}1^{h_{S}},\label{alpha_decomposed_beta_all_0}
\end{equation}
where $S \in \mathbb{N}$ and each of $g_{1}, h_{1}, \ldots, g_{S}, h_{S} \in \mathbb{N}_{0}$, such that, for $\alpha \in \Omega\setminus\{0^{n},1^{n}\}$, whenever $g_{i}>0$, we have $h_{i}>0$, for each $i \in [S]$. In particular, we set $S=1$, $g_{1}=n$ and $h_{1}=0$ if $\alpha=0^{n}$, and we set $S=1$, $g_{1}=0$ and $h_{1}=n$ if $\alpha=1^{n}$. We also observe here that $g_{1}=0$ if and only if $\alpha=1^{n}$ and $h_{S}=0$ if and only if $\alpha=0^{n}$ -- in other words, as long as $\alpha\neq 1^{n}$ and $\alpha\neq0^{n}$, the representation of $\alpha$ in \eqref{alpha_decomposed_beta_all_0} must begin with the symbol $0$ and end in the symbol $1$. We define
\begin{equation}
\sigma_{0}=0 \quad \text{and} \quad \sigma_{i}=\sum_{j\in[i]}\left(g_{j}+h_{j}\right) \text{ for each } i \in [S].\label{sigma_i_defns_beta_all_0}
\end{equation}
Naturally, we refer to the tuple $\left(\sigma_{i-1}+1,\sigma_{i-1}+2,\ldots,\sigma_{i-1}+g_{i}\right)$ as the $i$-th \emph{$0$-string} of $\alpha$. The \emph{length} of this string equals $g_{i}$. Likewise, if $\alpha\neq 0^{\ell}$, we have $h_{i}\geqslant 1$ for each $i \in [S]$, and $\alpha_{\sigma_{i-1}+g_{i}+1}=\cdots=\alpha_{\sigma_{i}}=1$ for each $i \in [S]$. We, therefore, refer to $\left(\sigma_{i-1}+g_{i}+1,\ldots,\sigma_{i}\right)$ as the $i$-th \emph{$1$-string} of $\alpha$, for each $i \in [S]$, and its \emph{length} equals $h_{i}$.

\begin{lemma}\label{lem:transition_probabilities_beta_all_0}
When $\beta=0^{n}$ and $\alpha$ is represented as in \eqref{alpha_decomposed_beta_all_0}, with $\alpha\neq 0^{n}$, the transition probabilities corresponding to the sites belonging to the $1$-strings of $\alpha$ are given by
\begin{equation}
\prod_{j\in\left[\sigma_{i-1}+g_{i}+1,\sigma_{i}\right]}
\Prob[\alpha \to \beta_{j}=0] =1 \text{ for each } i\in[S], \label{transition_eq_1_strings_beta_all_0}
\end{equation}
and the transition probabilities corresponding to the sites belonging to the $0$-strings of $\alpha$ equal
\begin{equation}\label{transition_eq_0_strings_beta_all_0}
\prod_{j\in\left[\sigma_{i-1}+1,\sigma_{i-1}+g_{i}\right]}
\Prob[\alpha \to \beta_{j}=0] =
\begin{cases}
1 &\text{for each }i\in[S] \text{ with }g_{i} \leqslant (m-2),\\
(1-p_1)^{g_{i}-m+1}p_2 &\text{for each }i\in[S] \text{ with }g_{i} \geqslant (m-1).
\end{cases}
\end{equation}
When $\alpha=0^{n}$, we have $\Prob\left[\alpha\rightarrow\beta\right]=\Prob\left[0^n \rightarrow 0^n\right]=(1-p_1)^{n}$.
\end{lemma}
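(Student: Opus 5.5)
The plan is a direct site-by-site computation using the local rules \eqref{rule_1}--\eqref{rule_3}. By the independence of updates, $\Prob[\alpha\rightarrow 0^n]$ factors over sites, so we only need, for each $j$, the probability that site $j$ becomes $0$; this depends only on the window $(\alpha_j,\ldots,\alpha_{j+m-1})$. We treat $\alpha=0^n$ first: every window equals $0^m$, so by \eqref{rule_1} each site stays $0$ with probability $1-p_1$, giving $(1-p_1)^n$.

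Now let $\alpha\neq 0^n$ be written as in \eqref{alpha_decomposed_beta_all_0}.

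\textbf{Sites in $1$-strings.} A site $j$ in a $1$-string has $\alpha_j=1$. Its window is then neither $0^m$ nor $0^{m-1}1$ (both start with $0$ because $m\geqslant 2$). So \eqref{rule_3} applies, $\beta_j=0$ with probability $1$, and \eqref{transition_eq_1_strings_beta_all_0} follows.

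\textbf{Sites in $0$-strings.} Fix the $i$-th $0$-string, occupying positions $\sigma_{i-1}+1,\ldots,\sigma_{i-1}+g_i$. Since $\alpha\neq0^n$, it is followed cyclically by a $1$ at position $\sigma_{i-1}+g_i+1$; this holds also for $i=S$ by periodicity and because $h_S\geqslant1$. For a site $j=\sigma_{i-1}+t$ with $1\leqslant t\leqslant g_i$, the window starting at $j$ consists of $g_i-t+1$ zeros, then a $1$, then further symbols (present only if $g_i-t+1<m-1$). There are three cases:
\begin{enumerate}[label=(\roman*)]
\item If $g_i-t+1\geqslant m$, the window is $0^m$ and the factor is $1-p_1$.
\item If $g_i-t+1=m-1$, the window is $0^{m-1}1$ and the factor is $p_2$.
\item If $g_i-t+1\leqslant m-2$, the window contains a $1$ in one of its first $m-1$ positions, so it is neither $0^m$ nor $0^{m-1}1$, and the factor is $1$.
\end{enumerate}

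\textbf{Multiplying.} If $g_i\leqslant m-2$, every site of the string falls in case (iii), so the product is $1$. If $g_i\geqslant m-1$, case (i) covers $t=1,\ldots,g_i-m+1$, exactly one site falls in case (ii), and the remaining sites fall in case (iii). The product is therefore $(1-p_1)^{g_i-m+1}p_2$, which is \eqref{transition_eq_0_strings_beta_all_0}.

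\textbf{Main obstacle.} The only delicate point is the cyclic wraparound. When $g_i$ is large relative to $n$, we must check that the window never reaches back into the same $0$-string after passing the terminating $1$. This is harmless: the window from $j$ hits the terminating $1$ after $g_i-t+1\leqslant g_i<n$ steps, so its first $1$ is always that one, and the classification depends only on the position of the first $1$. The extreme case is $S=1$, $h_1\geqslant1$, $g_1\leqslant n-1$, and the same argument applies to it.
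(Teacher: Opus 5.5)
Your proposal is correct and takes essentially the same route as the paper. Both compute site by site: for a $0$-string of length $g_i\geqslant m-1$, the first $g_i-m+1$ sites contribute $1-p_1$ via \eqref{rule_1}, the next site contributes $p_2$ via \eqref{rule_2}, and the last $m-2$ sites contribute $1$, as do all sites of $1$-strings and of $0$-strings with $g_i\leqslant m-2$. The paper derives these factor-$1$ cases from \eqref{necessary_cond_for_1} instead of classifying the window by the position of its first $1$, and it leaves the cyclic wraparound check implicit rather than spelling it out as you do.
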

\begin{proof}
The identity in \eqref{transition_eq_1_strings_beta_all_0} is an immediate consequence of \eqref{necessary_cond_for_1}, and note that the case of $\alpha=1^{n}$ is taken care of by \eqref{transition_eq_1_strings_beta_all_0}. Next, we assume that $\alpha\notin\left\{0^{n},1^{n}\right\}$. Recall that $\alpha_{\sigma_{i-1}+g_{i}+1}=1$ for each $i \in [S]$, which follows from the observation (made after \eqref{sigma_i_defns_beta_all_0}) that $h_{i}\geqslant 1$ for each $i\in[S]$. This, along with \eqref{necessary_cond_for_1}, ensures that whenever $g_{i} \leqslant (m-2)$, we have  
\begin{equation}
\Prob[\alpha \to \beta_{j}=0] =1\text{ for each } j \in \left\{\sigma_{i-1}+1,\sigma_{i-1}+2,\ldots,\sigma_{i-1}+g_{i}\right\},\nonumber
\label{transition_eq_10}
\end{equation}
thus yielding the first identity of \eqref{transition_eq_0_strings_beta_all_0}. Yet another application of \eqref{necessary_cond_for_1} allows us to conclude that, for $i \in [S]$ with $g_{i}\geqslant(m-1)$, we have
\begin{equation}
\Prob[\alpha \to \beta_j = 0]
=1 \text{ for each } j\in \left\{\sigma_{i-1}+g_{i}-m+3,\ldots,\sigma_{i-1}+g_{i}\right\},\label{transition_eq_3_beta_0}
\end{equation}
while by \eqref{rule_1}, we obtain 
\begin{equation}
\Prob[\alpha \to \beta_j = 0]
=(1-p_1) \text{ for each } j\in\left\{\sigma_{i-1}+1,\ldots,\sigma_{i-1}+g_{i}-m+1\right\}.\label{transition_eq_4_beta_0}
\end{equation}
Note that if $g_{i}=(m-1)$, the set $\left\{\sigma_{i-1}+1,\ldots,\sigma_{i-1}+g_{i}-m+1\right\}$ is empty, and the transition probabilities in \eqref{transition_eq_4_beta_0} would not matter to us. Finally, by \eqref{rule_2}, we obtain
\begin{equation}
\Prob[\alpha \to \beta_{\sigma_{i-1}+g_{i}-m+2} = 0]=p_2.\label{transition_eq_5_beta_0}
\end{equation}
Combining \eqref{transition_eq_3_beta_0}, \eqref{transition_eq_4_beta_0} and \eqref{transition_eq_5_beta_0}, we deduce the second identity stated in \eqref{transition_eq_0_strings_beta_all_0}.

Finally, when $\alpha=0^{n}$, the entire configuration $\alpha$ is its own $0$-string, and it is immediate from \eqref{rule_1} that 
\begin{align}
{}&\Prob[\alpha \to \beta_{j}=0]=(1-p_1) \quad \text{for each } j\in[n],\nonumber
\end{align}
which yields $\Prob\left[\alpha\rightarrow\beta\right]=\Prob\left[0^n \rightarrow 0^n\right]=(1-p_1)^{n}$. This completes the proof of \cref{lem:transition_probabilities_beta_all_0}.
\end{proof}
 
Note that when $\beta=0^{n}$ and $\alpha\neq 0^{n}$, we may express the findings of \cref{lem:transition_probabilities_beta_all_0} as follows:
\begin{equation}\label{transition_eq_13}
\Prob\left[\alpha\rightarrow\beta\right]=\Prob\left[\alpha\rightarrow 0^{n}\right]=\prod_{i\in[S]}\left\{(1-p_{1})^{g_{i}-m+1}p_{2}\right\}\chi\left[g_{i}\geqslant(m-1)\right].
\end{equation}
Our task, now, is to express \eqref{transition_eq_13} in terms of $N_{1}(\alpha)$, $N_{10^{r}1}(\alpha)$ for each $r \in [m-2]$, and $N_{0^{m-1}1}(\alpha)$ (as defined in \eqref{notation:1}, \eqref{notation:10^{r}1} and \eqref{notation:0^{r}1}). 
We note that each $0$-string of $\alpha$ with length $g_{i}\geqslant(m-1)$ corresponds to an occurrence of the subsequence $0^{m-1}1$ in $\alpha$, whereas each $0$-string of $\alpha$ with length $g_{i}=r$ corresponds to an occurrence of the subsequence $10^{r}1$ in $\alpha$, for each $r \in [m-2]$. Consequently, we have
\begin{equation}
\sum_{i \in [S]}\chi\left[g_{i}\geqslant (m-1)\right]=N_{0^{m-1}1}(\alpha) \quad \text{and} \quad \sum_{i\in[R]}\chi\left(g_{i}=s\right)=N_{10^{s}1}(\alpha) \text{ for each } s\in[m-2].\nonumber
\end{equation}
Implementing the observations made above, we see that the exponent of $(1-p_1)$ in \eqref{transition_eq_13} equals:
\begin{align}
\sum_{i\in [S]}(g_{i}-m+1)\chi\left(g_{i}\geqslant m-1\right)=n-N_{1}(\alpha)-\sum_{r\in[m-2]}r N_{10^{r}1}(\alpha)-(m-1)N_{0^{m-1}1}(\alpha).\nonumber
\end{align}
The exponent of $p_2$ in \eqref{transition_eq_13} equals $N_{0^{m-1}1}(\alpha)$. Consequently, we can write \eqref{transition_eq_13} as 
\begin{equation}
\Prob[\alpha \rightarrow \beta]=(1-p_1)^{n-N_{1}(\alpha)-\sum_{s\in[m-2]}s N_{10^{s}1}(\alpha)-(m-1)N_{0^{m-1}1}(\alpha)}p_2^{N_{0^{m-1}1}(\alpha)}.\label{transition_eq_beta_all_0}
\end{equation}

On the other hand, when $\alpha=\beta=0^{n}$, each of $N_{1}(\alpha)$, $N_{10^{s}1}(\alpha)$ for $s \in [m-2]$, and $N_{0^{m-1}1}(\alpha)$ equals $0$. Consequently, the expression in \eqref{transition_eq_beta_all_0} boils down to simply $(1-p_1)^{n}$, which matches with what we have deduced in \cref{lem:transition_probabilities_beta_all_0}. 

Substituting \eqref{transition_eq_beta_all_0} and \eqref{limiting_distribution} into the left side of \eqref{gen_identity}, we obtain:
\begin{align*}
\sum_{\alpha \in \Omega}\Prob[\alpha \rightarrow \beta]
\pi(\alpha)={}&\sum_{\alpha \in \Omega}(1-p_1)^{n-N_{1}(\alpha)-\sum_{s\in[m-2]}s N_{10^{s}1}(\alpha)-(m-1)N_{0^{m-1}1}(\alpha)}p_2^{N_{0^{m-1}1}(\alpha)}\nonumber\\
&\times \frac{1}{Z_{n,m}}p_1^{N_{1}(\alpha)}(1-p_1)^{\sum_{s=1}^{m-2}s N_{10^{s}1}(\alpha)+(m-1)N_{0^{m-1}1}(\alpha)}p_2^{-N_{0^{m-1}1}(\alpha)}\nonumber\\
={}& \frac{(1-p_1)^{n}}{Z_{n,m}} \sum_{\alpha \in \Omega}\left(\frac{p_1}{1-p_1}\right)^{N_{1}(\alpha)}.
\end{align*}
We can refine the sum according to the number of $1$'s in $\alpha$ to obtain
\begin{align*}
\sum_{\alpha \in \Omega}\Prob[\alpha \rightarrow \beta] \pi(\alpha)
=& \frac{(1-p_1)^{n}}{Z_{n,m}} \sum_{r=0}^{n}\sum_{\substack{\alpha \in \Omega \\ N_{1}(\alpha)=r}}
\left(\frac{p_1}{1-p_1}\right)^{r}\nonumber\\
={}&\frac{(1-p_1)^{n}}{Z_{n,m}}\sum_{r=0}^{n}\binom{n }{ r}\left(\frac{p_1}{1-p_1}\right)^{r}
=\frac{(1-p_1)^{n}}{Z_{n,m}}\left(1+\frac{p_1}{1-p_1}\right)^{n}
=\frac{1}{Z_{n,m}},\nonumber
\end{align*}
by the binomial theorem, which agrees with the expression for $\pi(0^n)$ as given by \eqref{limiting_distribution}. 

\subsection{The case of $\beta\neq 0^{n}$}\label{subsec:beta_not_all_0} Recall the first block $\beta_{[k+\ell]}$ of $\beta$, when $\beta\neq 0^{n}$, as described in \eqref{beta_first_block}. 
\begin{lemma}\label{lem:transition_probabilities_first_k_coordinates} 
For any $\beta \neq 0^{n}$, the transition probability $\Prob[\alpha\rightarrow\beta_{[k]}]$ equals
\begin{equation}
\begin{cases}
p_{1}^{k} & \text{when } \alpha_{k+m-1}=0,\\
p_1^{k-1}(1-p_2) & \text{ when } \alpha_{k+m-1}=1.
  \end{cases}
\end{equation}
\end{lemma}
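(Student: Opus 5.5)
The plan is to compute the product $\prod_{j\in[k]}\Prob[\eta_j(t+1)=1\mid\eta(t)=\alpha]$ site by site, for $\alpha\in\mathcal{S}_{\beta}$. Throughout we use that $\beta_1=\cdots=\beta_k=1$, so every factor concerns the event that a site becomes $1$.

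First, \eqref{necessary_cond_for_1} applied to each $j\in[k]$ forces $\alpha_j=\alpha_{j+1}=\cdots=\alpha_{j+m-2}=0$. Taking the union over $j\in[k]$, every $\alpha\in\mathcal{S}_\beta$ must satisfy
\[
\alpha_{1}=\alpha_2=\cdots=\alpha_{k+m-2}=0 .
\]
Otherwise the product vanishes, and such $\alpha$ are excluded by the definition of $\mathcal{S}_\beta$. Consequently, the length-$m$ window $(\alpha_j,\ldots,\alpha_{j+m-1})$ read by site $j$ has its first $m-1$ entries equal to $0$. The only free entry is $\alpha_{j+m-1}$.

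Second, classify the window for each $j\in[k]$.
\begin{itemize}
\item For $j\leqslant k-1$, the index $j+m-1\leqslant k+m-2$, so $\alpha_{j+m-1}=0$. The window is $0^m$, and \eqref{rule_1} gives the factor $p_1$.
\item For $j=k$, the window is $0^{m-1}\alpha_{k+m-1}$. If $\alpha_{k+m-1}=0$, \eqref{rule_1} gives $p_1$. If $\alpha_{k+m-1}=1$, \eqref{rule_2} gives $1-p_2$.
\end{itemize}
By the conditional independence of the updates, as in \eqref{transition_probability_decomposed}, multiplying the factors gives $p_1^{k}$ or $p_1^{k-1}(1-p_2)$ respectively, which is the claim.

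The only delicate point is the periodic indexing. Indices are taken mod $n$, and if $k+m-2\geqslant n$ the forced zeros could wrap around and overlap the $1$-block of $\beta$ or each other. I would check that the argument only uses the window values of $\alpha$ and never the values of $\beta$, so the wrap-around is harmless.

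When $k=n$ ($\beta=1^n$), the forced zeros imply $\alpha=0^n$. Then $\alpha_{k+m-1}=\alpha_{m-1}=0$ and the formula gives $p_1^n$, which is consistent. In the case $k+m-1>n$ with $k<n$, the index $k+m-1$ (mod $n$) may itself lie among the forced zeros. Then only the first case occurs, and the statement still holds, since its second case is simply vacuous.
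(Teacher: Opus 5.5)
Your proposal is correct and follows essentially the same route as the paper: you use \eqref{necessary_cond_for_1} to force $\alpha_1=\cdots=\alpha_{k+m-2}=0$, read off the factor $p_1$ from \eqref{rule_1} for each $j\leqslant k-1$, and split the factor at $j=k$ according to whether $\alpha_{k+m-1}$ is $0$ or $1$. Your extra check of the wrap-around cases is a harmless refinement that the paper leaves implicit; for instance, $k+m-1>n$ forces $\alpha=0^{n}$, so only the first case of the formula can occur.
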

\begin{proof}
To begin with, we note that the assumption $\beta \neq 0^{n}$ ensures that $k=k_{1}\geqslant 1$ in \eqref{beta_decomposed}. Since we focus on $\alpha \in \mathcal{S}_{\beta}$, each term of the product appearing in \eqref{transition_probability_decomposed} must be strictly positive. In particular, we must ensure that
\begin{enumerate}
\item $\Prob[\alpha\rightarrow\beta_{[k]}]$ is strictly positive, which, in turn, is equivalent to having each term of the product in \eqref{partial_transition_probability}, with $j_{1}=0$ and $j_{2}=k$, strictly positive, and
\item the probability $\Prob[\alpha \to \beta_{k+\ell+1} = 1]$ is strictly positive as well, which is an additional requirement unless $\beta$ comprises a single block.
\end{enumerate}
These requirements, along with \eqref{necessary_cond_for_1} and the fact that $k\geqslant 1$, yield
\begin{equation}\label{alpha_coordinates_surely_0}
\alpha_{i}=0 \text{ for each } i \in \{1,2,\ldots,k,k+1,\ldots,k+m-2\} \cup \{k+\ell+1,\ldots,k+\ell+m-1\}
\end{equation} 
whenever $\alpha \in \mathcal{S}_{\beta}$. From \eqref{alpha_coordinates_surely_0} and \eqref{rule_1}, we have:  
\begin{equation}
\Prob[\alpha \to \beta_j = 1]
=p_1 \quad \text{for each } j\in[k-1].\label{transition_1_to_k-1}
\end{equation}
Due to \eqref{alpha_coordinates_surely_0} and since $\alpha_{k+m-1}$ could be either $0$ or $1$ when each of $p_1$ and $p_2$ is in $(0,1)$, we have
\begin{equation}\label{transition_k}
\Prob[\alpha \to \beta_k = 1]
= \begin{cases} 
   p_1 & \text{when } \alpha_{k+m-1}=0,\\
   1-p_2 & \text{ when } \alpha_{k+m-1}=1.
  \end{cases}
\end{equation}
Combining these findings, we obtain the desired result. We refer the reader to \cref{fig_1} for an illustration.
\end{proof}

As a corollary to \cref{lem:transition_probabilities_first_k_coordinates}, we obtain:
\begin{corollary}\label{subsec:ell_less_equal_m-2}
When $\beta\neq 0^{n}$ and $\ell \leqslant (m-2)$, we have $\Prob[\alpha\rightarrow\beta_{[k+\ell]}] = p_1^{k}(1-p_1)^{\ell}$.
\end{corollary}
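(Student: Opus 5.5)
The plan is to get this directly from \cref{lem:transition_probabilities_first_k_coordinates} together with the constraints \eqref{alpha_coordinates_surely_0} on $\alpha\in\mathcal{S}_{\beta}$. The claimed value splits as the factor $p_1^k$ from the cells in $[k]$, times a factor $(1-p_1)$ for each of the $\ell$ cells $k+1,\dots,k+\ell$. So I will establish two facts: first, $\alpha_{k+m-1}=0$, which puts us in the first case of \cref{lem:transition_probabilities_first_k_coordinates}; second, every cell $j\in[k+1,k+\ell]$ falls under rule \eqref{rule_1}. Combining them through the product \eqref{partial_transition_probability} gives the result. Throughout I use the standing assumption $n\geqslant m$.

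\emph{Step 1: $\alpha_{k+m-1}=0$.} I split into two cases according to whether $\beta$ has more than one block.
\begin{itemize}
\item If $\beta$ has at least two blocks, then $k+\ell+1\leqslant n$. Since $\ell\leqslant m-2$, we get $k+\ell+1\leqslant k+m-1\leqslant k+\ell+m-1$, so the cell $k+m-1$ lies in the set $\{k+\ell+1,\dots,k+\ell+m-1\}$, on which $\alpha$ vanishes by \eqref{alpha_coordinates_surely_0}.
\item If $\beta$ is a single block, then $k+\ell=n$, and the cell $k+m-1$ is, modulo $n$, the cell $m-1-\ell$. This lies in $[1,k]$, because $1\leqslant m-1-\ell$ and $m-1-\ell\leqslant k$ is equivalent to $n\geqslant m-1$. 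Again $\alpha$ vanishes there by \eqref{alpha_coordinates_surely_0}.
\end{itemize}
In either case \cref{lem:transition_probabilities_first_k_coordinates} gives $\Prob[\alpha\rightarrow\beta_{[k]}]=p_1^k$.

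\emph{Step 2: each cell $j\in[k+1,k+\ell]$ sees the window $0^m$.} Fix such a $j$; its window is $\{j,\dots,j+m-1\}\subseteq\{k+1,\dots,k+\ell+m-1\}$. The key observation is that the two zero ranges in \eqref{alpha_coordinates_surely_0} overlap or abut. Indeed, $\{k+1,\dots,k+m-2\}$ and $\{k+\ell+1,\dots,k+\ell+m-1\}$ together cover the whole interval $\{k+1,\dots,k+\ell+m-1\}$, precisely because $\ell\leqslant m-2$ gives $k+m-2\geqslant k+\ell$. Hence $\alpha$ is $0^m$ on the window of $j$. By \eqref{rule_1}, $\Prob[\alpha\to\beta_j=0]=1-p_1$.

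In the single-block case I need to repeat this argument modulo $n$. There the constraint on $\{k+\ell+1,\dots,k+\ell+m-1\}$ is not part of \eqref{alpha_coordinates_surely_0}. Instead, the indices beyond $n$ wrap to $\{1,\dots,j+m-1-n\}$, and $j+m-1-n\leqslant m-1\leqslant k+m-2$. So these wrapped cells lie in $[1,k+m-2]$, where $\alpha$ vanishes by \eqref{alpha_coordinates_surely_0}, and the conclusion of Step 2 still holds.

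The only delicate point I anticipate is this wrap-around bookkeeping in the single-block case, namely checking that every wrapped index still lands in the zero set $[1,k+m-2]$ given by \eqref{alpha_coordinates_surely_0}. The two-block case is a direct interval computation. Multiplying $p_1^k$ from Step 1 by the factor $(1-p_1)^\ell$ from Step 2 yields $\Prob[\alpha\rightarrow\beta_{[k+\ell]}]=p_1^k(1-p_1)^\ell$.
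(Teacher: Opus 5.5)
Your proposal is correct and follows the paper's proof. As there, $\ell\leqslant m-2$ makes the two zero ranges in \eqref{alpha_coordinates_surely_0} cover $\{1,\ldots,k+\ell+m-1\}$, so $\alpha_{k+m-1}=0$ gives $p_1^k$ via \cref{lem:transition_probabilities_first_k_coordinates}, and rule \eqref{rule_1} gives a factor $(1-p_1)$ for each of the $\ell$ zero cells. Your separate wrap-around check in the single-block case is valid but more than needed: there $n=k+\ell\leqslant k+m-2$ already forces $\alpha=0^{n}$.
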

\begin{proof}
Since $\ell \leqslant (m-2)$, we have $(k+\ell+1)\leqslant (k+m-1)$, which, along with \eqref{alpha_coordinates_surely_0}, yields \begin{equation}
\alpha_{i}=0 \text{ for each } i\in\{1,2,\ldots,k,k+1,\ldots,k+m-2,k+m-1,\ldots,k+\ell+m-1\}.\nonumber
\end{equation} 
for any $\alpha\in\mathcal{S}_{\beta}$. As shown in \cref{lem:transition_probabilities_first_k_coordinates}, we have $\Prob[\alpha \to \beta_{j}=1] = p_{1}$ for each $j \in [k]$, while by \eqref{rule_1}, we have $\Prob[\alpha \to \beta_j = 0] = (1-p_1)$ for each $j \in [k+1,k+\ell]$. Substituting these in the expression appearing in \eqref{partial_transition_probability} for $j_{2}=s_{1}=(k+\ell)$ and $j_{1}=0$, we obtain $\Prob[\alpha\rightarrow\beta_{[k+\ell]}] = p_1^{k}(1-p_1)^{\ell}$.
\end{proof}

\subsubsection{A decomposition of $\alpha_{[k+1,k+\ell]}$ when $\ell \geqslant (m-1)$}
\label{subsec:ell_greater_m-2_beta_not_0} 
For any $\alpha \in \mathcal{S}_{\beta}$, we focus on $\alpha_{[k+1,k+\ell]}$.
From \eqref{alpha_coordinates_surely_0}, we have $\alpha_{i}=0$ for each $i \in [k+m-2]$.
This has been illustrated, for the reader's convenience, in \cref{fig_1}. 

\begin{figure}[h!]
  \centering
\fbox{    \includegraphics[width=0.8\textwidth]{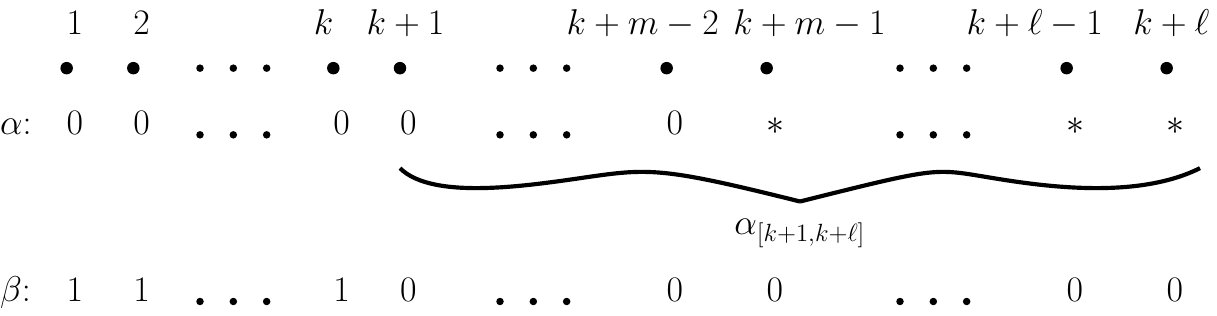}}
\caption{The part, $\alpha_{[k+1,k+\ell]}$, of $\alpha$ that we henceforth focus on}
  \label{fig_1}
\end{figure}

We now represent $\alpha_{[k+1,k+\ell]}$ as
\begin{equation}\label{alpha'_decomposed}
\alpha_{[k+1,k+\ell]}=0^{g_{1}}1^{h_{1}}0^{g_{2}}1^{h_{2}}\ldots 0^{g_{S}}1^{h_{S}}0^{g_{S+1}},
\end{equation}
where $S \in \mathbb{N}_{0}$, $g_{i} \in \mathbb{N}_{0}$ for each $i \in [S+1]$ and $h_{i} \in \mathbb{N}_{0}$ for each $i \in [S]$, and the following criteria are satisfied:
\begin{enumerate*}
\item $g_{1}\geqslant (m-2)$, and
\item $\sum_{i \in [S]} \left(g_{i}+h_{i}\right)+g_{S+1}=\ell$.
\end{enumerate*}
Note that if $\alpha_{[k+1,k+\ell]}=0^{\ell}$, we set $S=0$, so that $g_{1}=g_{S+1}=\ell$, whereas if $\alpha_{[k+1,k+\ell]}=0^{m-2}1^{\ell-m+2}$, we set $S=1$, $g_{1}=(m-2)$, $h_{1}=(\ell-m+2)$ and $g_{S+1}=g_{2}=0$. Evidently, the inequality $g_{S+1}>0$ implies that $\alpha_{[k+1,k+\ell]}$ ends in a $0$, whereas $h_{S}>0$ and $g_{S+1}=0$ together imply that $\alpha_{[k+1,k+\ell]}$ ends in a $1$. An example of the decomposition of $\alpha_{[k+1,k+\ell]}$, as given by \eqref{alpha'_decomposed}, has been illustrated in \cref{fig_2}.

\begin{figure}[h!]
  \centering
\fbox{    \includegraphics[width=0.8\textwidth]{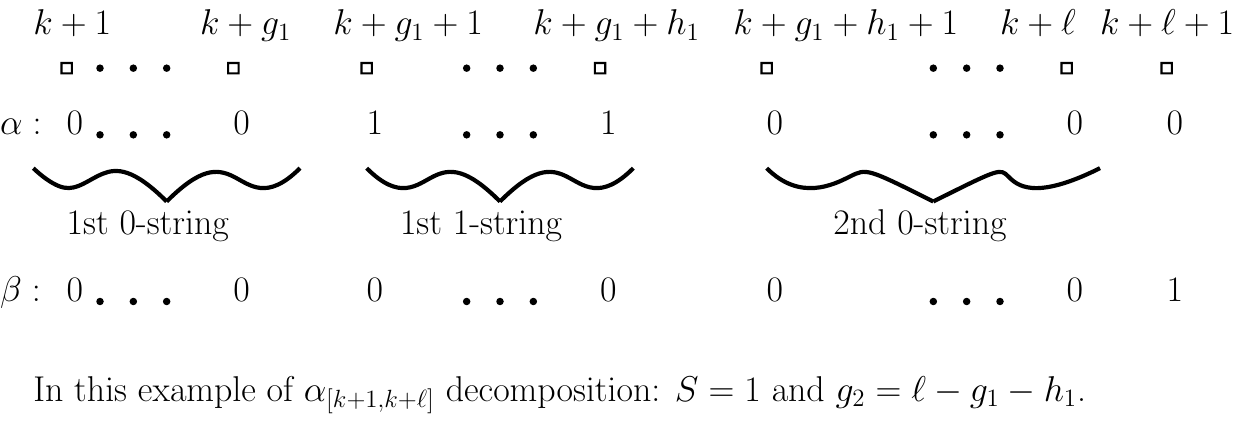}}
\caption{An example decomposition of $\alpha_{[k+1,k+\ell]}$ as given by \eqref{alpha'_decomposed}}
  \label{fig_2}
\end{figure}

It is immediate, from \eqref{alpha'_decomposed}, that $S$ equals $0$ iff $\alpha_{[k+1,k+\ell]}=0^{\ell}$. Similar to \eqref{sigma_i_defns_beta_all_0}, we set
\begin{equation}
\sigma_{0}=k \text{ and } 
\sigma_{i}=\sigma_{i-1}+\left(g_{i}+h_{i}\right) 
\text{ for each } i \in [S].\label{s'_defns}
\end{equation}
As before, we refer to the tuple $\left(\sigma_{i-1}+1,\sigma_{i-1}+2,\ldots,\sigma_{i-1}+g_{i}\right)$ as the $i$-th $0$-string of $\alpha_{[k+1,k+\ell]}$, and the length of this string equals $g_{i}$, for $i\in[S+1]$. 
Likewise, if $\alpha_{[k+1,k+\ell]}\neq 0^{\ell}$ (in which case $h_{i}\geqslant 1$ for each $i \in [S]$), we refer to $\left(\sigma_{i-1}+g_{i}+1,\ldots,\sigma_{i}\right)$ as the $i$-th $1$-string of $\alpha_{[k+1,k+\ell]}$, for each $i \in [S]$, and its length equals $h_{i}$. 

The lemma that follows summarizes the transition probabilities $\Prob\left[\alpha\to\beta_{j}\right]$ as the coordinate $j$ is allowed to vary over the various $1$-strings and $0$-strings of $\alpha_{[k+1,k+\ell]}$.
\begin{lemma}\label{lem:transition_probabilities_alpha_{[k+1,k+ell]}}
The transition probabilities corresponding to the $1$-strings of $\alpha_{[k+1,k+\ell]}$ are given by
\begin{equation}\label{transition_eq_1_strings}
\prod_{j\in\left[\sigma_{i-1}+g_{i}+1,\sigma_{i}\right]}\Prob[\alpha \to \beta_j=0]=1 \text{ for } i \in [S].
\end{equation}
On the other hand, when it comes to the $0$-strings of $\alpha_{[k+1,k+\ell]}$, we have:
\begin{equation}\label{transition_eq_0_strings}
\prod_{j\in\left[\sigma_{i-1}+1,\sigma_{i-1}+g_{i}\right]}\Prob[\alpha \to \beta_j = 0]=
\begin{cases}
1 &\text{ when } i\in[S] \text{ and } g_{i}\leqslant(m-2),\\
(1-p_1)^{g_{i}-m+1}p_2 &\text{ when } i\in[S] \text{ and } g_{i}\geqslant(m-1),\\
(1-p_1)^{g_{S+1}} &\text{ when } i=(S+1).
\end{cases}
\end{equation} 
\end{lemma}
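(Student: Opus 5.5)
I would prove \cref{lem:transition_probabilities_alpha_{[k+1,k+ell]}} site by site. In every case the target symbol is $\beta_j=0$ for $j\in[k+1,k+\ell]$. The only inputs are the update rules \eqref{rule_1}, \eqref{rule_2}, \eqref{rule_3}, the implication \eqref{necessary_cond_for_1}, and the constraint \eqref{alpha_coordinates_surely_0}. The argument parallels the proof of \cref{lem:transition_probabilities_beta_all_0}; the one new feature is the last $0$-string, which is not followed by a $1$ inside the window.

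\textbf{1-strings.} Take $j$ in the $i$-th $1$-string, so $\alpha_j=1$. The window $(\alpha_j,\dots,\alpha_{j+m-1})$ begins with a $1$, so it is neither $0^m$ nor $0^{m-1}1$ (here $m\geqslant 2$). Rule \eqref{rule_3} then gives $\beta_j=0$ with probability $1$, and multiplying over the string proves \eqref{transition_eq_1_strings}.

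\textbf{0-strings with $i\in[S]$.} Here $h_i\geqslant1$, so $\alpha_{\sigma_{i-1}+g_i+1}=1$.
\begin{enumerate*}
\item If $g_i\leqslant m-2$, each $j$ in the string has a $1$ among $\alpha_j,\dots,\alpha_{j+m-2}$. By \eqref{necessary_cond_for_1} the probability of producing a $1$ at $j$ is $0$, so each factor equals $1$.
\item If $g_i\geqslant m-1$, I split the string into three ranges. The first $g_i-m+1$ sites see the window $0^m$, contributing $(1-p_1)$ each by \eqref{rule_1}. The site $\sigma_{i-1}+g_i-m+2$ sees $0^{m-1}1$, contributing $p_2$ by \eqref{rule_2}. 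The last $m-2$ sites have a $1$ within distance $m-2$, contributing $1$ each. The product is $(1-p_1)^{g_i-m+1}p_2$.
\end{enumerate*}

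\textbf{The final 0-string, $i=S+1$.} This is the only real obstacle. The string runs up to $k+\ell$, and its behaviour depends on the sites beyond $k+\ell$, which lie outside the current window. By \eqref{alpha_coordinates_surely_0}, any $\alpha\in\mathcal{S}_\beta$ has $\alpha_{k+\ell+1}=\cdots=\alpha_{k+\ell+m-1}=0$. This holds whenever $\beta$ has at least two blocks, because then $\beta_{k+\ell+1}=1$ forces these zeros. Consequently, for every $j$ in the final string, $(\alpha_j,\dots,\alpha_{j+m-1})=0^m$, and each factor is $1-p_1$, giving $(1-p_1)^{g_{S+1}}$. The case $g_{S+1}=0$ gives an empty product, consistent with the formula.

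The single-block case needs separate care, since there $k+\ell=n$ and the sites beyond the window wrap around to $[1,k]$. Those sites are zero by \eqref{alpha_coordinates_surely_0} because $k\geqslant1$. I would also check that $k\geqslant m-1$, or otherwise that the wrapped sites up to $k+m-2$ are still covered by \eqref{alpha_coordinates_surely_0}; they are, since that statement lists all of $[1,k+m-2]$. So the window from any $j$ in the last string consists entirely of zeros in this case as well.

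Finally, the boundary convention $g_1\geqslant m-2$ is automatic, because $\alpha_{k+1},\dots,\alpha_{k+m-2}$ are all $0$. It plays no further role here beyond ensuring the decomposition \eqref{alpha'_decomposed} is well defined.
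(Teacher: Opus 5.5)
Your proposal is correct and follows essentially the same route as the paper. The $1$-strings and the non-final $0$-strings are handled site by site exactly as in \cref{lem:transition_probabilities_beta_all_0}, and the final $0$-string is handled by using \eqref{alpha_coordinates_surely_0} to see that every window there is $0^m$. Your separate treatment of the single-block wraparound, where the sites after $k+\ell$ are $1,\dots,m-1\subseteq[k+m-2]$, is a small piece of extra care that the paper leaves implicit in its citation of \eqref{alpha_coordinates_surely_0}.
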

\begin{proof}
When $\alpha_{[k+1,k+\ell]} \neq 0^{\ell}$, we focus on each $1$-string of $\alpha_{[k+1,k+\ell]}$, and deduce \eqref{transition_eq_1_strings}, using \eqref{necessary_cond_for_1}, the same way as we deduced \eqref{transition_eq_1_strings_beta_all_0}. 

Recall, from the discussion following \eqref{alpha'_decomposed}, that when $\alpha_{[k+1,k+\ell]}=0^{\ell}$, we have $S=0$, implying $[S]=\emptyset$, so that we need not consider $0$-strings belonging to the first two categories mentioned in \eqref{transition_eq_0_strings}. Moreover, the first two identities stated in \eqref{transition_eq_0_strings} can be established in exactly the same way as we have established the identities in \eqref{transition_eq_0_strings_beta_all_0}.

Finally, we consider the $0$-string $\left(\sigma_{S}+1,\sigma_{S}+2,\ldots,\sigma_{S}+g_{S+1}\right)$ that $\alpha_{[k+1,k+\ell]}$ ends with (if it at all ends in a $0$-string). From \eqref{alpha_coordinates_surely_0}, we have $\alpha_{i}=0$ for each $i \in [k+\ell+1,k+\ell+m-1]$. If now, in addition, we know that $\alpha_{[k+1,k+\ell]}$ ends in a $0$-string, so that $\alpha_{\sigma_{S}+1}=\alpha_{\sigma_{S}+2}=\cdots=\alpha_{\sigma_{S}+g_{S+1}} (=\alpha_{k+\ell})=0$, we have 
\begin{equation}
\alpha_{\sigma_{S}+1}=\alpha_{\sigma_{S}+2}=\cdots=\alpha_{\sigma_{S}+g_{S+1}}(=\alpha_{k+\ell})=\alpha_{k+\ell+1}=\alpha_{k+\ell+2}=\cdots=\alpha_{k+\ell+m-1}=0.\nonumber
\end{equation}
This yields, via \eqref{rule_1}: 
\begin{align}
\Prob[\alpha \to \beta_j = 0] = (1-p_1) \text{ for each }j \in \left\{\sigma_{S}+1,\sigma_{S}+2,\ldots,\sigma_{S}+g_{S+1}=k+\ell\right\},\nonumber
\end{align}
and upon taking the product over all such $j$, we obtain the third identity of \eqref{transition_eq_0_strings}. 

In order to illustrate how the second and third identities of \eqref{transition_eq_0_strings} work, we refer the reader to \cref{fig_3} and  \cref{fig_4} respectively (in each of these, the arrows indicate cell-wise transition probabilities).
\end{proof}

\begin{figure}[h!]
  \centering
\fbox{    \includegraphics[width=0.8\textwidth]{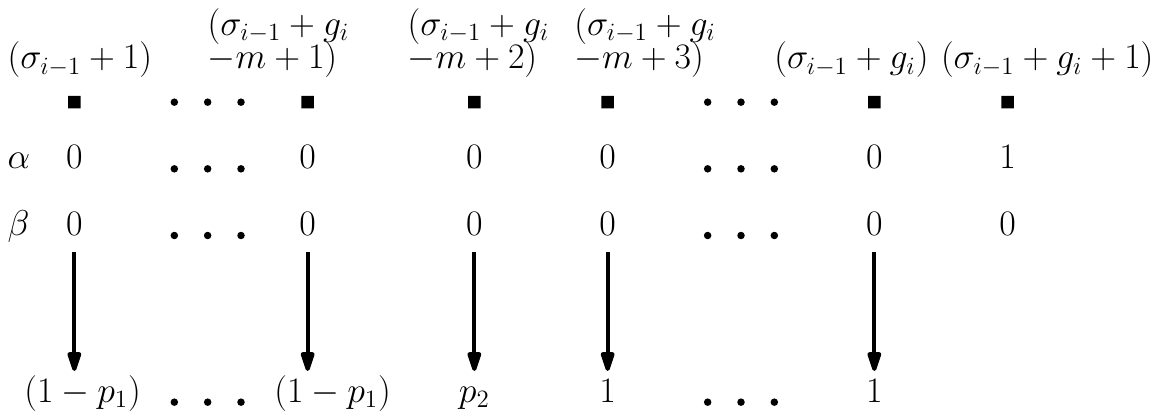}}
\caption{The transition probabilities arising out of a $0$-th string of $\alpha_{[k+1,k+\ell]}$, with length $g_{i}\geqslant(m-1)$, that $\alpha_{[k+1,k+\ell]}$ does not end with}
  \label{fig_3}
\end{figure}

\begin{corollary}\label{cor:0_length_first_block_geq_m-1}
When $\beta\neq 0^{n}$ and $\ell \geqslant (m-1)$, we have 
\begin{align}
\Prob[\alpha\rightarrow\beta_{[k+\ell]}]
= & p_1^{k-1}\left\{p_1\chi\left[g_{1}\geqslant m-1\right]+(1-p_2)\chi\left[g_{1}= m-2\right]\right\}\nonumber\\
& \times \prod_{i \in [S]} \chi\left[g_{i}\geqslant m-1\right]
\left\{(1-p_1)^{g_{i}-m+1}p_2\right\}(1-p_1)^{g_{S+1}}.\label{final_transition_first_block_case_1}
\end{align}   
\end{corollary}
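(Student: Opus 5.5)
The plan is to factor the block probability through \eqref{partial_transition_probability} as
\[
\Prob[\alpha\rightarrow\beta_{[k+\ell]}]=\Prob[\alpha\rightarrow\beta_{[k]}]\cdot\prod_{j\in[k+1,k+\ell]}\Prob[\alpha\to\beta_j=0],
\]
and then handle the two factors separately, using \cref{lem:transition_probabilities_first_k_coordinates} for the first and \cref{lem:transition_probabilities_alpha_{[k+1,k+ell]}} for the second.

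For the first factor, I need to translate the condition on $\alpha_{k+m-1}$ into a condition on $g_1$. By \eqref{alpha_coordinates_surely_0}, $\alpha_{k+1}=\cdots=\alpha_{k+m-2}=0$, so $g_1\geqslant m-2$ as required in \eqref{alpha'_decomposed}. Since $\ell\geqslant m-1$, the cell $k+m-1$ lies inside $[k+1,k+\ell]$. Hence $\alpha_{k+m-1}=1$ holds exactly when the first $0$-string has length $g_1=m-2$ and is followed by a $1$-string. Conversely, $\alpha_{k+m-1}=0$ holds exactly when $g_1\geqslant m-1$; this includes the case $S=0$, where $g_1=\ell\geqslant m-1$. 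Substituting into \cref{lem:transition_probabilities_first_k_coordinates} then produces the prefactor $p_1^{k-1}\{p_1\chi[g_1\geqslant m-1]+(1-p_2)\chi[g_1=m-2]\}$.

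For the second factor, I will split $[k+1,k+\ell]$ into the $0$-strings and $1$-strings of \eqref{alpha'_decomposed} and multiply the contributions from \cref{lem:transition_probabilities_alpha_{[k+1,k+ell]}}. Each $1$-string contributes $1$. The final $0$-string contributes $(1-p_1)^{g_{S+1}}$, which is $1$ when $g_{S+1}=0$. Each interior $0$-string with $i\in[S]$ contributes $(1-p_1)^{g_i-m+1}p_2$ when $g_i\geqslant m-1$, and contributes $1$ when $g_i\leqslant m-2$.

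The main obstacle is reconciling the interior strings with $g_i\leqslant m-2$ with the indicator $\chi[g_i\geqslant m-1]$ that appears in the displayed product. I expect to resolve this by reading the product, as in \eqref{transition_eq_13}, so that strings with $g_i\leqslant m-2$ contribute a factor $1$ rather than $0$. The notation $\prod\chi[\cdot]\{\cdot\}$ in \eqref{transition_eq_13} already abbreviates "take the factor when the indicator holds, otherwise $1$". Under that reading, the $i=1$ term with $g_1=m-2$ is also just $1$, which is consistent with the prefactor already carrying the $(1-p_2)$. Multiplying the two factors then yields \eqref{final_transition_first_block_case_1}.
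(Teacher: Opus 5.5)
Your proposal is correct and follows the paper's proof. The paper likewise notes that $g_1\geqslant m-1$ corresponds to $\alpha_{k+m-1}=0$ and $g_1=m-2$ to $\alpha_{k+m-1}=1$, then multiplies the outputs of \cref{lem:transition_probabilities_first_k_coordinates} and the string-by-string lemma for $\alpha_{[k+1,k+\ell]}$. Your reading of $\chi[g_i\geqslant m-1]$ as an exponent is the intended one: strings with $g_i\leqslant m-2$, including $i=1$ when $g_1=m-2$, contribute $1$ rather than $0$, exactly as the paper uses the formula in the proof of \cref{lem:identity}, where the exponent of $(1-p_1)$ is $\sum_{i\in[S]}(g_i-m+1)\chi[g_i\geqslant m-1]+g_{S+1}$.
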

\begin{proof}
We begin by noting that $g_{1}\geqslant (m-1)$ implies $\alpha_{k+m-1}=0$, whereas $g_{1}=(m-2)$ implies that $\alpha_{k+m-1}=1$. Keeping this in mind, and combining the findings deduced in \cref{lem:transition_probabilities_first_k_coordinates} and \cref{lem:transition_probabilities_alpha_{[k+1,k+ell]}}, we obtain the expression given in \eqref{final_transition_first_block_case_1} for the transition probability $\Prob[\alpha\rightarrow\beta_{[k+\ell]}]$ whenever $\ell \geqslant (m-1)$.
\end{proof}

\begin{figure}[h!]
  \centering
\fbox{    \includegraphics[width=0.8\textwidth]{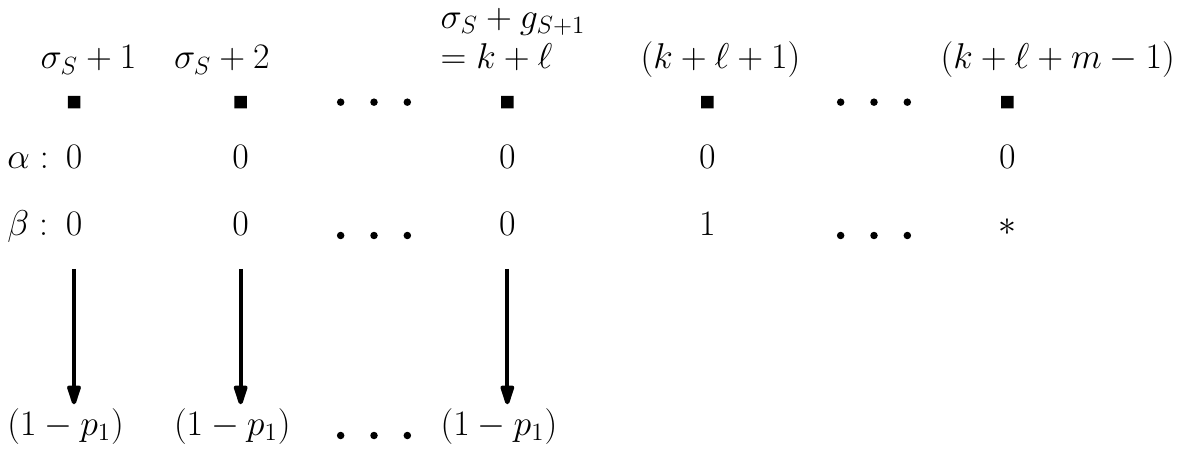}}
\caption{Transition probabilities arising out of the $0$-string of $\alpha_{[k+1,k+\ell]}$ that $\alpha_{[k+1,k+\ell]}$ ends with (if such a $0$-string exists)}
  \label{fig_4}
\end{figure}

\subsubsection{The final expression for $\Prob[\alpha\rightarrow\beta_{[k+\ell]}]$}
\label{subsec:express} 
Similar to the quantities introduced in \eqref{notation:1}, \eqref{notation:10^{r}1} and \eqref{notation:0^{r}1}, we may define, for any configuration $\alpha\in\Omega$, and any $j_{1}\in[n]\cup\{0\}$ and $j_{2}\in[n]$ with $j_{1}<j_{2}$, the following (recalling the notation introduced in \eqref{partial_configuration}):
\begin{enumerate}
\item the number of occurrences of the symbol $1$ in the tuple $\alpha_{[j_{1}+1,j_{2}]}$ is given by
\begin{equation}
N_{1}\left(\alpha_{[j_{1}+1,j_{2}]}\right)=\sum_{j\in[j_{1}+1,j_{2}]}\chi\left[\alpha_{j}=1\right];\nonumber
\end{equation}
\item the number of occurrences of the subsequence $10^{r}1$, for any $r\in\mathbb{N}$, in the tuple $\alpha_{[j_{1}+1,j_{2}]}$ is given by
\begin{equation}
N_{10^{r}1}\left(\alpha_{[j_{1}+1,j_{2}]}\right)=\sum_{j\in[j_{1}+1,j_{2}-r-1]}\chi\left[\left(\alpha_{j},\alpha_{j+1},\ldots,\alpha_{j+r+1}\right)=10^{r}1\right];\nonumber
\end{equation}
\item the number of occurrences of the subsequence $0^{r}1$, for any $r\in\mathbb{N}$, in the tuple $\alpha_{[j_{1}+1,j_{2}]}$, is given by
\begin{equation}
N_{0^{r}1}\left(\alpha_{[j_{1}+1,j_{2}]}\right)=\sum_{j\in[j_{1}+1,j_{2}-r]}\chi\left[\left(\alpha_{j},\alpha_{j+1},\ldots,\alpha_{j+r}\right)=0^{r}1\right].\nonumber 
\end{equation}
\end{enumerate} 
As before, we abbreviate $\alpha_{[j_{1}+1,j_{2}]}$ further to simply $\alpha_{[j_{2}]}$ in each of the notations above when $j_{1}=0$.

Our goal, now, is to express the probabilities obtained in \cref{subsec:ell_less_equal_m-2} and \cref{cor:0_length_first_block_geq_m-1} uniformly in terms of $N_{1}\left(\alpha_{[k+\ell]}\right)$, $N_{10^{r}1}\left(\alpha_{[k+\ell]}\right)$ for $r\in[m-2]$, and $N_{0^{m-1}1}\left(\alpha_{[k+\ell]}\right)$. To this end, we define, for each $i\in[R]$, recalling the decomposition in \eqref{beta_decomposed}, and the definitions in \eqref{s_{i}_defns}:
\begin{equation}
\kappa_{i}=\ell_{i}-N_{1}\left(\alpha_{[s_{i-1}+1,s_{i}]}\right)-\sum_{r\in[m-2]}rN_{10^{r}1}\left(\alpha_{[s_{i-1}+1,s_{i}]}\right)-(m-1)N_{0^{m-1}1}\left(\alpha_{[s_{i-1}+1,s_{i}]}\right).\label{kappa_{i}_defn}
\end{equation}
We now define 
\begin{align}\label{kappa_defn}
\kappa={}&\sum_{i\in[R]}\kappa_{i}=\sum_{i \in [R]}\ell_{i}-N_{1}(\alpha)-\sum_{r\in[m-2]}r N_{10^{r}1}(\alpha)-(m-1)N_{0^{m-1}1}(\alpha)\nonumber\\
={}&n-N_{1}(\beta)-N_{1}(\alpha)-\sum_{r\in[m-2]}r N_{10^{r}1}(\alpha)-(m-1)N_{0^{m-1}1}(\alpha),
\end{align}
since $N_{1}(\beta)=\sum_{i\in[R]}k_{i}$ from \eqref{beta_decomposed}.

Recall, from \eqref{beta_first_block} and the paragraph preceding it, that we replace $k_{1}$ by $k$ and $\ell_{1}$ by $\ell$ when focusing on the first block of $\beta$ (as given by the decomposition in \eqref{beta_decomposed}). With this altered notation in mind, we state and prove the following lemma:
\begin{lemma}
\label{lem:identity}
Whenever $k, \ell \geqslant 1$, we have, with $\kappa_{1}$ as defined in \eqref{kappa_{i}_defn}:
\begin{align}
\Prob[\alpha\rightarrow\beta_{[k+\ell]}]={}&p_1^{k}(1-p_1)^{\kappa_{1}}p_2^{N_{0^{m-1}1}\left(\alpha_{[k+\ell]}\right)}\Bigg\{\frac{(1-p_1)(1-p_2)}{p_1p_2}\chi\left[\ell\geqslant m-1,\alpha_{k+m-1}=1\right]\nonumber\\&+\chi\left[\ell\geqslant m-1,\alpha_{k+m-1}=0\right]+\chi\left[\ell\leqslant m-2\right]\Bigg\}.\label{eq:first_block_transition_probability_final}
\end{align}
\end{lemma}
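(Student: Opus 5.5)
The plan is to split into the two cases already treated separately: $\ell\leqslant m-2$ (\cref{subsec:ell_less_equal_m-2}) and $\ell\geqslant m-1$ (\cref{cor:0_length_first_block_geq_m-1}). In each case I will compute the counting quantities $N_{1}$, $N_{10^{r}1}$ and $N_{0^{m-1}1}$ restricted to the window $\alpha_{[k+\ell]}$, using the structural information forced by $\alpha\in\mathcal{S}_{\beta}$. I will then check that the right side of \eqref{eq:first_block_transition_probability_final} collapses to the already-known transition probability. Throughout I use \eqref{alpha_coordinates_surely_0}: $\alpha_i=0$ for $i\in[k+m-2]$ and for $i\in[k+\ell+1,k+\ell+m-1]$.

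\textbf{Case $\ell\leqslant m-2$.} Here \eqref{alpha_coordinates_surely_0} forces $\alpha_{[k+\ell]}=0^{k+\ell}$. Hence every windowed count vanishes, so $\kappa_1=\ell$ and $N_{0^{m-1}1}(\alpha_{[k+\ell]})=0$. The bracket equals $\chi[\ell\leqslant m-2]=1$, and the right side becomes $p_1^k(1-p_1)^\ell$, matching \cref{subsec:ell_less_equal_m-2}.

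\textbf{Case $\ell\geqslant m-1$.} I use the decomposition \eqref{alpha'_decomposed}. The first $k$ sites of $\alpha$ are $0$, so $\alpha_{[k+\ell]}=0^{k+g_1}1^{h_1}\cdots 0^{g_S}1^{h_S}0^{g_{S+1}}$, and $N_1(\alpha_{[k+\ell]})=\sum_{i\in[S]} h_i$. The windowed occurrences are classified as follows.
\begin{itemize}
\item Each $0$-string with index $i\in[S]$ and $i\geqslant 2$ is preceded by a $1$ and followed by a $1$. It contributes one $10^{g_i}1$ if $g_i\leqslant m-2$, and one $0^{m-1}1$ if $g_i\geqslant m-1$.
\item The first $0$-string has length $k+g_1\geqslant m-1$, since $k\geqslant1$ and $g_1\geqslant m-2$. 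If $S\geqslant1$ it contributes one $0^{m-1}1$ and no $10^r1$, because the pattern $10^r1$ would need a $1$ before it inside the window.
\item The final string $0^{g_{S+1}}$ contributes nothing, since the window ends before the next $1$.
\end{itemize}
Substituting into \eqref{kappa_{i}_defn} gives
\[
\kappa_1=\sum_{i\in[S],\,g_i\geqslant m-1,\,i\geqslant2}(g_i-m+1)+g_{S+1}+\bigl(k+g_1-(m-1)\bigr)-k ,
\]
with the first-string term read appropriately when $S=0$, which I treat separately below. Also $N_{0^{m-1}1}(\alpha_{[k+\ell]})=\#\{i\in[S]:g_i\geqslant m-1\}$, where the $i=1$ string always counts when $S\geqslant1$.

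Next I compare this with \eqref{final_transition_first_block_case_1}. Note that $\chi[g_1\geqslant m-1]$ in that corollary is the same as $\alpha_{k+m-1}=0$. I split into three subcases.
\begin{itemize}
\item If $g_1\geqslant m-1$, the first string contributes $(1-p_1)^{g_1-m+1}p_2$. This is exactly what the formula produces from the extra exponent $g_1-m+1$ in $\kappa_1$ and one factor of $p_2$, with bracket equal to $1$.
\item If $g_1=m-2$, the corollary contributes $p_1^{k-1}(1-p_2)$ and no factor from the first string. The formula instead gives $p_1^k(1-p_1)^{-1}p_2$ times the bracket value $\frac{(1-p_1)(1-p_2)}{p_1p_2}$, which equals $p_1^{k-1}(1-p_2)$. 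This factor is exactly what the bracket was designed to fix.
\item If $S=0$, the window is all zeros, so $\kappa_1=\ell$ and there are no $p_2$ factors. Since $\ell\geqslant m-1$ we have $\alpha_{k+m-1}=0$, and the product is $p_1^k(1-p_1)^{\ell}$, consistent with the corollary since $g_{S+1}=\ell$.
\end{itemize}

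The main obstacle is the bookkeeping at the boundaries of the window. One issue is whether the leading $0^{k+g_1}$ string counts as a $0^{m-1}1$ even though its $0$s partly lie in the $1$-region of $\beta$. Another is making sure no $10^r1$ or $0^{m-1}1$ straddles position $k+\ell$. The windowed definitions of $N$ only count patterns fully inside $[1,k+\ell]$, and $\alpha_{[k+\ell+1,k+\ell+m-1]}$ is zero, so straddling is harmless there. For the leading string I will carefully verify the exponent: its $k+g_1$ zeros give $k+g_1-(m-1)$, and after subtracting the $k$ that belong to the $1$-block of $\beta$ via $\ell$, this leaves $g_1-m+1$, which equals $-1$ exactly when $g_1=m-2$.
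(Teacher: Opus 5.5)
Your proposal is correct and is essentially the paper's proof. The paper also splits into $\ell\leqslant m-2$, into $\ell\geqslant m-1$ with $g_1=m-2$, and into $\ell\geqslant m-1$ with $g_1\geqslant m-1$, and matches exponents of $(1-p_1)$ and $p_2$ against the corollary; you compute $\kappa_1$ directly from the windowed counts (treating $0^{k+g_1}$ as one string) where the paper rewrites the corollary's exponent, and your separate handling of $S=0$ is a small improvement, since the paper's case (A3) tacitly uses $\alpha_{k+g_1+1}=1$, which fails there.
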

\begin{proof}
To prove this, we consider the following possible cases:
\begin{enumerate}[label=(A\arabic*), ref=A\arabic*]
\item \label{A1} $\ell\leqslant (m-2)$,
\item \label{A2} $\ell \geqslant (m-1)$ and $g_{1}=(m-2)$ (the second criterion is equivalent to $\alpha_{k+m-1}=1$), and 
\item \label{A3} $\ell\geqslant (m-1)$ and $g_{1}\geqslant (m-1)$ (the second criterion is equivalent to $\alpha_{k+m-1}=0$).
\end{enumerate}
The proof for each of these cases has been written down separately.

\textbf{Proof of (A1):} From the results in \cref{subsec:ell_less_equal_m-2},
we know that when $\ell\leqslant(m-2)$, we have $\alpha_{1}=\alpha_{2}=\cdots=\alpha_{k+\ell+m-1}=0$. 
Consequently, each of $N_{1}\left(\alpha_{[k+\ell]}\right)$, $N_{10^{r}1}\left(\alpha_{[k+\ell]}\right)$ for all $r\in[m-2]$, and $N_{0^{m-1}1}\left(\alpha_{[k+\ell]}\right)$, equals $0$. 
The expression on the right side of \eqref{eq:first_block_transition_probability_final} thus boils down to simply $p_1^{k}(1-p_1)^{\ell}$, which is exactly what $\Prob[\alpha\rightarrow\beta_{[k+\ell]}]$ was shown to be equal to when $\ell\leqslant(m-2)$ in \cref{subsec:ell_less_equal_m-2}

\textbf{Proof of (A2):} From \eqref{final_transition_first_block_case_1}, we see that, when $\ell \geqslant (m-1)$ and $g_{1}=(m-2)$, we have
\begin{align}
\Prob[\alpha\rightarrow\beta_{[k+\ell]}]={}&p_1^{k-1}(1-p_2)\prod_{i \in [S]}\left\{(1-p_1)^{g_{i}-m+1}p_2\chi\left[g_{i}\geqslant (m-1)\right]\right\}(1-p_1)^{g_{S+1}}.\label{final_transition_first_block_case_1_subcase_1}
\end{align} 
The exponent of $(1-p_1)$ in \eqref{final_transition_first_block_case_1_subcase_1} is given by (using the fact that $g_{1}=(m-2)$): 
\begin{align}
{}&\sum_{i\in[S]}\left(g_{i}-m+1\right)\chi\left[g_{i}\geqslant(m-1)\right]+g_{S+1}=\sum_{i\in[S]\setminus\{1\}}\left(g_{i}-m+1\right)\chi\left[g_{i}\geqslant(m-1)\right]+g_{S+1}\nonumber\\
={}&\sum_{i\in[S]\setminus\{1\}}g_{i}\chi\left[g_{i}\geqslant(m-1)\right]-(m-1)\sum_{i\in[S]\setminus\{1\}}\chi\left[g_{i}\geqslant(m-1)\right]+g_{S+1}\nonumber\\
={}&\sum_{i\in[S]}g_{i}-\sum_{i\in[S]}g_{i}\chi\left[g_{i}\leqslant(m-2)\right]-(m-1)\sum_{i\in[S]\setminus\{1\}}\chi\left[g_{i}\geqslant(m-1)\right]+g_{S+1}\nonumber\\
={}&\sum_{i\in [S+1]}g_{i}-\sum_{i\in[S]}g_{i}\chi\left[g_{i}\leqslant(m-2)\right]-(m-1)\sum_{i\in[S]\setminus\{1\}}\chi\left[g_{i}\geqslant(m-1)\right]\nonumber\\
={}&\ell-N_{1}\left(\alpha_{[k+\ell]}\right)-(m-2)-\sum_{i\in[S]\setminus\{1\}}g_{i}\chi\left[g_{i}\leqslant(m-2)\right]-(m-1)\sum_{i\in[S]\setminus\{1\}}\chi\left[g_{i}\geqslant(m-1)\right],\label{exponent_(1-p_1)_initial}
\end{align}
where the identity $\sum_{i\in [S+1]}g_{i}=\ell-N_{1}\left(\alpha_{[k+\ell]}\right)$ follows from the representation of $\alpha_{[k+1,k+\ell]}$ shown in \eqref{alpha'_decomposed}, and since $\alpha_{1}=\alpha_{2}=\cdots=\alpha_{k}=0$, as stated in \eqref{alpha_coordinates_surely_0}. As explained right after \eqref{alpha'_decomposed}, 
\begin{align}
\left(\alpha_{\sigma_{i}},\alpha_{\sigma_{i}+1},\ldots,\alpha_{\sigma_{i}+g_{i+1}},\alpha_{\sigma_{i}+g_{i+1}+1}\right)=10^{g_{i+1}}1 \quad \text{for each } i\in[S-1],\label{10^{r}1_subsequences_alpha'}
\end{align}
which tells us that the fourth term of \eqref{exponent_(1-p_1)_initial} can be expressed as
\begin{align}
{}&\sum_{i\in[S]\setminus\{1\}}g_{i}\chi\left[g_{i}\leqslant(m-2)\right]=\sum_{r\in[m-2]}\sum_{i\in[S]\setminus\{1\}:g_{i}=r}g_{i}\nonumber\\
={}&\sum_{r\in[m-2]}r N_{10^{r}1}\left(\alpha_{[k+1,k+\ell]}\right)=\sum_{r\in[m-2]}r N_{10^{r}1}\left(\alpha_{[k+\ell]}\right),\label{exponent_(1-p_1)_part_1}
\end{align}
where the last equality follows from observing that, since $\alpha_{1}=\alpha_{2}=\cdots=\alpha_{k}=\alpha_{k+1}=\cdots=\alpha_{k+m-2}=0$ (from \eqref{alpha_coordinates_surely_0}), we have $N_{10^{r}1}\left(\alpha_{[k+1,k+\ell]}\right)=N_{10^{r}1}\left(\alpha_{[k+\ell]}\right)$ for each $r\in[m-2]$. Next, note that since $g_{1}=m-2$, we have $\alpha_{k+m-1}=1$, which, along with \eqref{alpha_coordinates_surely_0}, implies that 
\begin{equation}
\left(\alpha_{k},\alpha_{k+1},\ldots,\alpha_{k+m-2},\alpha_{k+m-1}\right)=0^{m-1}1.\label{extra_0^{m-1}1_subsequence_beginning_of_alpha'}
\end{equation}
Note that this is one of the steps in our argument where we crucially make use of our assumption that $\beta\neq 0^{n}$, as it ensures that $k\geqslant 1$. Next, from \eqref{10^{r}1_subsequences_alpha'}, it is immediate that
\begin{equation}
\left(\alpha_{\sigma_{i}+g_{i+1}-m+2},\ldots,\alpha_{\sigma_{i}+g_{i+1}},\alpha_{\sigma_{i}+g_{i+1}+1}\right)=0^{m-1}1 \quad \text{for each } i\in[S] \text{ with } g_{i+1}\geqslant(m-1).\label{0^{m-1}1_subsequences_alpha'}
\end{equation}
From \eqref{extra_0^{m-1}1_subsequence_beginning_of_alpha'} and \eqref{0^{m-1}1_subsequences_alpha'}, we conclude that 
\begin{equation}
N_{0^{m-1}1}\left(\alpha_{[k+\ell]}\right)=1+\left|\left\{i\in[S]:g_{i+1}\geqslant(m-1)\right\}\right|=1+N_{0^{m-1}1}\left(\alpha_{[k+1,k+\ell]}\right).\label{one_extra_0^{m-1}1}
\end{equation}
Combining the third and fifth terms of \eqref{exponent_(1-p_1)_initial}, and applying the observation in \eqref{one_extra_0^{m-1}1}, we obtain:
\begin{align}
-(m-2)-(m-1)\sum_{i\in[S]\setminus\{1\}}\chi\left[g_{i}\geqslant(m-1)\right]=-(m-1)N_{0^{m-1}1}\left(\alpha_{[k+\ell]}\right)+1.\label{exponent_(1-p_1)_part_2}
\end{align}
Substituting the expressions obtained in \eqref{exponent_(1-p_1)_part_1} and \eqref{exponent_(1-p_1)_part_2}, in \eqref{exponent_(1-p_1)_initial}, the exponent of $(1-p_1)$ in \eqref{final_transition_first_block_case_1_subcase_1} can be rewritten as:
\begin{equation}
\ell-N_{1}\left(\alpha_{[k+\ell]}\right)-\sum_{r\in[m-2]}r N_{10^{r}1}\left(\alpha_{[k+\ell]}\right)-(m-1)N_{0^{m-1}1}\left(\alpha_{[k+\ell]}\right)+1.\label{exponent_(1-p_1)_final}
\end{equation}
Next, the exponent of $p_2$ in \eqref{final_transition_first_block_case_1_subcase_1} equals, by the observations made in \eqref{extra_0^{m-1}1_subsequence_beginning_of_alpha'} and \eqref{0^{m-1}1_subsequences_alpha'}:
\begin{equation}
\sum_{i\in[S]}\chi\left[g_{i}\geqslant(m-1)\right]=N_{0^{m-1}1}\left(\alpha_{[k+\ell]}\right)-1.\label{exponent_p_2_final}
\end{equation}
Incorporating \eqref{exponent_(1-p_1)_final} and \eqref{exponent_p_2_final} into \eqref{final_transition_first_block_case_1_subcase_1}, we see that $\Prob[\alpha\rightarrow\beta_{[k+\ell]}]$ indeed satisfies \eqref{eq:first_block_transition_probability_final} when $\ell\geqslant(m-1)$ and $g_{1}=(m-2)$ (equivalently, $\ell\geqslant(m-1)$ and $\alpha_{k+m-1}=1$). This completes the proof for the case \eqref{A2}.

\textbf{Proof of (A3):} From \eqref{final_transition_first_block_case_1}, we see that when $\ell \geqslant (m-1)$ and $g_{1}\geqslant(m-1)$, we have
\begin{align}
\Prob[\alpha\rightarrow\beta_{[k+\ell]}]={}&p_1^{k}\prod_{i \in [S]}\left\{(1-p_1)^{g_{i}-m+1}p_2\chi\left[g_{i}\geqslant (m-1)\right]\right\}(1-p_1)^{g_{S+1}}.\label{final_transition_first_block_case_1_subcase_2}
\end{align}
The exponent of $(1-p_1)$ in \eqref{final_transition_first_block_case_1_subcase_2} equals (keeping in mind that $g_{1}\geqslant(m-1)$ here):
\begin{align}
{}&\sum_{i\in[S]}\left(g_{i}-m+1\right)\chi\left[g_{i}\geqslant (m-1)\right]+g_{S+1}\nonumber\\
={}&\ell-N_{1}\left(\alpha_{[k+\ell]}\right)-\sum_{i\in[S]\setminus\{1\}}g_{i}\chi\left[g_{i}\leqslant (m-2)\right]-(m-1)\sum_{i\in[S]}\chi\left[g_{i}\geqslant (m-1)\right],\label{exponent_(1-p_1)_initial_1}
\end{align}
with the final expression having been derived the same way as \eqref{exponent_(1-p_1)_initial}. To begin with, we note that, as proved in \eqref{exponent_(1-p_1)_part_1}, here too, we have $\sum_{i\in[S]\setminus\{1\}}g_{i}\chi\left[g_{i}\leqslant (m-2)\right]=\sum_{r\in[m-2]}r N_{10^{r}1}\left(\alpha_{[k+\ell]}\right)$. 

Next, from \eqref{alpha'_decomposed} and \eqref{alpha_coordinates_surely_0}, we get $\alpha_{1}=\alpha_{2}=\cdots=\alpha_{k}=\alpha_{k+1}=\cdots=\alpha_{k+g_{1}}=0$ and $\alpha_{k+g_{1}+1}=1$, so that
\begin{equation}
\left(\alpha_{k+g_{1}-m+2},\alpha_{k+g_{1}-m+3},\ldots,\alpha_{k+g_{1}},\alpha_{k+g_{1}+1}\right)=0^{m-1}1.\nonumber
\end{equation}
Since $g_{1}\geqslant (m-1)$, it implies that $k+g_{1}-m+2\geqslant k+1$, so that the above-mentioned occurrence of the subsequence $0^{m-1}1$ is contained entirely within $\alpha_{[k+1,k+\ell]}$ (unlike the occurrence of the subsequence $0^{m-1}1$ mentioned in \eqref{extra_0^{m-1}1_subsequence_beginning_of_alpha'} when $\ell\geqslant(m-1)$ and $g_{1}=(m-2)$). This observation, along with the observation made in \eqref{0^{m-1}1_subsequences_alpha'} and the fact that $\alpha_{1}=\alpha_{2}=\cdots=\alpha_{k}=0$ (from \eqref{alpha_coordinates_surely_0}), yields 
\begin{equation}
N_{0^{m-1}1}\left(\alpha_{[k+\ell]}\right)=N_{0^{m-1}1}\left(\alpha_{[k+1,k+\ell]}\right)=\left|\left\{i\in[S]:g_{i}\geqslant(m-1)\right\}\right|\label{0^{m-1}1_count_alpha_{[k+ell]}_no_extra}
\end{equation}
when $\ell\geqslant (m-1)$ and $g_{1}\geqslant(m-1)$. Incorporating these observations into \eqref{exponent_(1-p_1)_initial_1}, the exponent of $(1-p_1)$ in \eqref{final_transition_first_block_case_1_subcase_2} can be represented as follows:
\begin{equation}
\ell-N_{1}\left(\alpha_{[k+\ell]}\right)-\sum_{r\in[m-2]}r N_{10^{r}1}\left(\alpha_{[k+\ell]}\right)-(m-1)N_{0^{m-1}1}\left(\alpha_{[k+\ell]}\right).\label{exponent_(1-p_1)_final_1}
\end{equation}

The exponent of $p_2$ in \eqref{final_transition_first_block_case_1_subcase_2} equals, via \eqref{0^{m-1}1_count_alpha_{[k+ell]}_no_extra}, 
\begin{equation}
\sum_{i\in[S]}\chi\left[g_{i}\geqslant(m-1)\right]=N_{0^{m-1}1}\left(\alpha_{[k+\ell]}\right).\label{exponent_p_2_final_1}
\end{equation}
Incorporating \eqref{exponent_(1-p_1)_final_1} and \eqref{exponent_p_2_final_1} into \eqref{final_transition_first_block_case_1_subcase_2}, we see that $\Prob[\alpha\rightarrow\beta_{[k+\ell]}]$ indeed satisfies \eqref{eq:first_block_transition_probability_final} when $\ell\geqslant(m-1)$ and $g_{1}\geqslant(m-1)$ (equivalently, $\ell\geqslant(m-1)$ and $\alpha_{k+m-1}=0$). This completes the proof for the case \eqref{A3}.

Since we have written down the proof in all three cases, we have completed the proof of \cref{lem:identity}.
\end{proof}

\subsubsection{Verification of the master equation}\label{subsec:verification_master_eq} 
We can extend \cref{lem:identity} to 
$\Prob[\alpha\rightarrow\beta_{[s_{i-1}+1,s_{i}]}]$ for all $i\in[R]\setminus\{1\}$ (i.e.\ to all other blocks of $\beta$). 
In other words, we can write, for each $i\in[R]$, recalling $\kappa_{i}$ from \eqref{kappa_{i}_defn},
\begin{align}
\Prob[\alpha\rightarrow\beta_{[s_{i-1}+1,s_{i}]}]={}&p_1^{k_{i}}(1-p_1)^{\kappa_{i}}p_2^{N_{0^{m-1}1}\left(\alpha_{[s_{i-1}+1,s_{i}]}\right)}\Bigg\{\frac{(1-p_1)(1-p_2)}{p_1p_2}\chi\big[\ell_{i}\geqslant m-1,\alpha_{s_{i-1}+k_{i}+m-1}\nonumber\\
&=1\big]+\chi\left[\ell_{i}\geqslant m-1,\alpha_{s_{i-1}+k_{i}+m-1}=0\right]+\chi\left[\ell_{i}\leqslant m-2\right]\Bigg\}.\label{eq:ith_block_transition_probability_final}
\end{align}
Substituting the expression from \eqref{eq:ith_block_transition_probability_final} in the product in \eqref{transition_probability_decomposed}, and recalling $\kappa$ from \eqref{kappa_defn}, we obtain:
\begin{align}
\Prob[\alpha \rightarrow \beta]
={}&\left(\prod_{\substack{i \in [R]:\ell_{i} \geqslant m-1,\\\alpha_{s_{i-1}+k_{i}+m-1}=1}}
\frac{(1-p_1)(1-p_2)}{p_1p_2}\right)
p_1^{\sum_{i\in[R]}k_{i}}
p_2^{\sum_{i\in[R]}N_{0^{m-1}1}\left(\alpha_{[s_{i-1}+1,s_{i}]}\right)} 
(1-p_1)^{\kappa}\nonumber\\
={}&\left(\frac{(1-p_1)(1-p_2)}{p_1p_2}\right)^{\left|\left\{i \in [R]:\ell_{i}\geqslant m-1,\alpha_{s_{i-1}+k_{i}+m-1}=1\right\}\right|}p_1^{N_{1}(\beta)}p_2^{N_{0^{m-1}1}(\alpha)}(1-p_1)^{\kappa},\label{final_transition_probability_eq_beta_not_0}
\end{align}
where $\kappa$ is as defined in \eqref{kappa_defn}, and we use the identity $N_{1}(\beta)=\sum_{i\in[R]}k_{i}$. 
Substituting the expression for $\Prob[\alpha \rightarrow \beta]$ obtained in \eqref{final_transition_probability_eq_beta_not_0}, and the expression for $\pi(\alpha)$ as given by \eqref{limiting_distribution}, in the left side of \eqref{gen_identity}, we obtain \eqref{gen_identity_simplified_step} after simplification. This completes the proof of \cref{lem:gen identity}.

\subsection{Completion of the proof}
\label{subsec:complete}

\begin{proof}[Proof of \cref{thm:main_1}]
The idea, now, is to partition the set $\mathcal{S}_{\beta}$ into suitable equivalence classes and then split the sum in \eqref{gen_identity_simplified_step} into sums over these equivalence classes. Given configurations $\alpha$ and $\alpha'$ in $\mathcal{S}_{\beta}$, we say that $\alpha \sim \alpha'$ if 
\begin{equation}
\alpha_{j}=\alpha'_{j} \text{ for each } j \in 
\left\{s_{i-1}+k_{i}+m, \ldots, s_{i}-1 \right\}, \text{ for each } i \in [R] \text{ with } \ell_{i}> (m-1).\label{equiv_rel}
\end{equation}
Note that for any $\alpha \in \mathcal{S}_{\beta}$ and any $i \in [R]$, we have $\alpha_{s_{i-1}+1}=\alpha_{s_{i-1}+2}=\cdots=\alpha_{s_{i-1}+k_{i}}=\alpha_{s_{i-1}+k_{i}+1}=\cdots=\alpha_{s_{i-1}+k_{i}+m-2}=0$, which follows from \eqref{necessary_cond_for_1} and \eqref{beta_decomposed}. Consequently, for $\alpha, \alpha' \in \mathcal{S}_{\beta}$ with $\alpha \sim \alpha'$, the only cells in which the values of $\alpha$ and $\alpha'$ may differ are those indexed by $(s_{i-1}+k_{i}+m-1)$ for $i \in [R]$ with $\ell_{i}\geqslant(m-1)$. We call $\gamma \in \mathcal{S}_{\beta}$ a \emph{representative configuration} if 
\begin{equation}
\gamma_{s_{i-1}+k_{i}+m-1}=0 \text{ for each } i \in [R] \text{ with } \ell_{i}\geqslant(m-1).\label{representative_defn}
\end{equation}
The set $\mathcal{S}_{\beta}$ is partitioned into equivalence classes by the equivalence relation $\sim$, and it is evident that each such equivalence class contains \emph{precisely one} representative configuration. 

Suppose $\alpha$ belongs to the same equivalence class as a representative configuration $\gamma$, i.e.\ $\alpha \sim \gamma$.
Then, the only extra $1$s in $\alpha$ compared to $\gamma$ are those in the
sites $s_{i-1} + k_i + m-1$ for each $i \in [R]$ with $\ell_i > m-1$. Therefore,
\begin{align}
{}&\left(\frac{(1-p_1)(1-p_2)}{p_1p_2}\right)^{\left|\left\{i \in [R]:\ell_{i}\geqslant m-1,\alpha_{s_{i-1}+k_{i}+m-1}=1\right\}\right|}\left(\frac{p_1}{1-p_1}\right)^{N_{1}(\alpha)}\nonumber\\
={}&\left(\frac{1-p_2}{p_2}\right)^{\left|\left\{i \in [R]:\ell_{i}\geqslant m-1,\alpha_{s_{i-1}+k_{i}+m-1}=1\right\}\right|}
\left( \frac{p_1}{1-p_1} \right)^{N_{1}(\gamma)}.\label{replace_by_representative}
\end{align}

Letting $\Sigma_{\beta}$ denote the set of all representative configurations, and incorporating \eqref{replace_by_representative} into \eqref{gen_identity_simplified_step}, we can write:
\begin{align*}
&\sum_{\alpha \in\mathcal{S}_{\beta}}
\Prob[\alpha \rightarrow \beta] \pi(\alpha)
=\sum_{\gamma\in\Sigma_{\beta}}
\sum_{\substack{\alpha\in\mathcal{S}_{\beta} \\ \alpha\sim\gamma}}\Prob[\alpha\rightarrow\beta] \pi(\alpha)\nonumber\\
={}&\frac{p_1^{N_{1}(\beta)}(1-p_1)^{n-N_{1}(\beta)}}{Z_{n,m}}\sum_{\gamma\in\Sigma_{\beta}}
\left( \frac{p_1}{1-p_1} \right)^{N_{1}(\gamma)}
\sum_{\substack{\alpha\in\mathcal{S}_{\beta} \\ \alpha\sim\gamma}}
\left(\frac{1-p_2}{p_2}\right)^{\left|\left\{i \in [R]:\ell_{i}\geqslant m-1,\alpha_{s_{i-1}+k_{i}+m-1}=1\right\}\right|}.
\end{align*}
Let $a = \left|\left\{i \in [R]:\ell_{i}\geqslant m-1\right\}\right|$
and, for $\alpha \in \mathcal{S}_\beta$, let
$b_\alpha = \left|\left\{i \in [R]:\ell_{i}\geqslant m-1,\alpha_{s_{i-1}+k_{i}+m-1}=1\right\}\right|$. Note that $b_\alpha \leqslant a$ for all $\alpha \in \mathcal{S}_\beta$. 
Now, splitting the sum over all possible values of $b_\alpha$, we get
\begin{align*}
&\sum_{\alpha \in\mathcal{S}_{\beta}}
\Prob[\alpha \rightarrow \beta] \pi(\alpha)
=\frac{p_1^{N_{1}(\beta)}(1-p_1)^{n-N_{1}(\beta)}}{Z_{n,m}}\sum_{\gamma\in\Sigma_{\beta}}
\left( \frac{p_1}{1-p_1} \right)^{N_{1}(\gamma)}
\sum_{t=0}^{a}
\sum_{\substack{\alpha\in\mathcal{S}_{\beta}\\ \alpha\sim\gamma, b_\alpha = t}} 
\left(\frac{1-p_2}{p_2}\right)^{t}\nonumber\\
={}&\frac{p_1^{N_{1}(\beta)}(1-p_1)^{n-N_{1}(\beta)}}{Z_{n,m}}\sum_{\gamma\in\Sigma_{\beta}}
\left( \frac{p_1}{1-p_1} \right)^{N_{1}(\gamma)}
\sum_{t=0}^{a} \binom{a}{ t} \left(\frac{1-p_2}{p_2}\right)^{t}.
\end{align*}
Using the binomial theorem, we get
\begin{align*}
\sum_{\alpha \in\mathcal{S}_{\beta}}
\Prob[\alpha \rightarrow \beta] \pi(\alpha)
=\frac{p_1^{N_{1}(\beta)}(1-p_1)^{n-N_{1}(\beta)}}{p_2^{a} \, Z_{n,m}}\sum_{\gamma\in\Sigma_{\beta}}
\left( \frac{p_1}{1-p_1} \right)^{N_{1}(\gamma)}.
\end{align*}
Recall that, for \emph{any} representative configuration $\gamma$, the only cells that \emph{may} be occupied by $1$ are those with indices $j\in\{s_{i-1}+k_{i}+m,\ldots,s_{i}-1\}$, for $i\in[R]$ with $\ell_{i}>(m-1)$. Therefore, 
$N_{1}(\gamma)$ ranges from $0$ to $c = \sum_{i \in [R]} (\ell_{i}-m+1)_+$.
Next, the number of representative configurations $\gamma$ for which we have $N_{1}(\gamma)=t'$ is equal to the number of ways that $t'$ cells may be selected out of these cells 
and this can be done in  $\binom{c}{t'}$ ways. 
Thus, replacing the sum over $\gamma$ by a sum over $t'$, we get
\begin{align}
\sum_{\alpha \in\mathcal{S}_{\beta}}
\Prob[\alpha \rightarrow \beta] \pi(\alpha)
=&\frac{p_1^{N_{1}(\beta)}(1-p_1)^{n-N_{1}(\beta)}}{p_2^{a} \, Z_{n,m}}
\sum_{t'=0}^{c} \sum_{\substack{\gamma\in\Sigma_{\beta} \\ N_{1}(\gamma)=t'} }
\left(\frac{p_1}{1-p_1}\right)^{t'}\nonumber\\
={}&\frac{p_1^{N_{1}(\beta)}(1-p_1)^{n-N_{1}(\beta)}}{p_2^{a}\, Z_{n,m}}
\sum_{t'=0}^{c} \binom{c}{t'}
\left(\frac{p_1}{1-p_1}\right)^{t'}\nonumber\\
={}&\frac{p_1^{N_{1}(\beta)}(1-p_1)^{n-c-N_{1}(\beta)}}{p_2^{a}\, Z_{n,m}},\label{left_side_master_equation_simplified}
\end{align}
where we have again used the binomial theorem in the final step.

The final task is to represent the exponents of the various terms in \eqref{left_side_master_equation_simplified} in terms of $N_{1}(\beta)$, $N_{10^{r}1}(\beta)$ for $r\in[m-2]$, and $N_{0^{m-1}1}(\beta)$. We begin by noting that, for each $i\in[R]$, 
\begin{align}
{}&\left(\beta_{s_{i-1}+k_{i}},\beta_{s_{i-1}+k_{i}+1},\ldots,\beta_{s_{i}},\beta_{s_{i}+1}\right)=10^{r}1 \text{ whenever } \ell_{i}=r \text{ for some } r\in[m-2],\nonumber\\
{}&\left(\beta_{s_{i}-m+2},\beta_{s_{i}-m+3},\ldots,\beta_{s_{i}},\beta_{s_{i}+1}\right)=0^{m-1}1 \text{ whenever } \ell_{i}\geqslant(m-1).\nonumber
\end{align}
Using these observations, the exponent of $p_2$ in \eqref{left_side_master_equation_simplified} can be written as $a=-N_{0^{m-1}1}(\beta)$, and the exponent of $(1-p_1)$ in \eqref{left_side_master_equation_simplified} can be written as
\begin{align*}
n-N_{1}(\beta)-c ={}&n-N_{1}(\beta) +(m-1)a
-\sum_{i\in[R]:\ell_{i}\geqslant(m-1)}\ell_{i}\nonumber\\
={}&n-N_{1}(\beta) +(m-1)N_{0^{m-1}1}(\beta) 
-\sum_{i\in[R]}\ell_{i}+\sum_{i\in[R]:\ell_{i}\leqslant(m-2)}\ell_{i}.
\end{align*}
Now, notice that $n-N_{1}(\beta)-\sum_{i\in[R]}\ell_{i} = 
n-\sum_{i\in[R]}k_{i}-\sum_{i\in[R]}\ell_{i} = 0$. 
Thus, this exponent can be written as
\[
\sum_{r\in[m-2]} r \left|\left\{i\in[R]:\ell_{i}=r\right\}\right|+(m-1)N_{0^{m-1}1}(\beta) 
=\sum_{r\in[m-2]}r N_{10^{r}1}(\beta)+(m-1)N_{0^{m-1}1}(\beta).
\]
Incorporating these calculations
into \eqref{left_side_master_equation_simplified}, we obtain the expression for $\beta$ as given by \eqref{limiting_distribution}, completing the proof when 
$\beta \neq 0^n$.
\end{proof}

\section{The partition function and density}
\label{sec:proof_main_2}

\begin{proof}[Proof of \cref{thm:main_2}]
Note, from \eqref{limiting_distribution} of \cref{thm:main_1}, that 
\begin{align*}
Z_{n,m} \equiv {}&Z_{n,m}(p_1,p_2)=\sum_{\beta \in \Omega}p_1^{N_{1}(\beta)}(1-p_1)^{\sum_{s\in[m-2]}sN_{10^{s}1}(\beta)+(m-1)N_{0^{m-1}1}(\beta)}p_2^{-N_{0^{m-1}1}(\beta)} \\
={}&1+\sum_{\beta \in \Omega\setminus\{0^{n}\}}p_1^{N_{1}(\beta)}(1-p_1)^{\sum_{s\in[m-2]}sN_{10^{s}1}(\beta)}\left(\frac{(1-p_1)^{m-1}}{p_2}\right)^{N_{0^{m-1}1}(\beta)}.
\end{align*}
We now refine the sum over $\beta$ as follows. Let $k$ be the number of $1$'s in $\beta$. 
We now consider all integer tuples $(M,N)$ such that
\begin{enumerate}[label=(O\arabic*), ref=O\arabic*]
\item \label{O1} $M=\sum_{s\in[m-2]}sN_{10^{s}1}(\beta)$ equals the total number of $0$s in $\beta$ accounted for by the occurrences of the subsequences $10^{s}1$, for all $s \in [m-2]$, in $\beta$, which cannot exceed the \emph{total} number of $0$s in $\beta$, which is $n-N_{1}(\beta)=(n-k)$;
\item \label{O3} and $N_{0^{m-1}1}(\beta)=N$, the total number of $0$s in $\beta$ due to the occurrences of the subsequence $0^{m-1}1$ in $\beta$ ought to be at least $(m-1)N$. 
\end{enumerate}
\eqref{O1} yields $0\leqslant M \leqslant (n-k)$ and
that the total number of $0$s in $\beta$ due to the occurrences of the subsequence $0^{m-1}1$ in $\beta$ equals $(n-k-M)$.
Thus, if $M<(n-k)$, the subsequence $0^{m-1}1$ must occur at least once in $\beta$, or, equivalently, $N\geqslant 1$, and the total number of $0$s in $\beta$ accounted for by the occurrences of the subsequence $0^{m-1}1$ in $\beta$ would be, by \eqref{O3}, at least $(m-1)$. Consequently, $M<(n-k) \implies M+(m-1)\leqslant(n-k) \implies M\leqslant (n-k-m+1)$. This is why, on one hand, we must have $M\in\{0,1,\ldots,n-k-m+1,n-k\}$, and on the other, we must have $N=0$ if and only if $M=(n-k)$. Finally, from \eqref{O1} and \eqref{O3}, we conclude that 
$(m-1)N\leqslant(n-k-M) \implies N\leqslant \lfloor(n-k-M)/(m-1)\rfloor$.
Therefore, we have shown that $(M,N)\in\mathcal{C}_{n,k}$, where
we recall the set $\mathcal{C}_{n,k}$ defined in \eqref{M_N_choices}. 
Thus, we can write
\begin{align}
Z_{n,m}={}&1+\sum_{k=1}^{n}\sum_{(M,N)\in\mathcal{C}_{n,k}}\sum_{\beta\in\Omega}p_1^{k}q_{1}^{M}q_{2}^{N}\chi\left[N_{1}(\beta)=k,\sum_{s\in[m-2]}sN_{10^{s}1}(\beta)=M,N_{0^{m-1}1}(\beta)=N\right],\label{Z_{n,m}_step_1}
\end{align}
where we set $q_{1}=(1-p_1)$ and $q_{2}=p_2^{-1}(1-p_1)^{m-1}$. 

\sloppy The coefficient for the term $p_1^{k}q_{1}^{M}q_{2}^{N}$ in \eqref{Z_{n,m}_step_1} is equal to the number of configurations $\beta \in \Omega$ such that $N_{1}(\beta)=k$, $\sum_{s\in[m-2]}sN_{10^{s}1}(\beta)=M$ and $N_{0^{m-1}1}(\beta)=N$, for each $k \in \{0,1,\ldots,n\}$ and each pair $(M,N)\in\mathcal{C}_{n,k}$. In what follows, this is the number that we count.

We focus on the subset of $\Omega$ comprising configurations $\beta$ with $N_{1}(\beta)=k$. Let us consider $k$ copies of the symbol $1$ placed around a circle, giving rise to $k$ gaps (each gap is between two consecutively placed $1$s). Let us mark one of these $k$ copies of $1$ as \emph{special} -- for the time being, we perform the counting keeping this special $1$ fixed at, say, the cell indexed $1$, but eventually, we shall allow this special $1$ to be placed in \emph{any one} of the $n$ cells and obtain the final count. 

Next, for any $M \in \{0,1,\ldots,n-k-m+1,n-k\}$, we fix $\overline{x}=(x_{1},x_{2},\ldots,x_{m-2})\in\mathcal{T}_{M}$, where $\mathcal{T}_{M}$ is as defined in \eqref{T_{M}}. Note that, depending on the value of $M$ being considered, the set $\mathcal{T}_{M}$ could be empty. Finally, we select $N\in \mathbb{N}_{0}$ such that $(m-1)N \leqslant (n-k-M)$ and $N=0$ if and only if $M=(n-k)$. 

Reminding the reader that we keep the special $1$ fixed in its position in cell $1$,  
\begin{enumerate}
\item we now select $x_{1}$ of the $k$ gaps (each gap, as mentioned above, is flanked on either side by a copy of $1$) available to us in $\binom{k }{x_{1}}$ ways, and in each of these chosen gaps, we place precisely one $0$,
\item we then select $x_{2}$ of the remaining $(k-x_{1})$ gaps in 
$\binom{k-x_{1}}{x_{2}}$ ways, and in each of these chosen gaps, we place precisely $2$ copies of $0$,
\item continue this way selecting $x_3, \dots, x_{m-3}$,
\item finally, we select $x_{m-2}$ of the remaining $\left(k-\sum_{s=1}^{m-3}x_{s}\right)$ gaps in $\binom{k - x_1 - \cdots - x_{m-3} }{x_{m-2}}$ ways, and in each of these chosen gaps, we place precisely $(m-2)$ copies of $0$.
\end{enumerate}
It is evident that, in a configuration constructed in such a manner, there are precisely $x_{s}$ occurrences of the subsequence $10^{s}1$, for each $s \in [m-2]$.

Following this, we select $N$ out of the remaining $\left(k-\sum_{s=1}^{m-2}x_{s}\right)$ gaps in $\binom{k - x_1 - \cdots - x_{m-2}}{N}$ ways, and our task, now, is to populate each of these $N$ chosen gaps with \emph{at least} $(m-1)$ copies of $0$. The number of $0$s present in the configuration constructed so far equals $M=\sum_{s\in[m-2]}s x_{s}$ (since $\overline{x}=(x_{1},x_{2},\ldots,x_{m-2})\in\mathcal{T}_{M}$). Therefore, the number of $0$s that must be present due to the occurrences of the subsequence $0^{m-1}1$, equals $(n-k-M)$. Consequently, our task is to place a total of $(n-k-M)$ copies of the symbol $0$ in the $N$ gaps chosen at the beginning of this paragraph, making sure that each gap contains at least $(m-1)$ copies. This is equivalent to the problem of counting the number of ordered tuples $(y_{1},y_{2},\ldots,y_{N})\in\mathbb{N}^{N}$ such that $\sum_{i \in [N]} y_{i}=n-k-M$ and $y_{i}\geqslant(m-1)$ for each $i\in[N]$. Equivalently, our task is to count the number of ordered tuples $(z_{1},z_{2},\ldots,z_{N})\in\mathbb{N}_{0}^{N}$ (where we set $z_{i}=y_{i}-(m-1)$ for each $i\in[N]$) such that $\sum_{i \in [N]} z_{i}=n-k-M-(m-1)N$, and it is well-known that this number equals $\binom{n-k-M-(m-2)N-1}{ N-1}$. Once we have placed $y_{1}$ copies of $0$ in the first of the $N$ gaps chosen above, $y_{2}$ copies in the second, and so on, we have precisely $N$ occurrences of the subsequence $0^{m-1}1$ in the configuration thus constructed.

Following the construction described so far, the total number of configurations $\beta \in \Omega$, in which there are 
\begin{enumerate}[label=(P\arabic*), ref=P\arabic*]
\item \label{P1} precisely $k$ occurrences of the symbol $1$, with one of them occupying the cell indexed $1$, 
\item \label{P2} precisely $x_{s}$ occurrences of the subsequence $10^{s}1$ for each $s \in [m-2]$, with $(x_{1},x_{2},\ldots,x_{m-2})\in\mathcal{T}_{M}$, 
\item \label{P3} and precisely $N$ occurrences of the subsequence $0^{m-1}1$, 
\end{enumerate}
equals 
\begin{equation}
\binom{k}{x_{1}} \binom{k-x_{1}}{x_{2}}
\cdots
\binom{k - x_1 - \cdots - x_{m-3}}{x_{m-2}}
\binom{k - x_1 - \cdots - x_{m-2}}{N} \binom{n-k-M-(m-2)N-1}{ N-1}.\nonumber
\end{equation}
When $N=0$, the last factor gives $\binom{-1 }{ -1}=1$ as stated before \cref{thm:main_2}.
So far, this computation was accomplished keeping the special $1$ fixed in cell $1$. We now allow the special $1$ to be moved around, placing it in each of the $n$ cells at a time, which brings in a factor of $n$. However, this also gives rise to some overcounting -- specifically, keeping the special $1$ fixed in cell $i_{1}$, if we obtain a configuration $\beta$, satisfying \eqref{P1}, \eqref{P2} and \eqref{P3}, such that the remaining $(k-1)$ copies of $1$ in $\beta$ are in cells $i_{2}, i_{3}, \ldots, i_{k}$, then this same configuration $\beta$ is also counted when the special $1$ occupies any one of the cells $i_{2}, i_{3}, \ldots, i_{k}$. Thus, each such configuration is counted precisely $k$ times as we move the special $1$ around over all the cells of $[n]$. Consequently, to eliminate this overcounting, we divide by the factor of $k$, making the total count of such configurations equal to
\begin{equation}
\frac{n}{k}
\binom{k}{x_{1}} \binom{k-x_{1}}{x_{2}}
\cdots \binom{k - x_1 - \cdots - x_{m-3}}{x_{m-2}}
\binom{k - x_1 - \cdots - x_{m-2}}{N}
\binom{n-k-M-(m-2)N-1}{ N-1}.\label{intermediate_partition_function}
\end{equation}
Summing over all $\overline{x}=(x_{1},x_{2},\ldots,x_{m-2})\in\mathcal{T}_{M}$, we obtain the coefficient of $p_1^{k}q_{1}^{M}q_{2}^{N}$ in \eqref{Z_{n,m}_step_1}. Finally, summing over all $(M,N)\in\mathcal{C}_{n,k}$ and over all $k \in\{1,\ldots,n\}$ (and adding a $1$ corresponding to $k=0$, which is equivalent to $\beta=0^{n}$), we obtain, for $Z_{n,m}$, the expression given by \eqref{Z_{n,m}_final}, completing the proof. 
\end{proof}

\begin{proof}[Proof of \cref{thm:limiting_prob_cell_occupied_by_1}]
This proof for general values of $m$ follows a very similar line of argument as that of \cref{thm:main_2}, where we again set $q_{1}=(1-p_1)$ and $q_{2}=p_2^{-1}(1-p_1)^{m-1}$. 
To find $\Prob[\eta_{1}=1]$, where $\eta=(\eta_{i} \mid i\in[n])$ follows the limiting distribution $\pi$ of the \pca{}, we have to sum the probabilities (under $\pi$) of all those configurations $\beta=(\beta_{i} \mid i\in[n])$ such that $\beta_{1}=1$. Recalling $\mathcal{C}_{n,k}$ as defined in \eqref{M_N_choices} and the form of $\pi(\beta)$ stated in \eqref{limiting_distribution}, we can write
\begin{align}
\Prob[\eta_{1}=1]={}&\sum_{\substack{\beta\in\Omega \\ \beta_{1}=1}} \pi(\beta) \nonumber\\
=& \sum_{k=1}^{n}\sum_{(M,N)\in\mathcal{C}_{n,k}}
\sum_{\substack{\beta\in\Omega \\ \beta_{1}=1}}
\chi\left[N_{1}(\beta)=k,\sum_{s\in[m-2]}s N_{10^{s}1}(\beta)=M, 
N_{0^{m-1}1}(\beta)=N \right]\pi(\beta)\nonumber\\
={}&\frac{1}{Z_{n,m}} \sum_{k=1}^{n} \sum_{(M,N)\in\mathcal{C}_{n,k}}
p_{1}^{k} q_{1}^{M}q_{2}^{N} \Bigg|\Bigg\{\beta\in\Omega \;\Bigg|\; \beta_{1}=1, 
N_{1}(\beta)=k,\sum_{s\in[m-2]}s N_{10^{s}1}(\beta)=M,\nonumber\\
& \qquad \qquad \qquad N_{0^{m-1}1}(\beta)=N \Bigg\}\Bigg|,\label{limiting_probability_cell_1_occupied_by_1}
\end{align}
\sloppy so that our task reduces to counting all those configurations $\beta\in\Omega$ such that $N_{1}(\beta)=k$, $\sum_{s\in[m-2]}s N_{10^{s}1}(\beta)=M$, $N_{0^{m-1}1}(\beta)=N$, and the cell $1$ is occupied by the symbol $1$ under $\beta$. Recalling $\mathcal{T}_{M}$ as defined in \eqref{T_{M}}, and fixing $(x_{1},x_{2},\ldots,x_{m-2})\in\mathcal{T}_{M}$, the number of configurations $\beta\in\Omega$ such that $N_{1}(\beta)=k$, $N_{10^{s}1}(\beta)=x_{s}$ for each $s\in[m-2]$, $N_{0^{m-1}1}(\beta)=N$ and $\beta_{1}=1$, can be counted in exactly the same way as we accomplished the counting needed in the proof of \cref{thm:main_2}, with one notable exception: to ensure $\beta_{1}=1$, we keep the `special $1$' fixed at site $1$, which is why the factor of $n/k$ is absent from \eqref{intermediate_partition_function}. 
Finally, incorporating these findings (i.e.\ the expression obtained from \eqref{intermediate_partition_function} after omission of the factor $n/k$) into \eqref{limiting_probability_cell_1_occupied_by_1}, we obtain:
\begin{align}
Z_{n,m}\Prob[\eta_{1}=1]={}&\sum_{k=1}^{n}\sum_{(M,N)\in\mathcal{C}_{n,k}}\sum_{(x_{1},x_{2},\ldots,x_{m-2})\in\mathcal{T}_{M}}\binom{k}{x_{1}}\binom{k-x_{1}}{x_{2}}\cdots
\binom{k - x_1 - \cdots - x_{m-3}}{x_{m-2}} \nonumber\\
& \times \binom{k - x_1 - \cdots - x_{m-2}}{N}
\binom{n-k-M-(m-2)N-1}{N-1}
p_{1}^{k} q_{1}^{M}q_{2}^{N},\nonumber
\end{align}
completing the proof. 
\end{proof}

\section{Neighbourhood size $m = 2$}
\label{sec:m=2}

We begin with a proof of the detailed balance condition.
	
\begin{proof}[Proof of \cref{prop:detailed m=2}]
It is easy to check that the stationary distribution in \eqref{limiting_distribution} satisfies the detailed balance condition if $n = 2$.
From now on, we assume $n > 2$. 

An important observation, following from \eqref{rule_1}, \eqref{rule_2} and \eqref{rule_3}, is that there is a transition from $\alpha$ to $\beta$ 
if and only if there is no position $i$ such that $\alpha_i = \beta_i = 1$. 
We shall use this fact repeatedly.
Therefore, for any pair of distinct configurations $\alpha, \beta \in \Omega$, there is a transition from $\alpha$ to $\beta$ if and only if there is a transition from $\beta$ to $\alpha$. 

First, suppose $p_1 + p_2 = 1$. Then, from \eqref{rule_1}, \eqref{rule_2} and \eqref{rule_3}, we see that every $0$ changes to a $1$ with probability $p_1$ without any dependence on the next site, and 
every $1$ changes to a $0$ with probability $1$. This is a classical chain which is known (and can easily be verified) to be reversible.

For the converse, we first simplify \eqref{limiting_distribution} for $m = 2$ by
introducing some notation. For any $\alpha \in \Omega$ and $a, b \in \{0, 1\}$, 
let $\Pos_a(\alpha)$ denote the set of positions of the letter $a$ in $\alpha$, and let $\Pos_{ab}(\alpha)$ denote the set of positions of the letter $a$ in $\alpha$ such that the subsequent letter is $b$. In other words,
\begin{equation}
\Pos_{a}(\alpha)=\left\{i\in[n] \mid \alpha_{i}=a\right\} \text{ and } \Pos_{ab}(\alpha)=\left\{i\in[n] \mid \alpha_{i}=a,\alpha_{i+1}=b\right\},\nonumber
\end{equation}
where the sum $(i+1)$ in the subscript is considered modulo $n$. Then we have
\begin{equation}
\label{pim2}
\pi(\alpha) = \frac{p_1^{|\Pos_{1}(\alpha)|}}{Z_{n,2}}
\left( \frac{1 - p_1}{p_2} \right)^{|\Pos_{01}(\alpha)|}.
\end{equation}
We now write down the formula for transition probabilities using this notation.
Suppose $\alpha, \beta \in \Omega$ and we are looking at $\Prob[\alpha \to \beta]$.
Let $\Pos_{a_1 b_1, a_2 b_2}(\alpha, \beta) \equiv  
\Pos_{a_1 b_1, a_2 b_2} = \Pos_{a_1 b_1}(\alpha) \cap \Pos_{a_2 b_2}(\beta)$
for $a_1, b_1, a_2, b_2 \in \{0, 1\}$. 
From \eqref{rule_1}, we get a factor of $p_1$ every time there is a consecutive pair $00$ in $\alpha$ and there is a $1$ in $\beta$ at the location of the first $0$ in that pair. 
This contributes a factor of $p_1^{|\Pos_{00,10}| + |\Pos_{00,11}|}$ towards this transition. Arguing similarly from \eqref{rule_2}, we get
\begin{equation}
\label{a to b}
\Prob[\alpha \to \beta] = p_1^{|\Pos_{00,10}| + |\Pos_{00,11}|}
(1-p_1)^{|\Pos_{00,00}| + |\Pos_{00,01}|}
p_2^{|\Pos_{01,00}|}
(1-p_2)^{|\Pos_{01,10}|},
\end{equation}
where we have used the fact that $|\Pos_{01,01}| = |\Pos_{01,11}| = \emptyset$, since $\alpha$ and $\beta$ cannot have $1$'s at the same location as argued above.
By the same argument, 
\begin{equation}
\label{b to a}
\Prob[\beta \to \alpha] = p_1^{|\Pos_{10,00}| + |\Pos_{11,00}|}
(1-p_1)^{|\Pos_{00,00}| + |\Pos_{01,00}|}
p_2^{|\Pos_{00,01}|}
(1-p_2)^{|\Pos_{10,01}|}.
\end{equation}
The ratio of the transitions \eqref{a to b} and \eqref{b to a} is
\begin{align}
\label{ratio trans}
\frac{\Prob[\alpha \to \beta]}{\Prob[\beta \to \alpha]} =& 
p_1^{|\Pos_{00,10}| + |\Pos_{00,11}| - |\Pos_{10,00}| - |\Pos_{11,00}|} \cr
& \times (1-p_2)^{|\Pos_{01,10}| - |\Pos_{10,01}|}
\left( \frac{1-p_1}{p_2} \right)^{|\Pos_{00,01}| - |\Pos_{01,00}|}.
\end{align}

Now that we have written down all the quantities in \eqref{detailed balance}, we
collect the various factors in the ratio $(\pi(\alpha) \Prob[\alpha \to \beta])/ (\pi(\beta) \Prob[\beta \to \alpha])$ using \eqref{pim2} and \eqref{ratio trans}. First, the power of $p_1$ is
\[
|\Pos_{1}(\alpha)| - |\Pos_{1}(\beta)| + |\Pos_{00,10}| + |\Pos_{00,11}| - |\Pos_{10,00}| - |\Pos_{11,00}|.
\]
\sloppy Now, notice that 
$|\Pos_{00,10}| + |\Pos_{00,11}| = |\Pos_{00}(\alpha) \cap \Pos_1(\beta)|$ 
and similarly, 
$|\Pos_{10,00}| + |\Pos_{11,00}| = |\Pos_{1}(\alpha) \cap \Pos_{00}(\beta)|$.
Therefore, this power is 
\[
|\Pos_{1}(\alpha)| - |\Pos_{1}(\beta)| +
|\Pos_{00}(\alpha) \cap \Pos_1(\beta)| - |\Pos_{1}(\alpha) \cap \Pos_{00}(\beta)|.
\]
Combining terms and using the fact that $\beta$ cannot have a $1$ at any location in $\Pos_1(\alpha)$, it is clear that 
\[
|\Pos_{1}(\alpha)| - |\Pos_{1}(\alpha) \cap \Pos_{00}(\beta)|
= |\Pos_{1}(\alpha) \cap \Pos_{01}(\beta)|.
\]
But this last term is equal to $|\Pos_{10}(\alpha) \cap \Pos_{01}(\beta)| = 
|\Pos_{10,01}|$, again by the same argument. Therefore, the power of $p_1$ at the end of the day turns out to be
\begin{equation}
\label{p1 power}
|\Pos_{10,01}| - |\Pos_{01,10}|.
\end{equation}

Happily, the factors of $1 - p_1$ and $p_2$ occur together in both \eqref{pim2} and \eqref{ratio trans}. Therefore, we can combine them and see that the power of $(1 - p_1)/p_2$ in the ratio is
\[
|\Pos_{00,01}| - |\Pos_{01,00}| + |\Pos_{01}(\alpha)| - |\Pos_{01}(\beta)|.
\]
But
\begin{multline*}
|\Pos_{01}(\alpha)| - |\Pos_{01,00}|
= |\Pos_{01}(\alpha)| - |\Pos_{01}(\alpha) \cap \Pos_{00}(\beta)| \\
= |\Pos_{01}(\alpha) \cap \Pos_{10}(\beta)| = |\Pos_{01,10}|,
\end{multline*}
where we have again used the fact that $\alpha$ and $\beta$ cannot have $1$'s at the same position. Repeating this argument for $\beta$, we get that the power 
of $(1 - p_1)/p_2$  is $\Pos_{01,10} - |\Pos_{10,01}|$, which is the negative of \eqref{p1 power}.

Finally, the power of $1 - p_2$ in the ratio is exactly given in \eqref{ratio trans} and that is also the negative of \eqref{p1 power}. Combining all these calculations, we have that
\[
\frac{\pi(\alpha) \Prob[\alpha \to \beta]}{\pi(\beta) \Prob[\beta \to \alpha]}
=
\left(\frac{p_1 p_2}{(1 - p_1)(1 - p_2)} \right)^{|\Pos_{10,01}| -|\Pos_{01,10}|}.
\]
For detailed balance to hold, the factor on the right hand should be $1$, which is satisfied if and only if $p_1 + p_2 = 1$.
\end{proof}

We now prove the formula for the generating function of the partition function.

\begin{proof}[Proof of \cref{thm:main_3}]
We shall prove here that the generating function corresponding to the sequence $\{Z_{n,2}\}$, with $Z_{n,2} \equiv Z_{n,2}(p_1,p_2)$, satisfies the identity given by \eqref{partition_function_gf}. From \cref{thm:main_2}, we have
\begin{equation}\label{generating_function_general}
Z_{n,2} = 
1 + \sum_{k=1}^{n} \sum_{j = 0}^{n-k} \frac{n}{k} \binom{k}{j}
\binom{n-k-1}{j-1} p_1^{k} q_{2}^{j},
\end{equation} 
for $n \geqslant 1$, where we recall, from the proof of \cref{thm:main_2} in \cref{sec:proof_main_2}, that $q_{2}=p_2^{-1}(1-p_1)$.
In \eqref{generating_function_general}, we focus on the latter term, i.e.\
\begin{equation}
\sum_{n = 1}^\infty \sum_{k=1}^{n} \sum_{j = 0}^{n-k} \frac{n}{k} \binom{k}{j}
\binom{n-k-1}{j-1} p_1^{k} q_{2}^{j} x^n.
\end{equation}
Since the limits are natural, we can move the $n$-sum inside, to obtain
\begin{equation}
\sum_{k=1}^{\infty} \frac{p_1^{k}}k \sum_{j = 0}^{\infty} \binom{k}{j} q_{2}^{j}
\sum_{n = j+k}^\infty n \binom{n-k-1}{j-1}  x^n.\label{generating_function_n_sum_inside}
\end{equation}
In order to perform the $n$-sum, we begin by noting that for any $j\in\mathbb{N}$, applying Newton's binomial expansion, we get
\begin{align}
(1-x)^{-j}={}&\sum_{i=0}^{\infty}{-j\choose i}(-x)^{i}=\sum_{i=0}^{\infty}\frac{(-j)(-j-1)\ldots(-j-i+1)}{i!}(-x)^{i}\nonumber\\
={}&\sum_{i=0}^{\infty}\frac{j(j+1)\ldots(j+i-1)}{i!}x^{i}=\sum_{i=0}^{\infty}\binom{j+i-1}{j-1}x^{i}.\label{Newton}
\end{align}
The $n$-sum of \eqref{generating_function_n_sum_inside} can now be performed as follows:
\begin{align}
{}&\sum_{n=j+k}^{\infty}n\binom{n-k-1}{j-1}x^{n}=x^{j+k}\sum_{t=0}^{\infty}(t+j+k)\binom{t+j-1}{j-1}x^{t}\nonumber\\ 
={}&x^{j+k}\sum_{t=0}^{\infty}(t+1)\binom{t+j-1}{j-1}x^{t}+x^{j+k}\sum_{t=0}^{\infty}(j+k-1)\binom{t+j-1}{j-1}x^{t}\nonumber\\ 
={}&x^{j+k}\frac{d}{dx}\left(\sum_{t=0}^{\infty}\binom{t+j-1}{j-1}x^{t+1}\right)+(j+k-1)x^{j+k}\sum_{t=0}^{\infty}\binom{t+j-1}{j-1}x^{t}\nonumber\\
={}&x^{j+k}\frac{d}{dx}\left\{x(1-x)^{-j}\right\}+(j+k-1)x^{j+k}(1-x)^{-j}, \quad \text{by \eqref{Newton};}\nonumber\\
={}&x^{j+k}(1-x)^{-j}+j x^{j+k+1}(1-x)^{-j-1}+(j+k-1)x^{j+k}(1-x)^{-j}\nonumber\\
={}&x^{j+k}(1-x)^{-j-1}\left\{(1-x)+j x+(j+k-1)(1-x)\right\}=\frac{x^{j+k} (j + k - k x)}{(1 - x)^{j+1}}.\label{n-sum}
\end{align}
Substituting the final expression obtained from \eqref{n-sum} into \eqref{generating_function_n_sum_inside}, we now perform the $j$-sum as follows:
\begin{align}
{}&\sum_{j = 0}^{\infty} \frac{x^{j+k} (j + k - k x)}{(1 - x)^{j+1}}
\binom{k}{j} q_{2}^{j} = x^{k}\sum_{j=0}^{k}\binom{k}{j}\frac{j x^{j} q_{2}^{j}}{(1-x)^{j+1}}+k x^{k}\sum_{j=0}^{k}\binom{k}{j}\frac{x^{j}q_{2}^{j}}{(1-x)^{j}}\nonumber\\
={}&\frac{k q_{2} x^{k+1}}{(1-x)^{2}}\sum_{j=1}^{k}\binom{k-1}{j-1}\left(\frac{x q_{2}}{1-x}\right)^{j-1}+k x^{k}\sum_{j=0}^{k}\binom{k}{j}\left(\frac{x q_{2}}{1-x}\right)^{j}\nonumber\\
={}&\frac{k q_{2} x^{k+1}}{(1-x)^{2}}\left(1+\frac{x q_{2}}{1-x}\right)^{k-1}+k x^{k}\left(1+\frac{x q_{2}}{1-x}\right)^{k}\nonumber\\
={}&\frac{k x^k (1-x+q_{2}x)^{k-1} (1 - 2x + 2q_{2}x + x^2 - q_{2}x^2)}{(1-x)^{k+1}}.\label{j-sum}
\end{align}
Finally, substituting the final expression from \eqref{j-sum} into \eqref{generating_function_n_sum_inside}, we perform the $k$-sum, which actually boils down to a geometric series, to obtain:
\begin{equation}
\sum_{k=1}^{\infty} \frac{p_1^{k}}k 
\frac{k x^k (1-x+q_{2}x)^{k-1} (1 - 2x + 2q_{2}x + x^2 - q_{2}x^2)}{(1-x)^{k+1}}
=
\frac{p_1x - 2p_1x^2 + 2p_1q_{2}x + p_1x^3 - p_1q_{2}x^3}
{(1 - x)(1 - x - p_1x + p_1x^2 - p_1q_{2}x^2)}.
\end{equation}
We now add the first term from $Z_{n,2}$ for $n \geqslant 1$, as well as $Z_{0,2}$, to
get
\begin{equation}
2 + \frac{x}{1 - x} + 
\frac{p_1x - 2p_1x^2 + 2p_1q_{2}x + p_1x^3 - p_1q_{2}x^3}
{(1 - x)(1 - x - p_1x + p_1x^2 - p_1q_{2}x^2)}
= \frac{2 - x - p_1x}{1 - x (1 + p_1) - x^2 p_1(q_{2} - 1)},
\end{equation}
as desired, thus completing the proof.
\end{proof}

\begin{proof}[Proof of \cref{thm:density m=2}]
Again, writing $q_{2}=p_{2}^{-1}(1-p_{1})$, the expression in \eqref{eq:limiting_prob_cell_occupied_by_1} boils down to
\begin{align}
Z_{n,2}\Prob[\eta_{1}=1]={}&\sum_{k=1}^{n}\sum_{j = 0}^{n-k}\binom{k}{j}
\binom{n-k-1}{j-1} p_1^{k} q_{2}^{j},\nonumber
\end{align}
so that the generating function corresponding to this sequence is given by:
\begin{align}
{}&\sum_{n\in\mathbb{N}}\left(\sum_{k=1}^{n}\sum_{j=0}^{n-k}\binom{k}{j}
\binom{n-k-1}{j-1} p_1^{k} q_{2}^{j}\right)x^{n}=\sum_{k=1}^{\infty}p_{1}^{k}\sum_{j=0}^{k}{k \choose j}q_{2}^{j}\sum_{n=k+j}^{\infty}{n-k-1 \choose j-1}x^{n}\nonumber\\
={}&\sum_{k=1}^{\infty}p_{1}^{k}\sum_{j=0}^{k}{k \choose j}q_{2}^{j}\sum_{i=0}^{\infty}{i+j-1 \choose j-1}x^{i+j+k}, \quad \text{setting } i=n-j-k;\nonumber\\
={}&\sum_{k=1}^{\infty}p_{1}^{k}\sum_{j=0}^{k}{k \choose j}q_{2}^{j}x^{j+k}\sum_{i=0}^{\infty}{i+j-1 \choose j-1}x^{i}=\sum_{k=1}^{\infty}p_{1}^{k}\sum_{j=0}^{k}{k \choose j}q_{2}^{j}x^{j+k}(1-x)^{-j}\nonumber\\
={}&\sum_{k=1}^{\infty}p_{1}^{k}x^{k}\sum_{j=0}^{k}{k \choose j}\left(\frac{q_{2}x}{1-x}\right)^{j}=\sum_{k=1}^{\infty}p_{1}^{k}x^{k}\left(1+\frac{q_{2}x}{1-x}\right)^{k}=\frac{p_{1}x(1-x+q_{2}x)}{(1-x)(1-p_{1}x)-p_{1}q_{2}x^{2}},\nonumber
\end{align}
which completes the proof.
\end{proof}

We now calculate the phase diagram of the \pca{} when the size of the neighbourhood-marking set equals $m = 2$. To begin with, we look at the asymptotics of the partition function.
From standard asymptotic analysis~\cite[Chapter V]{flajolet-sedgewick-2009}, it is well known that the growth of a sequence whose generating function is rational is exponential, and the growth rate is determined by the pole nearest to the origin.
In our problem, there are two cases to be considered, depending on whether $q_{2} = 1$ or not. 

The first, when $q_{2} = 1$, is particularly easy. In this case, the denominator in \eqref{partition_function_gf} is linear, and is given by $1 - (1+p_1)x$. 
Therefore, by a standard geometric series calculation, we get
\begin{equation}
\label{pf q=1}
Z_{n,2} = (1+2p_1) (1+p_1)^{n-1},
\end{equation}
which can also be obtained directly from \cref{thm:main_2}. In the second case, i.e.\ when $q_{2} \neq 1$, the poles of $\sum Z_{n,2} x^n$ occur at
\begin{equation}
x_{\pm} = \frac{-(1+p_1) \pm \sqrt{(1-p_1)^2 + 4 p_1q_{2}}}{2 p_1(q_{2}-1)}.
\end{equation}
Since each of $p_1$ and $q_{2}$ is strictly positive, the roots are both real and distinct. Thus, we need to compare the absolute values of $x_+$ and $x_-$ over the range $p_1 \in (0,1)$ and 
$q_{2} \in (0, \infty) \setminus \{1\}$. One can check that $|x_+| < |x_-|$ in this range. 
Now write
\begin{equation}
\sum Z_{n,2} x^n = \frac{\mathcal{Z}_0(x)}{(x_+ - x) (x_- - x)}
= \frac{\mathcal{Z}_0(x)}{x_+ (x_- - x)} \left( 1 - \frac{x}{x_+} \right)^{-1},
\end{equation}
where
\begin{equation}
\mathcal{Z}_0(x) = \frac{x + p_1x - 2}{p_1(q_{2}-1)}.
\end{equation}
Expanding the binomial coefficient and plugging in $x = x_+$ gives the asymptotic formula,
\begin{equation}
Z_{n,2} \sim \frac{\mathcal{Z}_0(x_+)}{x_+ (x_- - x_+)} x_+^{-n}.
\end{equation}

\section*{Acknowledgements}
\sloppy A.\ A.\ was partially supported by SERB Core grant CRG/2021/001592 as well as the DST FIST program - 2021 [TPN - 700661]. 
M.\ P.\ was partially supported by SERB Core grant CRG/2021/006785.

\bibliographystyle{alpha}
\bibliography{PCA_ring_bib}

\end{document}